\documentclass[11pt]{article}
\usepackage{latexsym,amsthm,amsmath,amssymb,url,geometry}
\usepackage{thm-restate}
\usepackage{tikz}
\usetikzlibrary{decorations.pathreplacing}

\newcommand{\AY}[1]{{#1}}
\newcommand{\GG}[1]{{#1}}

\newcommand{\ModII}[2]{\mbox{$#1 \equiv #2$ (mod 2)}}
\newcommand{\2}{\vspace{0.15cm}}
\geometry{margin=3cm}

\newtheorem{theorem}{Theorem}
\newtheorem{lemma}{Lemma}

\newtheorem{proposition}{Proposition}
\newtheorem{definition}{Definition}

\newcommand{\SpL}{\hspace{0.1cm}}

\bibliographystyle{plainurl}

\begin{document}

\title{Perfect Forests in Graphs and Their Extensions} 

\author{Gregory Gutin$^{1}${ }
Anders Yeo$^{2,3}$
\\$^{1}$ Department of Computer Science\\ Royal Holloway, University of London\\ Egham, United Kingdom g.gutin@rhul.ac.uk \\
$^{2}$ Department of Mathematics and Computer Science\\ University of Southern Denmark, Denmark {andersyeo@gmail.com} 
\\$^{3}$ Department of Pure and Applied Mathematics\\
University of Johannesburg, South Africa}
%


\maketitle

\begin{abstract}
Let $G$ be a graph on $n$ vertices. For $i\in \{0,1\}$ and a connected graph $G$, a spanning forest $F$ of $G$ is called
an  $i$-perfect forest if every tree in $F$ is an induced subgraph of $G$ and exactly $i$ vertices of $F$ have even degree (including zero). 
A  $i$-perfect forest of $G$ is proper if it has no vertices of degree zero. Scott (2001) showed that
every connected graph with even number of vertices contains a (proper) 0-perfect forest. 
We prove that one can find a 0-perfect forest with minimum number of edges in polynomial time, but it is NP-hard to obtain a 0-perfect forest with maximum number of edges.
\GG{Moreover, we show that to decide whether $G$ has a 0-perfect forest with at least $|V(G)|/2+k$ edges, where $k$ is the parameter, is W[1]-hard. }
We also prove that for a prescribed edge $e$ of $G,$ it is NP-hard to obtain a 0-perfect forest containing $e,$ but one can \AY{decide if there exists
 a 0-perfect forest not containing $e$ in polynomial time.}
It is easy to see that every graph with odd number of vertices has a 1-perfect forest. It is not the case for proper 1-perfect forests.
We give a characterization of when a connected graph has a proper 1-perfect forest.
\end{abstract}

\newpage

\section{Introduction}

In this paper all graphs are finite, undirected, have no parallel edges or loops. We use standard terminology and notation, see e.g. \cite{Diestel}.
The number of vertices (edges, respectively) of a graph $G$ is called its {\em order} ({\em size}, respectively). The degree of a vertex $x$ in a graph $G$ is denoted by $d_G(x).$
A vertex $x$ of a graph $G$ is {a} {\em cut-vertex}  if $G-x$ has more connected components than $G.$
A maximal connected subgraph of a graph $G$ without a cut-vertex is called a {\em block}. 
Thus, every block of $G$ is either a maximal 2-connected subgraph or a bridge (including its vertices) or an isolated vertex, implying that
a block of odd order in a connected graph of order at least 3, must be a maximal 2-connected subgraph.

A spanning forest $F$ of $G$ is called a {\em semiperfect forest} if every tree of $F$ is an induced subgraph of $G.$
Let $G$ be a graph and let $f\colon V(G) \rightarrow \{0,1\}$ be a function such that $\sum_{v \in V(G)} f(v)$ is even (we will call such a function {\em even-sum}). 
A subgraph $H$ in $G$ where  \ModII{d_H(x)}{f(x)} for all $x \in V(G),$ is called an $f$-{\em parity subgraph}. Note that the requirement that $f$ is even-sum is 
necessary as otherwise an $f$-parity subgraph does not exist. 
An $f$-parity subgraph $H$ of $G$ is called an {\em $f$-parity perfect forest} if $H$ is a semiperfect forest. 

For $i\in \{0,1\}$ and a graph $G$, an $f$-parity perfect forest is called an {\em $i$-perfect forest} if $f(x)=1$ for all vertices of $G$ for $i=0$, and for all vertices of $G$ apart from one for $i=1.$
An  $i$-perfect forest of $G$ is {\em proper} if it has no vertices of degree zero. 
Note that every 0-perfect forest (called a perfect forest in \cite{CaroLZ,Gut} and a pseudo-matching in \cite{SW})
is proper. {For examples of 0-perfect and 1-perfect forests, see Figures \ref{0perfect} and \ref{1perfect}.}

\begin{figure}[hbtp]
\begin{center}
\tikzstyle{vertexL}=[circle,draw, top color=gray!5, bottom color=gray!30,minimum size=12pt, scale=0.6, inner sep=0.1pt]
\begin{tikzpicture}[scale=0.45]
\draw (3,-0.5) node {{\footnotesize (a): $G$}};
\node (x1) at (1,3) [vertexL] {};
\node (x2) at (3,3) [vertexL] {};
\node (x3) at (5,3) [vertexL] {};
\node (x4) at (1,1) [vertexL] {};
\node (x5) at (3,1) [vertexL] {};
\node (x6) at (5,1) [vertexL] {};
\draw [line width=0.02cm] (x1) to (x2);
\draw [line width=0.02cm] (x1) to (x5);
\draw [line width=0.02cm] (x2) to (x3);
\draw [line width=0.02cm] (x2) to (x4);
\draw [line width=0.02cm] (x3) to (x5);
\draw [line width=0.02cm] (x4) to (x5);
\draw [line width=0.02cm] (x5) to (x6);
\end{tikzpicture} \hspace{1cm}
\begin{tikzpicture}[scale=0.45]
\draw (3,-0.5) node {{\footnotesize (b): A $0$-perfect forest of $G$}};
\node (x1) at (1,3) [vertexL] {};
\node (x2) at (3,3) [vertexL] {};
\node (x3) at (5,3) [vertexL] {};
\node (x4) at (1,1) [vertexL] {};
\node (x5) at (3,1) [vertexL] {};
\node (x6) at (5,1) [vertexL] {};
\draw [line width=0.02cm] (x1) to (x2);
\draw [line width=0.02cm] (x2) to (x3);
\draw [line width=0.02cm] (x2) to (x4);
\draw [line width=0.02cm] (x5) to (x6);
\end{tikzpicture}
\end{center}

\caption{A graph $G$ is shown in (a) and a $0$-perfect forest of $G$ is shown in (b) 
(as all degrees are odd and the trees are induced in $G$).} \label{0perfect}
\end{figure}
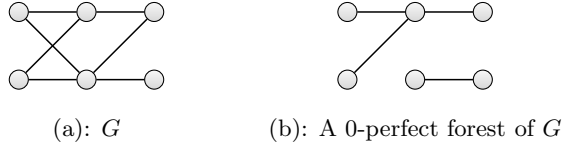

\begin{figure}[hbtp]
\begin{center}
\tikzstyle{vertexL}=[circle,draw, top color=gray!5, bottom color=gray!30,minimum size=12pt, scale=0.6, inner sep=0.1pt]
\begin{tikzpicture}[scale=0.45]
\draw (3,-0.5) node {{\footnotesize (a): $H$}};
\node (x1) at (1,3) [vertexL] {};
\node (x2) at (3,3) [vertexL] {};
\node (x3) at (5,3) [vertexL] {};
\node (x4) at (1,1) [vertexL] {};
\node (x5) at (3,1) [vertexL] {};
\node (x6) at (5,1) [vertexL] {};
\node (x7) at (6.5,2) [vertexL] {};
\draw [line width=0.02cm] (x1) to [out=30, in=150] (x3);
\draw [line width=0.02cm] (x1) to (x2);
\draw [line width=0.02cm] (x1) to (x4);
\draw [line width=0.02cm] (x2) to (x4);
\draw [line width=0.02cm] (x2) to (x5);
\draw [line width=0.02cm] (x2) to (x6);
\draw [line width=0.02cm] (x3) to (x6);
\draw [line width=0.02cm] (x3) to (x7);
\draw [line width=0.02cm] (x6) to (x7);
\end{tikzpicture} \hspace{0.5cm}
\begin{tikzpicture}[scale=0.45]
\draw (3,-0.5) node {{\footnotesize (b): A $1$-perfect forest of $H$}};
\node (x1) at (1,3) [vertexL] {};
\node (x2) at (3,3) [vertexL] {};
\node (x3) at (5,3) [vertexL] {};
\node (x4) at (1,1) [vertexL] {};
\node (x5) at (3,1) [vertexL] {};
\node (x6) at (5,1) [vertexL] {};
\node (x7) at (6.5,2) [vertexL] {};
\draw [line width=0.02cm] (x2) to (x4);
\draw [line width=0.02cm] (x2) to (x5);
\draw [line width=0.02cm] (x2) to (x6);
\draw [line width=0.02cm] (x3) to (x7);
\end{tikzpicture} \hspace{0.2cm}
\begin{tikzpicture}[scale=0.45]
\draw (3,-0.5) node {{\footnotesize (c): A proper $1$-perfect forest of $H$}};
\node (x1) at (1,3) [vertexL] {};
\node (x2) at (3,3) [vertexL] {};
\node (x3) at (5,3) [vertexL] {};
\node (x4) at (1,1) [vertexL] {};
\node (x5) at (3,1) [vertexL] {};
\node (x6) at (5,1) [vertexL] {};
\node (x7) at (6.5,2) [vertexL] {};
\draw [line width=0.02cm] (x1) to [out=30, in=150] (x3);
\draw [line width=0.02cm] (x2) to (x4);
\draw [line width=0.02cm] (x2) to (x5);
\draw [line width=0.02cm] (x2) to (x6);
\draw [line width=0.02cm] (x3) to (x7);
\end{tikzpicture} 
\end{center}
\caption{The graph $H$ is shown in (a), a (non-proper) $1$-perfect forest of $H$ is shown in (b), and a proper 
$1$-perfect forest of $H$ is shown in (c).} \label{1perfect}
\end{figure}
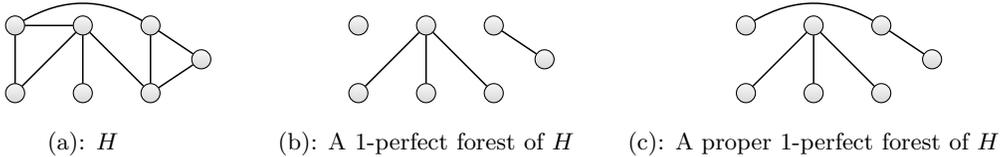

Clearly, every connected graph with a 0-perfect forest is of even order. Scott \cite{Sco2001} proved that somewhat surprisingly the opposite implication is also true.
\begin{theorem} \label{ThmKnown}
Every connected graph of even order contains a 0-perfect forest. 
\end{theorem}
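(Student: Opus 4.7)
My plan is to pick a spanning subgraph $F$ of $G$ in which every vertex has odd degree and which has the minimum number of edges among such subgraphs, and then verify directly that $F$ is a $0$-perfect forest. Existence of at least one spanning subgraph with all degrees odd is easy to arrange: partition $V(G)$ into $|V(G)|/2$ unordered pairs (possible because $|V(G)|$ is even), choose any path in $G$ between the two vertices of each pair, and let $F_0$ be the symmetric difference of the edge sets of these paths. At each vertex $x$, the degree parity in $F_0$ equals the number of chosen paths having $x$ as an endpoint modulo $2$, which is $1$. So $F_0$ exists and I may minimise over all such subgraphs to obtain $F$.

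Next, I would show that $F$ is a forest. If some cycle $C$ were contained in $F$, then every vertex of $C$ is incident with exactly two edges of $E(C)$, so $F - E(C)$ has the same degree parities as $F$ (hence still all odd), but strictly fewer edges. This contradicts the minimality of $F$.

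Then I would show that each tree of $F$ is an induced subgraph of $G$. Suppose some tree $T$ of $F$ contains vertices $u,v$ with $uv \in E(G) \setminus E(F)$. Let $P$ be the unique $u$-$v$ path in $T$, so $C = P + uv$ is a cycle of $G$ of length at least $3$ (as $G$ has no parallel edges). Set $F' = F \triangle E(C)$: we delete the $|E(C)|-1 \geq 2$ edges of $P$ from $F$ and add the single edge $uv$, giving $|E(F')| = |E(F)| + 2 - |E(C)| \leq |E(F)| - 1$. Each vertex of $C$ has its degree altered by an even amount and all other degrees are untouched, so $F'$ is again all-odd, contradicting minimality. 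Hence every tree of $F$ is induced, and $F$ is a $0$-perfect forest.

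I do not anticipate any genuine obstacle: the whole argument is an exchange/minimality argument whose crux, in both cycle-breaking steps, is that cycles in a simple graph have length at least $3$. The most subtle point is really the existence of an all-odd spanning subgraph; the pairing-of-vertices construction above is a concrete avatar of the standard $T$-join fact that, for a connected graph, the $\mathbb{F}_2$-image of the incidence map is exactly the even-sum subspace of $\mathbb{F}_2^{V(G)}$.
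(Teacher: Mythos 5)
Your proposal is correct and takes essentially the same approach as the paper: the paper first constructs an $f$-parity subgraph by taking a symmetric difference of paths pairing up the odd-prescription vertices (Theorem~\ref{th:fpar}), then applies a minimality/exchange argument (Lemma~\ref{LemI}) that removes cycles and, for any non-induced tree, swaps a chord $uv$ for the tree path between $u$ and $v$. The only cosmetic difference is that the paper states these steps for a general even-sum $f\colon V(G)\to\{0,1\}$ and then sets $f\equiv 1$, whereas you argue the $f\equiv 1$ case directly.
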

The proof of Theorem \ref{ThmKnown} in  \cite{Sco2001} is  graph-theoretical and relatively long. A short proof using basic linear algebra is obtained in \cite{Gut} and two short graph-theoretical proofs are given in \cite{CaroLZ}. 
All the proofs of Theorem \ref{ThmKnown} are constructive and yield polynomial algorithms for finding 0-perfect forests.
Intuitively, it is clear that a 0-perfect forest can provide a useful structure in a graph and, in particular, this notion was used by Sharan and Wigderson \cite{SW} to prove that the perfect matching problem for bipartite cubic graphs belongs to the complexity class ${\cal NC}$. {Semiperfect forests were used in the proofs of three theorems in \cite{GY2021lower}.}
Gutin and Yeo \cite{GutYeo} studied extensions of a 0-perfect forest to directed graphs.

Since a 0-perfect forest is a generalization of a {matching, it} is natural to study the following two problems for a connected graph $G$ of even order $n$: 

(1) Find a 0-perfect forest of $G$ of minimum size. (Clearly, the minimum size is $n/2$ if and only if $G$ has a perfect matching.)

(2) Find a 0-perfect forest of $G$ of maximum size. (This is of interest in matching-like edge-decompositions of $G.$)

\2

The following theorem  solves the first problem. 

\begin{theorem} \label{CorI}
In  polynomial time, we can find a $0$-perfect forest of minimum size.
\end{theorem}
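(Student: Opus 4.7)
My plan is to show that the minimum size of a 0-perfect forest of $G$ equals the minimum size of any $f$-parity subgraph of $G$ for $f\equiv 1$ (i.e., any spanning subgraph in which every vertex has odd degree), and then to appeal to the classical polynomial-time algorithm for the minimum $T$-join problem with $T=V(G)$ to compute the latter.

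One direction is immediate: every 0-perfect forest is, by definition, an $f$-parity subgraph with $f\equiv 1$, so the minimum size of a 0-perfect forest is at least that of a minimum such $f$-parity subgraph. The nontrivial direction is that any minimum $f$-parity subgraph is already a 0-perfect forest. Let $J$ be a minimum $f$-parity subgraph of $G$ for $f\equiv 1$; such a $J$ exists because $|V(G)|$ is even (by Theorem \ref{ThmKnown} a 0-perfect forest of $G$ exists, and it is itself such an $f$-parity subgraph).

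The argument has two steps, both via symmetric difference with a cycle, which preserves all vertex-degree parities. First, $J$ is acyclic: if $C$ were a cycle contained in $J$, then $J\setminus E(C)$ would still be an $f$-parity subgraph for $f\equiv 1$ (each vertex of $C$ loses exactly two incident edges) of strictly smaller size, contradicting minimality. Second, every tree of $J$ is an induced subgraph of $G$: if some tree $T$ of $J$ had a chord $uv \in E(G)\setminus E(T)$, let $P$ be the $u$-$v$ path in $T$ and let $C$ denote the edge set $E(P)\cup\{uv\}$ of the resulting cycle of length at least $3$. Since $P\subseteq E(J)$ and $uv\notin E(J)$ (otherwise $T$ would contain the cycle $C$), we have $|J\cap C|=|C|-1$. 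Then $J\triangle C$ is again an $f$-parity subgraph with $f\equiv 1$, and
\[
|J\triangle C| \;=\; |J|+|C|-2|J\cap C| \;=\; |J|-|C|+2 \;\le\; |J|-1,
\]
again contradicting minimality. Hence $J$ is a 0-perfect forest of $G$, and the two minima coincide.

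Finally, a minimum $f$-parity subgraph of $G$ for $f\equiv 1$ is precisely a minimum $T$-join with $T=V(G)$, which can be computed in polynomial time, e.g., by reducing to a minimum-weight perfect matching in the complete graph on $V(G)$ whose edge weights are the shortest-path distances in $G$ and invoking Edmonds' algorithm. The main obstacle in the plan is the second symmetric-difference step, which upgrades the conclusion from ``forest'' to \emph{semiperfect forest}; the acyclicity step is a one-line degree count and the reduction to matching is textbook.
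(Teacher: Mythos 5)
Your proof is correct, and the two halves of your argument align with the paper in different ways. The structural step—showing that a minimum-size spanning subgraph in which every vertex has odd degree is automatically acyclic and, moreover, has induced components—is exactly the paper's Lemma~\ref{LemI}, proved by the same two exchange arguments (drop a cycle; replace a tree path by a chord). Your chord argument is phrased via symmetric difference with the cycle $E(P)\cup\{uv\}$, which makes the size drop $|J|-|C|+2\le |J|-1$ explicit; the paper simply says ``remove the path and add the chord,'' which is the same operation.

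Where you genuinely diverge is the algorithmic step. The paper (in the proof of Theorem~\ref{ThmII}) builds a blow-up auxiliary graph: each vertex $v_i$ of $G$ is replaced by a cluster $X_i$ of size $n$ or $n-1$ according to the parity prescribed by $f(v_i)$, a zero-weight matching is placed inside each cluster, inter-cluster unit-weight edges encode $E(G)$, and a minimum-weight perfect matching in this gadget graph is decoded into a minimum $f$-parity subgraph. You instead observe that a minimum $f$-parity subgraph for $f\equiv 1$ is exactly a minimum $T$-join with $T=V(G)$, and invoke the classical Edmonds--Johnson reduction: all-pairs shortest-path distances, minimum-weight perfect matching on the complete graph over $T$, then symmetric difference of the corresponding shortest paths. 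Both routes are polynomial and both extend without change to the general even-sum $f$ (your approach with $T=\{v:f(v)=1\}$, the paper's by construction), so the difference is mainly one of packaging: the $T$-join route leans on a standard off-the-shelf result, while the paper's gadget is self-contained and avoids the shortest-path preprocessing at the cost of a larger auxiliary graph. Your argument is a legitimate and arguably cleaner alternative to the paper's reduction.
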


Theorem \ref{CorI} follows immediately from the next theorem by letting $f(x)=1$ for all $x \in V(G)$. Theorem  \ref{ThmII} shows usefulness of extending
Problem 1 to $f$-parity perfect forests. Theorem  \ref{ThmII} is proved in Section \ref{sec:min}.

\begin{restatable}{theorem}{thmm} 
\label{ThmII}
Let $G$ be a connected graph and let $f\colon V(G) \rightarrow \{0,1\}$ be an even-sum function.
We can in polynomial time find an $f$-parity perfect forest $H$ in $G$,  such that  \ModII{d_H(x)}{f(x)} for all $x \in V(G)$ 
{and $|E(H)|$ is minimized.}
\end{restatable}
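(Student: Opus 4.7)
The plan is to reduce Theorem \ref{ThmII} to the classical minimum $T$-join problem. Set $T := \{v \in V(G) : f(v) = 1\}$; the even-sum hypothesis on $f$ guarantees $|T|$ is even, and since $G$ is connected a $T$-join exists. By the Edmonds--Johnson theorem, a minimum-cardinality $T$-join $J \subseteq E(G)$ (an edge set with $d_J(v)$ odd iff $v \in T$) can be computed in polynomial time, by reduction to a minimum-weight perfect matching on $T$ with shortest-path distances as weights. Let $H$ be the spanning subgraph of $G$ with $E(H) = J$. By construction $d_H(v) \equiv f(v) \pmod 2$ for every $v$, and $|E(H)|$ is minimum among \emph{all} $f$-parity subgraphs of $G$, which is at least as strong as being minimum among $f$-parity perfect forests.

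The crux is to show that this extremal $H$ is automatically an $f$-parity perfect forest, which makes it optimal within that narrower class as well. Acyclicity is immediate: if $J$ contained a cycle $C$, then $J \setminus E(C)$ would still be a $T$-join (removing a cycle changes no degree modulo $2$) but strictly smaller, contradicting the choice of $J$. For the semiperfect (induced-tree) property, suppose some tree $T'$ of $H$ contains two vertices $u, v$ with $uv \in E(G) \setminus E(T')$; note $uv \notin J$, because otherwise $u$ and $v$ would be adjacent in $T'$. Let $P$ be the unique $u$-$v$ path in $T'$, so $|E(P)| \geq 2$, and consider $J' := J \triangle (E(P) \cup \{uv\})$. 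At $u$ and $v$, exactly two edges of $E(P) \cup \{uv\}$ are incident; at every internal vertex of $P$, exactly two edges of $E(P)$ are incident; and at every other vertex, none. Hence all degree parities are preserved, so $J'$ is again a $T$-join, yet $|J'| = |J| - |E(P)| + 1 < |J|$, contradicting the minimality of $J$.

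I expect the only non-routine point to be the symmetric-difference swap establishing the induced-tree property; the rest is a direct invocation of the polynomial algorithm for minimum $T$-join together with the elementary cycle-deletion argument for acyclicity. Isolated vertices in $H$, if any, are precisely those $v$ with $f(v) = 0$, which is consistent with the semiperfect-forest definition (single-vertex trees are trivially induced), so no separate treatment is needed.
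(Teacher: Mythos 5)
Your proof is correct, and it shares the structural half of its argument with the paper: your ``acyclic, and each tree is induced, else swap a chord for a tree-path'' argument is precisely the paper's Lemma~\ref{LemI} (minimum-size $f$-parity subgraph is an $f$-parity perfect forest). Where you diverge is in the algorithmic half. The paper does not invoke the minimum $T$-join machinery by name; instead it builds a bespoke auxiliary graph $H$ by blowing each vertex $v_i$ up into a class $X_i$ of size $n-1$ or $n$ (with parity matching $f(v_i)$), adding weight-$0$ internal matchings and weight-$1$ cross-edges, proving that $H$ has a perfect matching, and establishing a two-way correspondence (Claim~A) between low-weight perfect matchings of $H$ and small parity-correct edge multisets of $G$; minimum-weight perfect matching on $H$ then does the work. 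You instead recognise the problem as exactly minimum $T$-join with $T = f^{-1}(1)$ and cite the Edmonds--Johnson reduction (shortest-path metric on $T$, then minimum-weight perfect matching on $|T|$ vertices). Both ultimately call Edmonds' blossom algorithm; your route is shorter, appeals to a standard result, and uses a much smaller auxiliary graph, while the paper's construction is self-contained and avoids naming $T$-joins, at the cost of the vertex-blowup gadget and the correspondence claim. One small remark: you could equally have cited the paper's own Lemma~\ref{LemI} verbatim for the structural step rather than reproving it, but reproving it costs nothing.
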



As the following theorem  shows, the second problem cannot be solved in polynomial time unless {\sf P}={\sf NP}.  

\begin{theorem}\label{thm:NP} 
It is {\sf NP}-hard to find a $0$-perfect forest of maximum size. 
\end{theorem}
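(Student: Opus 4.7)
The plan is to show NP-hardness by a polynomial-time reduction from a classical NP-complete problem, most naturally 3-SAT, to the decision version of the Maximum 0-Perfect Forest problem. A useful preliminary observation is that a $0$-perfect forest on $n$ vertices with $c$ tree-components has exactly $n-c$ edges, so maximizing $|E(F)|$ is equivalent to minimizing the number of induced odd-degree trees; in the extreme case, size $n-1$ is attained precisely when $G$ admits a spanning induced tree all of whose vertices have odd degree.

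Given a 3-SAT formula $\phi$, I would construct a connected graph $G_\phi$ of even order together with a threshold $K$. The graph is built from local gadgets whose feasible interactions with any $0$-perfect forest are tightly controlled by the degree-parity and induced-subgraph constraints. For each variable $x_i$ I would place a variable gadget supporting exactly two equally sized local configurations, corresponding to $x_i=$true and $x_i=$false. For each clause $C_j$ I would place a clause gadget linked to the literal gadgets of $C_j$ in such a way that the gadget can be spanned in a $0$-perfect forest at its maximum local edge count only when at least one of the three incident literals is in its ``true'' configuration; otherwise some clause-internal vertex is either left of even degree, forced to be an isolated component, or forced to lose an edge to avoid creating a chord. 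Consistency between adjacent gadgets is enforced automatically by the requirement that every shared vertex have odd total degree in $F$.

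The threshold $K$ is defined as the total number of edges produced by the globally ``all satisfied'' configuration. A satisfying assignment of $\phi$ then immediately yields a $0$-perfect forest of size $K$, and conversely any $0$-perfect forest of size at least $K$ must, by the local rigidity of the clause gadgets, place each clause into its satisfied local state and hence reveal a satisfying assignment. The main obstacle, and the technical heart of the proof, is the design of the clause gadget: it must simultaneously realize the logical OR of three literals, respect the parity constraint at every internal vertex, and forbid chords within every tree of the forest, without admitting ``cheating'' configurations that meet the edge budget without any true literal. I would handle this through a dedicated gadget lemma that enumerates the restrictions of an arbitrary $0$-perfect forest to the gadget's vertex set and checks that each such restriction either respects the intended semantics or strictly loses at least one edge relative to the satisfied configuration.
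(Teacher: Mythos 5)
Your high-level framing is sound: you correctly observe that a $0$-perfect forest on $n$ vertices with $c$ components has $n-c$ edges, so maximizing size amounts to minimizing the number of induced odd-degree trees, and you correctly recognize that the task is to design variable and clause gadgets for a SAT-like reduction. However, the proposal stops exactly where the proof actually begins. You explicitly identify the clause gadget as ``the main obstacle, and the technical heart of the proof,'' and then defer it to an unproved ``gadget lemma'' that merely asserts the gadget has the desired enumerated behavior. No gadget is given, so there is nothing to verify, and the proposal does not constitute a proof.

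Beyond that omission, there is a structural concern with the choice of starting problem. You propose to reduce from ordinary 3-SAT with some generic threshold $K$ tied to an ``all satisfied'' edge count. But controlling the global component count of a $0$-perfect forest through purely local gadget arguments is delicate: ``losing an edge'' means creating an additional tree somewhere, and it is hard to charge that loss to a specific unsatisfied clause without the gadgets interacting in uncontrolled ways. The paper avoids this by reducing instead from \emph{NAE}-3-SAT and setting the target to exactly $n-2$ edges, which forces the forest to consist of precisely two trees $T_1$ and $T_2$ that partition $V(G)$. The two-tree structure then plays the role of the NAE $2$-coloring: each variable gadget (a near-complete graph on $6$ vertices with two non-edges, plus pendant leaves) is forced by the triangle-free and chord-free constraints to split $3$--$3$ across the two trees in one of a few rigid ways, and each clause vertex $c_j$ must sit in whichever tree gives it exactly one neighbour, which is possible precisely when the clause has both a true and a false literal. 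This is why NAE-3-SAT, not 3-SAT, is the right source problem for this construction; adapting your sketch to plain 3-SAT would require a genuinely different gadget design that you would still need to invent and verify. As written, the proposal is a plausible plan of attack but not a proof.
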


Let $n=|V(G)|.$ Theorem \ref{thm:NP}  follows from the next result proved in Section \ref{sec:max}. Theorem \ref{ThmI} is optimal in the following sense. 
The problem of finding a $0$-perfect forest of size at least $n-1$ is polynomial-time solvable because $G$ has a $0$-perfect forest of size at least $n-1$ 
if and only if $G$ is a tree in which every vertex is of odd degree. 

\begin{theorem} \label{ThmI}
It is {\sf NP}-hard to decide whether a connected graph contains a $0$-perfect forest with at least $n-2$ edges.
\end{theorem}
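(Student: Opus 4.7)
I plan to prove Theorem~\ref{ThmI} by a polynomial-time reduction from a known NP-complete problem. A useful first step is to reformulate the target. Since every tree in a 0-perfect forest has all vertices of odd degree and therefore even order, a 0-perfect forest on $n$ vertices with at least $n-2$ edges has at most two trees. A 0-perfect forest with $n-1$ edges is a spanning tree which, being induced in $G$, forces $G$ itself to be a tree in which every vertex has odd degree; this case is detected in polynomial time, so the hardness must sit in the case of exactly $n-2$ edges. That case is equivalent to asking whether $V(G)$ admits a partition $A \sqcup B$ (both parts of even size) such that $G[A]$ and $G[B]$ are induced trees and every vertex has odd degree within its own part. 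I will call such a partition a \emph{valid bipartition} of $G$.

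My first choice of source problem is 3-SAT. Given a formula $\phi$ I would build a connected graph $G_\phi$ from (i) variable gadgets that encode a binary choice by which of two designated vertices is placed in $A$ and which in $B$; (ii) clause gadgets that can be completed to a valid bipartition if and only if the associated clause is satisfied by the encoded assignment; and (iii) connector edges that propagate a variable's chosen value consistently across its occurrences. Small pendant-like subgraphs attached at the interfaces between gadgets would serve as parity correctors that absorb any leftover odd-degree deficit without creating cycles inside a part. The correctness of the reduction would then follow from a local analysis of each gadget: satisfying assignments yield valid bipartitions, and any valid bipartition restricts in each gadget to one of the allowed choices.

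The main obstacle is the tight interaction between three local constraints a valid bipartition imposes at every vertex: the part it lies in must be connected, acyclic, and confer it odd degree. The odd-degree requirement alone is a parity constraint on the cut $\delta(A,B)$, equivalent to saying that this cut is a $T$-join of $G$ for $T=\{v \in V(G) : d_G(v) \text{ is even}\}$, and combining it with the demand that each side be a single induced tree makes the structure quite rigid. Designing gadgets whose only valid bipartitions correspond to the intended truth assignments, and in particular ruling out spurious partitions that shortcut through a gadget via unintended tree routings, is the technical heart of the argument and is likely to require a careful case analysis on how the two trees can interleave through each gadget. If 3-SAT gadgetry turns out to be awkward, a natural fallback would be to reduce from a more structural graph problem such as Hamiltonian Path in a restricted class, where induced paths and parity conditions combine more organically.
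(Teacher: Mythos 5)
Your reformulation of the $n-2$ case as a bipartition $V(G)=A\sqcup B$ with $G[A]$ and $G[B]$ induced trees of all-odd internal degree is exactly the right target, and your observation that the $n-1$ case is polynomially detectable is correct. But the proposal stops precisely where the real work begins, and it also misses the structural observation that makes that work tractable: a bipartition $(A,B)$ is valid if and only if $(B,A)$ is, so any truth-value encoding via tree membership is invariant under globally negating the assignment. The satisfiability variant with exactly that symmetry is Not-All-Equal 3-SAT, and this is what the paper reduces from; a reduction from plain 3-SAT would require an additional symmetry-breaking device that the induced-tree/parity structure gives you no natural way to build.

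The gadgets, which you leave unconstructed, carry the rest of the argument, and your worry about ``spurious partitions that shortcut through a gadget'' is precisely the obstruction they are engineered to kill. Each variable gadget $H_i$ in the paper is $K_6$ minus the two non-adjacent edges $x_1^i y_1^i$ and $x_2^i y_2^i$; every four vertices of $H_i$ span a cycle, so each tree can take at most three of the six, and since no tree may contain a triangle the three it takes must form one of two specific induced $P_3$'s, which encodes a binary choice. Pendant edges pin every remaining vertex to the tree of its unique neighbour, rigidifying the whole partition into two chains (one per tree) threading through all the $H_i$'s, each chain being a tree with all degrees $1$ or $3$. A clause then contributes a pair of vertices adjacent to the $y$-vertices of its literals; because the chain is already a spanning tree of its part, such a clause vertex can have exactly one neighbour in whichever tree it lands in, which is exactly the NAE condition. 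Without gadgets of this rigidity your plan remains a sketch, and neither the $T$-join view of the parity constraint nor a Hamiltonian-Path fallback supplies the missing local control.
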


It is easy to show that Theorem \ref{ThmI} holds if we replace $n-2$ by $n-k$ for any integer $k\ge 2.$
Indeed, add two new vertices $x$ and $y$ to a graph $G$ as well as two edges $xy$ and $yu,$ where $u$ is any vertex in $G.$ The resulting graph is denoted by $G'.$
Observe that there is a 0-perfect forest of size $|V(G)|-k$  in $G$ if and only if there is a 0-perfect forest of size $|V(G')|-(k+1)$ in $G'.$ 

Since the problem of finding a $0$-perfect forest of maximum size is {\sf NP}-hard, 
\GG{
it is {natural} to study parameterized complexity of the problem; we will provide a short introduction to parameterized algorithms and complexity in Section \ref{sec:W1}, for excellent introductions
to the area, see e.g. \cite{Cygan+,downey2013,FlumG06}. Since $n/2$ is a tight lower bound and $n-1$ is a tight upper bound for the maximum size, it is natural to consider
  below-tight-upper-bound and above-tight-lower-bound parameterizations of the problem.\footnote{Such parameterizations were studied for many graph-theoretical and constraint satisfaction problems, see e.g. \cite{Bang+,CJM,GY,LokshtanovNRRS14,LSSZ}.} 
 In other words, we can ask whether there is a $0$-perfect forest of size at least $n-k$ ($n/2+k$, respectively), where $k$ is the parameter.
Theorem \ref{ThmI} shows that the parameterization {$n-k$  is \sf para-NP}-complete. 
In the conference version \cite{GY2021perfect} of this paper, we asked whether the parameterization {$n/2+k$} is fixed-parameter tractable. 
The following theorem proved in Section \ref{sec:W1} implies 
that this is highly unlikely as there are a number of reasons to believe that no  {\sf W}[1]-hard parameterized problem is fixed-parameter tractable (see e.g. the discussion on the Exponential Time Hypothesis in Section \ref{sec:W1}). Henceforth we will call the problem of deciding whether a connected graph $G$ contains a $0$-perfect forest with at least $n/2+k$ edges the {\sc Perfect Forest Above Perfect Matching} problem.  

\begin{theorem} \label{ThmW}
{\sc Perfect Forest Above Perfect Matching}  is {\sf W}[1]-hard.
\end{theorem}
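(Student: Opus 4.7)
My plan is to establish W[1]-hardness of \textsc{Perfect Forest Above Perfect Matching} by a parameterized reduction from a classical W[1]-hard problem such as $k$-\textsc{Multicolored Clique}: given a graph $H$ with color classes $V_1,\dots,V_k$, decide whether some clique uses exactly one vertex from each class. The goal is to construct, in polynomial time, a connected graph $G$ together with a parameter $k'=f(k)$ (for some computable $f$) so that $G$ has a $0$-perfect forest with at least $n/2+k'$ edges if and only if $H$ has a multicolored clique. A convenient first observation is that a spanning forest with $t$ trees has $n-t$ edges, and each tree of a $0$-perfect forest has even order at least $2$, so the target can be rewritten as: partition $V(G)$ into induced trees with all odd degrees whose total excess $\sum_i(|V(T_i)|/2-1)$ is at least $k'$. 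In effect, we need to produce a small number of large odd-degree induced trees on top of a near-perfect matching.

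The construction would combine three ingredients. First, a padding layer of auxiliary matched pairs would guarantee that vertices not drawn into a large tree can always be completed to a perfect matching, contributing zero excess. Second, selection gadgets --- one per color class $V_i$ --- would attach to a central apex so that exactly one representative per class may be ``promoted'' into the big tree. Third, edge-verification gadgets would exploit the induced-subgraph requirement on trees to prevent the coexistence in the big tree of representatives corresponding to a non-edge of $H$. An apex vertex (or a twin pair of apices, to manage parities) would center this large tree, and carefully placed pendants would turn any degree that would otherwise be even into an odd one, so the resulting tree encodes precisely a multicolored clique.

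The forward direction is straightforward: from a multicolored clique in $H$, one explicitly assembles a single large induced tree achieving the target excess $k'$, complemented by matching edges on the padding. The reverse direction, which is the main obstacle, is to show that any $0$-perfect forest of size at least $n/2+k'$ must contain a tree whose structure forces a valid multicolored clique, ruling out ``cheating'' via unexpected tree shapes. This is where the rigidity of induced trees with all odd degrees must be exploited: any interior vertex of degree $2$ is forbidden, which sharply limits how such a large tree can look. A careful case analysis --- tracking which apex-type vertices can lie in a large tree, which padding matching edges must remain intact, and how the edge-verification gadgets interact with the induced-subgraph constraint --- is expected to be the most delicate step, and the gadget sizes and the choice of $k'$ would be tuned precisely to block all alternative configurations.
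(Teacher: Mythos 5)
Your proposal is a plan rather than a proof: no concrete gadget is specified, no target parameter $k'$ is fixed, and you explicitly defer the soundness direction (``ruling out cheating'') to an unperformed case analysis. That case analysis is precisely where such reductions live or die, so as it stands there is a genuine gap, not a completed argument.

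The missing insight is a structural forcing mechanism, and the paper's choice here is quite different from what you sketch. The paper reduces from \textsc{Independent Set}, not \textsc{Multicolored Clique}, and the crucial device is a pendant-edge layer: starting from an auxiliary graph $H_2^G$ they attach a pendant vertex to \emph{every} vertex except two designated endpoints $v_1^1$ and $v_k^2$, producing $H_3^G$. In any $0$-perfect forest $F$ of $H_3^G$, every pendant edge is forced into $F$ (pendants have degree one). Deleting those pendant edges leaves a subgraph of $H_2^G$ in which all vertices have even degree except $v_1^1$ and $v_k^2$; an induced forest with exactly two odd-degree vertices is an induced path between them plus isolated vertices. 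Thus the entire ``excess above $n/2$'' collapses to the length of a single induced $(v_1^1,v_k^2)$-path in $H_2^G$, and the paper then shows, via a two-layered blow-up of $G$ with subdivision vertices on non-edges, that a long induced path of the right length exists iff $G$ has a size-$k$ independent set. Your apex-and-large-tree plan does not have this rigidity: an induced tree with all odd degrees and a high-degree apex can assume a great variety of shapes, and the induced constraint alone is not obviously strong enough to block degenerate configurations without extra machinery you have not supplied. You would at minimum need an analogue of the pendant trick (or a counting argument on branching vertices) to pin down the shape of any large tree; without it, the reverse direction cannot be carried out, and the reduction remains unverified.

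You also never say how the graph is made connected (required by the problem statement) nor how the parities of the padding region are arranged so that every ``unused'' vertex really can be completed to a matched pair. These are fixable, but they are part of the same unfinished work. Your opening reformulation (excess $=\sum_i(|V(T_i)|/2-1)$) is correct and a reasonable starting point, but it does not by itself bridge the gap.
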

}


Here is another pair of natural problems on 0-perfect forests. They both are clearly polynomial-time solvable when restricted to perfect matchings. For a graph $G$ of even order and an edge $e$ in $G$,

($1'$) find a 0-perfect forest containing $e$; 

($2'$) find a 0-perfect forest not containing $e.$

\2

For Problem $1',$ we prove the following result in Section \ref{sec:ince}.

\begin{restatable}{theorem}{thsix} 
\label{ZeroPerfectForestContainingEdge}
The following problem is {\sf NP}-hard. Given a connected graph $G$ and an edge $e \in E(G),$ decide whether 
 $G$ has a $0$-perfect forest containing $e$.
\end{restatable}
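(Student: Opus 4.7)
The plan is to give a polynomial-time many-one reduction to the problem of the theorem from an already-established NP-hard problem. The most convenient source, keeping the hardness chain internal to the paper, is Theorem~\ref{ThmI}: deciding whether a connected graph admits a $0$-perfect forest with at least $n-2$ edges. A reduction from a classical hard problem such as 3-SAT or 3-dimensional matching via explicit variable/clause gadgets is an equally natural fallback if the internal reduction turns out awkward.

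Given an instance $H$ of the source problem, I would construct a connected graph $G$ by attaching a small gadget to $H$ through a newly introduced distinguished edge $e$. The gadget should be designed so that the three simultaneous requirements---spanning forest, every tree induced, every vertex of odd degree---propagate from $e$ into $H$ and force $F \cap E(H)$, in any $0$-perfect forest $F$ of $G$ containing $e$, to be a $0$-perfect forest of $H$ with the prescribed number of edges (or of the prescribed local shape). Conversely, any target-size $0$-perfect forest of $H$ should extend canonically through the gadget into a $0$-perfect forest of $G$ that contains $e$. The proof then proceeds in the standard way: describe the construction; verify the forward direction by an explicit extension of a large $0$-perfect forest of $H$ through the gadget; verify the reverse direction by a finite case analysis on how the tree containing $e$ can enter $H$ from the gadget side; and finally check that the reduction has polynomial size.

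The main obstacle is the interaction between the induced-subgraph condition and the odd-degree condition. The induced requirement forces every edge of $H$ between two vertices lying in the same tree of $F$ to be present in that tree, which tends to destroy the parities and collapse the degree bookkeeping; the odd-degree requirement at the endpoints of $e$ then has to be restored by the gadget without creating new forbidden chords. Building a gadget whose only consistent $0$-perfect forest completions correspond bijectively to yes-instances of the source problem is therefore delicate, and I expect the bulk of the work to consist of showing that each alternative way of extending the gadget either violates the induced-tree property, fails the odd-degree parity, or faithfully encodes a valid solution on $H$. I would likely need auxiliary pendant or bridge vertices in the gadget to absorb parity deficits locally so that they do not spill over into $H$.
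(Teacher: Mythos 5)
Your proposal is a high-level plan rather than a proof, and the central mechanism is missing. The paper's actual argument does not reduce from the large-$0$-perfect-forest problem of Theorem~\ref{ThmI} at all. Instead it first establishes a separate intermediate result, Theorem~\ref{inducedCycle}: it is {\sf NP}-hard to decide whether a graph has an \emph{induced cycle through two prescribed edges} $e_1$ and $e_2$ (proved by a 3-SAT reduction with explicit variable and clause gadgets). The reduction to $0$-perfect-forest-containing-an-edge is then short and elegant: from an instance $(G,e_1,e_2)$ with $e_1=u_1v_1$ and $e_2=u_2v_2$, delete $e_1$ and attach a pendant edge to every vertex of $V(G)\setminus\{u_1,v_1\}$, obtaining $H$, and ask for a $0$-perfect forest of $H$ containing $e_2$. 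Every pendant edge is forced into any $0$-perfect forest, and once those are removed what remains must be an induced subgraph in which $u_1$ and $v_1$ have odd degree and everything else even degree; being a forest with exactly two odd-degree vertices, that leftover is necessarily a single induced $(u_1,v_1)$-path, which together with $e_1$ yields the wanted induced cycle, and conversely. Notice this is not of the form ``$F\cap E(H)$ is a $0$-perfect forest of $H$ of prescribed size'': the extracted object is an induced path, a different structure.

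This is precisely where your sketch stops short. You correctly flag the interaction between the induced-subgraph condition and the odd-degree condition as the crux, and you correctly guess that pendant vertices can be used to manipulate parities, but you leave the construction open and aim at the wrong source problem. There is no obvious gadget that makes ``contains $e$'' encode ``has at least $n-2$ edges,'' because the size constraint is global while edge containment is local; the paper sidesteps this entirely by choosing a source problem (induced cycle through two edges) whose structure maps cleanly onto $0$-perfect forests once the ambient parities are flipped. You also have not supplied, or even sketched, the 3-SAT gadgets that underlie the hardness of that intermediate problem. As written, the proposal identifies the difficulty but does not resolve it, and it points toward a reduction route that is unlikely to go through.
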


For Problem $2',$ we have the next result, which  follows immediately from Theorem~\ref{FparityForestAvoidingEdge}, by letting $f(x)=1$ for
all $x$ in $G$. 
Theorem~\ref{FparityForestAvoidingEdge} again demonstrates usefulness of $f$-parity perfect forests. It is proved in Section \ref{sec:noe}.

\begin{theorem} \label{ZeroPerfectForestAvoidingEdge}
Given a graph $G$ and an edge $e \in E(G)$
we can in polynomial time decide whether $G$ has a $0$-perfect forest not containing $e$.
\end{theorem}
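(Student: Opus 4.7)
The plan is to reduce the problem to Theorem~\ref{ThmII} via a structural reformulation and a case split on whether $e = uv$ is a bridge of $G$. The core observation is that an $f$-parity perfect forest $H$ of $G$ avoids $e$ if and only if $u$ and $v$ lie in different trees of $H$: if both were in the same tree $T$, then the induced-tree condition combined with $uv \in E(G)$ would force $e \in T \subseteq H$, contradicting $e \notin H$; conversely, if they lie in different trees then $e$ cannot be in $H$. Thus the task becomes deciding whether $G$ admits an $f$-parity perfect forest in which $u$ and $v$ are separated.

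If $e$ is a bridge, let $A \ni u$ and $B \ni v$ denote the vertex sets of the two components of $G - e$. Any spanning forest of $G$ avoiding $e$ is contained in $G[A] \cup G[B]$, so $u$ and $v$ automatically end up in different trees. Hence the answer is YES if and only if each of $G[A]$ and $G[B]$ (which are connected) admits an $f$-parity perfect forest, which by Theorem~\ref{ThmII} applied to each component reduces to the polynomial-time check that $f|_A$ is even-sum (equivalent to $f|_B$ being even-sum, since $f$ is even-sum on $V(G)$), with the witnesses themselves produced by Theorem~\ref{ThmII}.

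When $e$ is not a bridge, $G - e$ is connected, and I first apply Theorem~\ref{ThmII} to $(G - e, f)$ to obtain an $f$-parity perfect forest $H$ of $G - e$. Any tree of $H$ missing at least one of $u, v$ is induced in $G - e$ if and only if it is induced in $G$, so if $u, v$ lie in different trees of $H$ then $H$ itself is the desired witness and we output YES. The subtle sub-case is when $u$ and $v$ lie in the same tree of the $H$ returned by Theorem~\ref{ThmII}: then another $f$-parity perfect forest of $G - e$, separating $u$ from $v$, may or may not exist.

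To handle the sub-case, my plan is to build an auxiliary connected graph $G^*$ by deleting $e$ and attaching a small gadget at $u$ and $v$ (for instance, a fresh vertex $z$ adjacent to both $u$ and $v$, with $f^*(z) = 0$ and $f$ shifted by $1$ at $u$ and $v$ so that the parity constraints force exactly the configurations $d_{H^*}(z) \in \{0, 2\}$), and apply Theorem~\ref{ThmII} to $(G^*, f^*)$. A case analysis on which configuration is realized then either retracts to an $f$-parity perfect forest of $G$ separating $u$ from $v$, or, after a constant number of invocations with slightly different gadgets/parities to cover all cases, certifies non-existence. The main obstacle — and the crux of the argument — is correctness of the retraction: one must verify that the induced-tree condition in $G^*$ translates to the induced-tree condition in $G$ after removing the gadget, and rule out the pitfall in which a naive gadget merges the ``$u$-side'' and ``$v$-side'' of the would-be-separated tree through the gadget and thereby introduces chord violations from edges of $G - e$ that run between the two halves. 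Designing the gadget, and bounding the number of Theorem~\ref{ThmII} calls so that it suffices to detect the answer, is the key technical step.
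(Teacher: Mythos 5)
Your opening reformulation is correct and matches the paper's implicit viewpoint: since every tree of the forest is induced in $G$, a $0$-perfect (more generally $f$-parity perfect) forest avoids $e=uv$ exactly when $u$ and $v$ lie in different trees. The bridge case is also handled correctly. However, the case you yourself identify as ``the crux'' --- $e$ not a bridge and the minimum-size forest from Theorem~\ref{ThmII} failing to separate $u$ from $v$ --- is not actually resolved, and this is where the entire difficulty of the theorem lives. Theorem~\ref{ThmII} is an unconstrained minimization; it has no mechanism for forcing $u$ and $v$ into different trees, so ``run it and check'' only ever certifies YES, never NO. The specific gadget you sketch (a new vertex $z$ adjacent to $u$ and $v$, with parities shifted at $u,v$) fails in the forward direction for exactly the reason you flag but do not overcome: if a separating forest exists with $u\in T_1$ and $v\in T_2$, gluing $T_1$ and $T_2$ through $z$ generally does not yield an induced tree of the gadget graph, because $G-e$ may contain many edges between $V(T_1)$ and $V(T_2)$ (distinct trees of a semiperfect forest are only internally induced). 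Moreover, even when the parity at $z$ is $0$, the minimizer may simply leave $z$ isolated, which tells you nothing. No correctness argument, and no bound on the number of oracle calls, is supplied, so the proposal does not constitute a proof.

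For comparison, the paper avoids optimization altogether and proves a clean structural characterization (Claims~A--C of Section~\ref{sec:noe}): first reduce along cut-vertices to the $2$-connected piece containing $e$; then an $f$-parity perfect forest avoiding $e$ exists unless $f(u)=f(v)=1$ and every other vertex has $f$-value $0$. The existence direction is constructive: pick a third vertex $w$ with $f(w)=1$, build a spanning tree containing a $(u,v)$-path through $w$ with $d_T(w)=2$, split the tree at $w$ into two connected parts of even $f$-sum separating $u$ from $v$, and apply Theorem~\ref{th:fpar} to each part. Something of this structural nature (or a genuinely correct gadget with a proof of both directions) is what is missing from your argument.
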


\begin{theorem} \label{FparityForestAvoidingEdge}
The following problem is polynomial time solvable. Given a graph $G$, an edge $e \in E(G)$ and an even-sum 
function $f\colon V(G) \rightarrow \{0,1\}$, decide whether
 $G$ has an $f$-parity perfect forest not containing $e$.
\end{theorem}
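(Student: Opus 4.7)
The plan is to reduce to Theorem~\ref{ThmII}. The first step is a key equivalence: $G$ has an $f$-parity perfect forest avoiding $e=uv$ if and only if $G-e$ has an $f$-parity subgraph $H$ (not necessarily a forest) with $u$ and $v$ in different connected components of $(V(G),H)$. The forward direction is immediate from the induced-tree requirement, since if $u$ and $v$ shared a tree then the edge $e$ would have to lie in that tree, contradicting the hypothesis. For the reverse direction, one starts from any such $H$, removes cycles one at a time (this preserves parity and each vertex's connected component), and then iteratively performs the standard chord-swap inside each tree (which exchanges a tree-path for a chord edge and preserves the partition of vertices into trees); the result is an $f$-parity perfect forest of $G-e$ with $u,v$ in different trees, hence an $f$-parity perfect forest of $G$ avoiding $e$.

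Next, I reformulate the question. The existence of such an $H$ is equivalent to the existence of a vertex partition $(S,\bar S)$ of $V(G)$ with $u\in S$ and $v\in\bar S$ such that every connected component of $(G-e)[S]$ and of $(G-e)[\bar S]$ has an even number of vertices of $T:=\{w:f(w)=1\}$. Taking $S$ to be the $u$-component of $H$ gives the forward direction; applying Theorem~\ref{ThmII} componentwise to $(G-e)[S]\sqcup (G-e)[\bar S]$ gives the reverse. A handshake computation also shows that for any $f$-parity subgraph $H$ of $G-e$ and any $u$-$v$ cut $E(S,\bar S)$ one has $|H\cap E(S,\bar S)|\equiv |T\cap S|\pmod 2$; hence a simple necessary condition is that some $u$-$v$ separating $S$ satisfies $|T\cap S|$ even.

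For the algorithmic decision, I would construct an auxiliary graph $G^*$ from $G-e$ by adjoining a new vertex $z$ with edges $uz$ and $vz$, together with an even-sum function $f^*$ obtained by flipping $f$ at $u$ and $v$ and setting $f^*(z)=0$. Apply Theorem~\ref{ThmII} to $(G^*,f^*)$: an output with $d(z)=2$ carries (upon deletion of $z$) a valid $f$-parity perfect forest of $G$ avoiding $e$, because the induced-tree condition in $G^*$ forces the tree through $z$ to split, when $z$ is removed, into two induced subtrees on either side. To distinguish this case from the alternative $d(z)=0$ (in which the parities at $u,v$ come out flipped), I would compare outputs of Theorem~\ref{ThmII} on $G^*$ with outputs of an auxiliary run in which $d(z)=0$ is made infeasible (for instance by deleting $z$ and checking the flipped-parity problem, then using an exchange argument).

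The hard part will be calibrating $f^*$ and the post-processing so that the induced-tree condition in $G^*$ matches exactly what is needed in $G$; naive versions of this reduction fail because induced-ness in $G^*$ is genuinely stronger than in $G$, as it forbids $(G-e)$-edges between the two halves of the $z$-tree, whereas such edges are harmless between two distinct trees of the final forest of $G$. To circumvent this, the algorithm iterates: whenever Theorem~\ref{ThmII} returns a candidate whose two halves are connected in $G-e$ by a violating edge $xy$, perform a chord-swap across $xy$ (which preserves parity and the $u,v$-separation) and re-apply Theorem~\ref{ThmII}. Termination follows from the usual size-monotonicity argument for min $T$-joins underlying Theorem~\ref{ThmII}, and correctness from the parity invariant on $u$-$v$ cuts established above, yielding a polynomial-time decision procedure.
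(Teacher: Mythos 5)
Your first two reductions are sound: the equivalence between ``$G$ has an $f$-parity perfect forest avoiding $e$'' and ``$G-e$ has an $f$-parity subgraph separating $u$ from $v$'' is correct (the chord-swap and cycle-removal only refine components, so $u$ and $v$ stay apart, and a tree of $G-e$ not containing both $u$ and $v$ is induced in $G$ iff it is induced in $G-e$), and so is the reformulation in terms of a partition $(S,\bar S)$ in which every component of $(G-e)[S]$ and $(G-e)[\bar S]$ contains an even number of vertices of $T$. But at that point you have only restated the decision problem; the genuine content of the theorem is an algorithm (or checkable criterion) for deciding whether such a partition exists, and your third step does not supply one. The gadget $G^*$ with the new vertex $z$ fails in both directions: Theorem~\ref{ThmII} gives you no control over whether $d(z)=0$ or $d(z)=2$ (and a minimum-size forest will typically prefer $d(z)=0$, whose restriction to $G$ has the wrong parities at $u$ and $v$), and conversely a valid forest of $G$ with $u,v$ in distinct trees $T_u,T_v$ need not lift to $G^*$, since $G$ may have edges between $V(T_u)$ and $V(T_v)$ that would be chords of the merged $z$-tree. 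You acknowledge both failures, but the proposed repairs --- ``an exchange argument'' to rule out $d(z)=0$, and an iterated chord-swap across violating edges with termination ``by the usual size-monotonicity argument'' --- are not specified precisely enough to verify; in particular a chord-swap across an edge joining the two halves of the $z$-tree removes the $z$-path and changes $d(z)$, so it is not even clear the invariants you need are preserved. As written, the proof of polynomial-time decidability is missing.

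For comparison, the paper avoids any global optimization gadget. It first reduces along cut-vertices to the block of $G$ containing $e$ (adjusting $f$ at the cut-vertex to keep it even-sum), and then proves a clean characterization in the $2$-connected case: the answer is ``yes'' unless $f(u)=f(v)=1$ and every other vertex has $f$-value $0$. The ``yes'' cases are constructed by deleting an endpoint of $e$ with $f$-value $0$, or by splitting a spanning tree at a third vertex $w$ with $f(w)=1$ into two even-sum pieces separating $u$ from $v$ and applying the existence theorem (Theorem~\ref{th:fpar}) to each piece --- which is exactly a constructive witness for your partition reformulation. If you want to complete your argument along your own lines, you would need to prove an analogous characterization of when your partition $(S,\bar S)$ exists; your reformulation alone does not yield a polynomial-time test.
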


\2

Since an odd order connected graph cannot have a 0-perfect forest, 
it is natural to ask whether every connected graph of odd order has a 1-perfect forest (recall that a 1-perfect forest has only one vertex of even degree).
The answer is positive and the proof is trivial. In fact, it is not hard to show the following strengthening of this observation, which will be useful in the proof of Theorem \ref{main}.  

\begin{proposition}\label{Thm1perfect}
Let $x$ be an arbitrary vertex of a connected graph $G$ of odd order. Then $G$ has a 1-perfect forest $F$ such that $d_F(x)$ is even. 
\end{proposition}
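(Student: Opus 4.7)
The plan is to reduce to Scott's theorem (Theorem~\ref{ThmKnown}) via a simple add-a-pendant construction.

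First I would form an auxiliary graph $G'$ by adding a new vertex $y$ together with the single edge $xy$. The graph $G'$ is connected because $G$ is connected and $y$ is attached to $x$, and $|V(G')| = |V(G)| + 1$ is even. By Theorem~\ref{ThmKnown}, $G'$ contains a $0$-perfect forest $F'$, i.e.\ a spanning forest of $G'$ all of whose trees are induced in $G'$ and in which every vertex has odd degree.

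Next I would observe that $y$ has exactly one neighbour in $G'$ (namely $x$), so $d_{F'}(y) \le 1$; since $d_{F'}(y)$ is odd it must equal $1$, hence the edge $xy$ lies in $F'$. Now set $F := F' - y$. Since $y$ is a leaf of the tree of $F'$ containing it, removing $y$ leaves a forest $F$ that spans $V(G)$. For every vertex $v \neq x$ we have $d_F(v) = d_{F'}(v)$, which is odd, while $d_F(x) = d_{F'}(x) - 1$ is even. Thus exactly one vertex of $F$ has even degree, namely $x$, so $F$ will be a $1$-perfect forest with $d_F(x)$ even, provided that each tree of $F$ is induced in $G$.

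The only point that needs verification is the induced condition. Trees of $F'$ not containing $y$ are unchanged and were induced in $G'$; since they do not meet $y$, they are induced in $G$ as well. For the tree $T$ of $F'$ containing $y$, consider $T' := T - y$, a tree in $F$. If $u, v \in V(T')$ with $uv \in E(G)$, then $uv \in E(G')$, so by the induced property of $T$ we have $uv \in E(T) \subseteq E(F')$, and since $u, v \neq y$ the edge $uv$ survives in $F$. Hence $T'$ is induced in $G$. I expect no real obstacle here; the delicate point is simply checking that the induced-subgraph property is preserved under deletion of the leaf $y$, which follows immediately from $y$ being a leaf with $x$ as its unique neighbour.
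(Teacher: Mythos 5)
Your proposal is correct and follows exactly the same construction as the paper: add a pendant vertex $y$ attached to $x$, apply Theorem~\ref{ThmKnown} to the resulting even-order graph, and delete $y$ from the $0$-perfect forest obtained. The paper states this in three sentences without spelling out the verification; your additional checks (that $xy$ must lie in $F'$ and that the induced property survives deletion of the leaf $y$) are correct and fill in the details the paper leaves implicit.
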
 
\begin{proof}
Create a new graph $H$ by adding a new vertex $y$ to $G$ and adding the edge $xy$.
By Theorem~\ref{ThmKnown},  $H$ has a 0-perfect forest, $F_H$. 
Deleting the vertex $y$ from $F_H$, results in  
the desired 1-perfect forest of $G$ where $x$ is the only vertex of even degree.
\end{proof}

Note that not every connected graph of odd order has a proper 1-perfect forest. For example, no complete graph of odd order has such a forest. Thus, a more interesting question with a potentially more useful answer is when a connected graph of odd order has a proper 1-perfect forest? This question is answered in the following characterization proved in Section \ref{sec:main}.

\begin{theorem}\label{main}
Let ${\cal B}$ be the set of all connected graphs where every block is a complete graph of odd order.
If $G$ is a connected graph of odd order $n\ge 3$ then $G$ contains a proper $1$-perfect forest  if and only if 
$G \not\in {\cal B}$.
\end{theorem}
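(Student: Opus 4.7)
For necessity, the plan is to induct on the number of blocks of $G \in \mathcal{B}$. In the base case of a single block $G = K_{2k+1}$, any three vertices induce a triangle, so every induced subtree has at most two vertices; a spanning induced forest with no isolated vertex therefore consists entirely of $K_2$-components, giving every vertex degree $1$ and leaving no vertex of even degree. For the inductive step, take a leaf block $B = K_{2k+1}$ with cut vertex $v$, set $S := V(B) \setminus \{v\}$, and suppose for contradiction that $F$ is a proper $1$-perfect forest of $G$. The triangle obstruction forces $|V(T) \cap V(B)| \leq 2$ for every tree $T$ of $F$. A case analysis on whether some tree of $F$ contains an edge $vs$ with $s \in S$ then yields contradictions in both situations: in the positive case the remaining $2k-1$ vertices of $S \setminus \{s\}$ would need to be perfectly matched by $K_2$'s within $K_{2k-1}$, impossible by parity; in the negative case $S$ is covered by a perfect matching of $k$ $K_2$'s within $B$, and the restriction $F|_{G-S}$ becomes a proper $1$-perfect forest of $G - S \in \mathcal{B}$ with one fewer block, contradicting the inductive hypothesis.

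For sufficiency, the plan is to argue by strong induction on $|V(G)|$, with the base $n = 3$ handled by $P_3$ (whose two edges form the desired forest). For the inductive step I split into cases on the leaf-block structure of $G$. If some leaf block $B$ has even order, with cut vertex $v$ and $S := V(B) \setminus \{v\}$, then both $G - S$ and $B$ are connected graphs of even order, so Theorem~\ref{ThmKnown} supplies proper $0$-perfect forests $F'$ of $G - S$ and $F_B$ of $B$; the union $F := F' \cup F_B$ is a forest sharing only the vertex $v$, at which $d_F(v) = d_{F'}(v) + d_{F_B}(v)$ is the sum of two positive odd numbers and hence even $\geq 2$, while every other vertex retains its odd degree. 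If every leaf block has odd order and some leaf block $B = K_{2k+1}$ is complete, then $G - S$ is connected of odd order and not in $\mathcal{B}$ (else $G \in \mathcal{B}$), so the inductive hypothesis supplies a proper $1$-perfect forest $F'$ of $G - S$, and extending $F'$ by any perfect matching of $K_{2k} = B - v$ on $S$ produces the sought $F$. If every leaf block is a non-complete graph of odd order, I take such a leaf block $B$ with cut vertex $v$, apply the inductive hypothesis to $B$ (connected of odd order smaller than $n$, not in $\mathcal{B}$) to obtain a proper $1$-perfect forest $F_B$ of $B$ with some $w_B \in V(B)$ as even vertex, and apply the construction from the proof of Proposition~\ref{Thm1perfect} to the connected odd-order graph $G - S$ with distinguished vertex $v$ to obtain a $1$-perfect forest $F'_0$ of $G - S$ in which $v$ is the even vertex and every other vertex has degree $\geq 1$. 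The union $F := F_B \cup F'_0$ is then a proper $1$-perfect forest of $G$: if $w_B = v$ the combined degree at $v$ is even and $\geq 2$ so $v$ is the sole even vertex, while if $w_B \in S$ then $w_B$ is the sole even vertex of $F$ and $v$ picks up odd degree as the sum of its odd degree in $F_B$ and even degree in $F'_0$.

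The remaining situation, and the main obstacle, is when $G$ is itself a single block --- i.e., $2$-connected of odd order $\geq 5$ and non-complete --- so that leaf-block surgery does not apply. For this case a direct construction is required: one natural route is to locate an induced $P_3 = a\text{-}b\text{-}c$ in $G$ (which exists because $G$ is non-complete), apply Theorem~\ref{ThmKnown} to the connected even-order graph $G - b$ to obtain a proper $0$-perfect forest $F'$, and then add $b$ back together with the single edge $ab$, so that $a$ flips parity to become the unique even-degree vertex while $b$ becomes a leaf of degree $1$. The technical difficulty is to ensure that no other neighbor of $b$ lies in the tree of $F'$ containing $a$ (otherwise the new tree at $b$ fails to be induced); arranging this may require modifying $F'$ along an augmenting cycle or else using Theorem~\ref{ThmII} with a tailored parity function on $V(G)$ to construct the forest directly, and this is where the bulk of the remaining technical work lies.
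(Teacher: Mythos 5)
Your necessity argument and the cut-vertex cases of sufficiency are essentially sound and run parallel to the paper's Lemma~\ref{no1PF} and Case~1 of Theorem~\ref{ThmMain} (the paper organizes these around an arbitrary cut-vertex rather than leaf blocks, but the gluing arguments -- two $0$-perfect forests meeting at the cut-vertex, or a proper $1$-perfect forest of one side combined with a $1$-perfect forest of the other whose even vertex is the cut-vertex -- are the same in substance). However, the $2$-connected case is not a残 residual technicality: it is the core of the proof, and you have left it open. You acknowledge this yourself, and the route you sketch (take a proper $0$-perfect forest $F'$ of $G-b$ for the middle vertex $b$ of an induced $P_3$ and reattach $b$ by the single edge $ab$) has a real obstruction you have not overcome: there is no control over which neighbours of $b$ land in the tree of $F'$ containing $a$, and ``modifying $F'$ along an augmenting cycle'' is not an argument. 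Note also that your construction would make $b$ a leaf and $a$ the even vertex, so you would additionally need $a$'s new tree to remain induced after absorbing $b$, which can fail for every choice of induced $P_3$.

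The paper's resolution is different and hinges on a structural lemma you are missing (Lemma~\ref{LemBlock}): if $G$ is connected of even order, $xy\in E(G)$, and $G-\{x,y\}$ is connected, then $G-x\in{\cal B}$ and $G-y\in{\cal B}$ force $N[x]=N[y]$. Given an induced path $p_1p_2p_3$ in the $2$-connected non-complete $G$, one looks at the component $C_1$ of $G-\{p_2,p_3\}$ containing $p_1$. If $|V(C_1)|$ is odd, one combines a $1$-perfect forest of $C_1$ with even vertex $p_1$ (Proposition~\ref{Thm1perfect}) with a $0$-perfect forest of $G-V(C_1)$, adding the edge $p_1p_2$ if $p_1$ is isolated (this is safe precisely because $p_1p_3\notin E(G)$). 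If $|V(C_1)|$ is even, set $G'=G[V(C_1)\cup\{p_2,p_3\}]$; since $N_{G'}[p_2]\neq N_{G'}[p_3]$, Lemma~\ref{LemBlock} yields some $i$ with $G'-p_i\notin{\cal B}$, so induction gives a proper $1$-perfect forest there, and $2$-connectivity guarantees the complement of its vertex set is connected and of even order, so Theorem~\ref{ThmKnown} finishes the gluing. Without this lemma and case split (or a genuine substitute), your proof of the forward implication is incomplete.
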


Using this theorem and a linear-time algorithm for computing biconnected components in a graph \cite{HopTar}, in polynomial time we can decide whether a connected graph $G$  of odd order contains a proper $1$-perfect forest. If $G\not\in {\cal B}$, the proof by induction of Theorem \ref{ThmMain} yields a polynomial-time recursive algorithm to construct a proper 1-perfect forest. 

Our proof of Theorem \ref{main} is graph-theoretical and so are the proofs of Theorem \ref{ThmKnown} in \cite{Sco2001} and  \cite{CaroLZ}.
Recall that Gutin \cite{Gut} gave a linear-algebraic proof of Theorem \ref{ThmKnown}. It would interesting to see whether Theorem \ref{main} can {be} proved  using a linear-algebraic approach, too.

\GG{This paper is an extended version of conference paper \cite{GY2021perfect}. Here we solve an open problem from \cite{GY2021perfect} (see Theorem \ref{ThmW}) and provide a proof of Theorem \ref{inducedCycle} omitted in \cite{GY2021perfect}.}

\section{Proof of Theorem  \ref{ThmII}}\label{sec:min}

\begin{lemma} \label{LemI}
Let $G$ be a connected graph and let $f\colon V(G) \rightarrow \{0,1\}$ be an even-sum function.
If $H$ is an $f$-parity subgraph of $G$ of minimum size, then $H$ is an $f$-parity perfect forest.
\end{lemma}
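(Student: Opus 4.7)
The plan is to show both properties of a semiperfect forest — acyclicity and the inducedness of each tree — by local exchange arguments that preserve all parities while strictly decreasing the number of edges, contradicting minimality of $H$. The key tool is that taking the symmetric difference of $H$ with any Eulerian subgraph (in particular, any cycle) of $G$ preserves $d_H(x) \bmod 2$ at every vertex $x$, so the result is again an $f$-parity subgraph of $G$.

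First I would establish that $H$ is acyclic. Suppose for contradiction that $H$ contains a cycle $C$. Every vertex of $G$ has degree $0$ or $2$ in $C$, so the spanning subgraph $H' = H \setminus E(C)$ satisfies $d_{H'}(x) \equiv d_H(x) \equiv f(x) \pmod 2$ for all $x \in V(G)$. But $|E(H')| = |E(H)| - |E(C)| < |E(H)|$, contradicting the minimality of $H$. Hence $H$ is a spanning forest.

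Next I would show that every tree of $H$ is an induced subgraph of $G$. Suppose some tree $T$ of $H$ contains two vertices $u, v$ with $uv \in E(G) \setminus E(H)$. Let $P$ be the unique $u$–$v$ path in $T$; since $uv \notin E(H)$, we have $|E(P)| \ge 2$. The edge set $C := E(P) \cup \{uv\}$ forms a cycle in $G$, and $H' := H \triangle C$ is again an $f$-parity subgraph of $G$ (all $C$-degrees are even, so parities are preserved). A direct count gives
\[
|E(H')| = |E(H)| + |E(C)| - 2|E(H) \cap E(C)| = |E(H)| - |E(P)| + 1 \le |E(H)| - 1,
\]
again contradicting minimality.

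The only subtlety worth flagging is to make sure that the symmetric-difference construction really is permitted — i.e., that $H'$ remains a (simple) spanning subgraph of $G$. Since $uv \in E(G)$, all edges of $C$ lie in $G$, so $H' \subseteq G$; and $E(H) \cap E(C) = E(P)$ because $uv \notin E(H)$, which is exactly what makes the edge count drop. With these two exchange arguments, the lemma follows.
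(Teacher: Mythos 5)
Your proof is correct and takes essentially the same approach as the paper: remove a cycle to show acyclicity, and swap an induced path of a tree for a chord to show inducedness, both contradicting minimality of $|E(H)|$. Your framing via symmetric difference with cycles is just a slightly more formal packaging of the paper's exchange argument.
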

\begin{proof}
Assume that $H$ is an $f$-parity subgraph with minimum possible $|E(H)|$.
Clearly $H$ contains no cycles, as removing the edges of a cycle would contradict the minimality of $|E(H)|$.
Assume that some tree $T$ of $H$ is not an induced tree in $G$.
Let $xy$ be an edge of $G$, not belonging to $T$ but with $\{x,y\} \subseteq V(T)$.
Remove the unique $(x,y)$-path in $T$ from $H$ and add the edge $xy$ to $H$.
This decreases the number of edges in $H$ without changing the parity of the degree of any vertex, 
contradicting the minimality of $|E(H)|$. Therefore $H$ is indeed an $f$-parity perfect forest.
\end{proof}

Lemma~\ref{LemI}  implies the following:

\begin{theorem}\label{th:fpar}
Let $G$ be a connected graph and let $f\colon V(G) \rightarrow \{0,1\}$ be an even-sum function.
Then there exists an $f$-parity perfect forest $F$ in $G$. 
\end{theorem}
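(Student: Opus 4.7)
The plan is to deduce Theorem~\ref{th:fpar} as an immediate consequence of Lemma~\ref{LemI}, once we show that the collection of $f$-parity subgraphs of $G$ is nonempty. The lemma guarantees that any minimum-size $f$-parity subgraph is in fact an $f$-parity perfect forest, so existence of \emph{some} $f$-parity subgraph is the only thing that requires an argument.

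To establish existence, I would set $T=\{v\in V(G):f(v)=1\}$ and note that $|T|$ is even by the even-sum hypothesis, so we may partition $T$ into pairs $\{u_1,v_1\},\ldots,\{u_k,v_k\}$. Since $G$ is connected, for each pair $\{u_i,v_i\}$ there is a path $P_i$ in $G$ from $u_i$ to $v_i$. Let $H$ be the spanning subgraph of $G$ whose edge set is the symmetric difference $E(P_1)\triangle E(P_2)\triangle\cdots\triangle E(P_k)$. Then for every vertex $x$, the degree $d_H(x)$ has the same parity as the number of indices $i$ for which $x$ is an endpoint of $P_i$, because every appearance of $x$ as an internal vertex of some $P_i$ contributes an even amount to $d_H(x)\pmod 2$. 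By construction, each $v\in T$ is an endpoint of exactly one $P_i$, while each $v\notin T$ is an endpoint of none, so $d_H(v)\equiv f(v)\pmod 2$ for every $v\in V(G)$. Hence $H$ is an $f$-parity subgraph of $G$.

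With existence in hand, the set of $f$-parity subgraphs of $G$ is nonempty, so it contains a member $F$ of minimum size $|E(F)|$. Lemma~\ref{LemI} applied to $F$ yields that $F$ is an $f$-parity perfect forest, which completes the proof.

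There is no real obstacle here: the only content is the standard symmetric-difference construction of a $T$-join in a connected graph, and the only place where the two hypotheses (connectedness of $G$ and $f$ being even-sum) are used is exactly where one would expect—connectedness gives the paths $P_i$, and the even-sum condition lets us pair the vertices of $T$ in the first place.
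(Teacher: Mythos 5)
Your proposal is correct and follows essentially the same route as the paper: build an $f$-parity subgraph as the symmetric difference of paths joining the paired vertices of $T$, verify the parity condition by observing that internal occurrences contribute an even amount to the degree, and then invoke Lemma~\ref{LemI} on a minimum-size such subgraph. Your parity justification is stated a bit more carefully than in the paper, but the construction and the use of the lemma are identical.
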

\begin{proof} 
Let $x_1,x_2,\ldots,x_{k},y_1,y_2,\ldots,y_{k}$ be the vertices in $G$ with $f$-value equal to one.
Let $P_i$ be any $(x_i,y_i)$-path in $G$ for all $i=1,2,\ldots,k$, which exists as $G$ is connected.
Let $H$ be the spanning subgraph of $G$ such that an edge $e \in E(G)$ belongs to $H$ if and only if
$e$ belongs to an odd number of paths in $P_1,P_2,\ldots,P_{k}$. Let $x\in V(G).$ Observe that
$d_H(x)$ is odd if and only if $x$ is incident with an odd number of edges in $\cup_{i=1}^{k} E(P_i)$, which is
if and only if $x$ is the endpoint of one of the paths i.e. $f(x)=1.$ Thus, $H$ is an $f$-parity subgraph of $G.$
Lemma~\ref{LemI} now implies that if $H$ is the $f$-parity subgraph
of $G$ of minimum size, then $H$ is an $f$-parity perfect forest.
\end{proof}

Note that Theorem \ref{th:fpar} generalizes Theorem~\ref{ThmKnown}:
set $f(x)=1$ for all $x \in V(G).$ Thus, Theorem \ref{th:fpar} provides an alternative proof of Theorem~\ref{ThmKnown}.

\2


\thmm*
\begin{proof}
Let $G$ be a connected graph and let $f\colon V(G) \rightarrow \{0,1\}$ be an even-sum function.
Let $V(G)=\{v_1,v_2,\ldots,v_n\}$.
We will construct a weigthed auxillary graph $H$ as follows.  Let $V(H) = \cup_{i=1}^{n} X_i,$ where  for every $i\in [n]$, $|X_i| \in \{n-1,n\}$
and \ModII{|X_i|}{f(v_i)}.
For all $1 \leq i < j \leq n$ and all $u \in X_i$ and $v \in X_j,$ we let $uv \in E(H)$ if and only if $v_iv_j \in E(G)$.
Finally add a matching $M_i = \{e_1^i,e_2^i,\ldots,e_{\lfloor |X_i|/2 \rfloor}^i\}$  to $X_i$ for all $i \in [n]$.  Let the weight of all the edges within each $X_i$ (i.e. the edges in $M_i$) be zero and let all edges between 
different $X_i$'s have weight one.

  We first show that $H$ contains a perfect matching. As $\sum_{v \in V(G)} f(v)$ is even we may assume 
that $\{v_1,v_2,\ldots,v_{2k}\}$ are the vertices of $G$ with an $f$-value of one for some integer $k$ with $0 \leq k \leq n/2$.
Assume that $y_i \in X_i$ is the unique vertex in $X_i$ that is not saturated by $M_i$ for all $i \in [2k]$
and start of by letting $M$ be the matching containing all $M_i$'s.

Let $P_i = v_i v_{p_1^i} v_{p_2^i} \cdots v_{p_{l_i-1}^i} v_{i+k}$ be any path in $G$ from $v_i$ to $v_{i+k}$ where $i \in [k]$. 
It is not difficult to see that there exists an $M$-augmenting path, $Q_i$, in $H$ starting in $y_i$ 
and ending in $y_{i+k}$ and containing exactly the edges $e_i^{p_1^i}$, $e_i^{p_2^i}$, ..., $e_i^{p_{l_i-1}^i}$
from $M$. Also observe that $Q_1,Q_2,\ldots, Q_k$ are vertex disjoint, which implies that we can use all $Q_i$ to
increase the matching $M$ thereby obtaining a perfect matching in $H$.

We will now show the following claim. The {\em size} of a multiset $S$ is the total number of elements in $S,$ where if an element $e\in S$ 
is of multiplicity $r,$ then $e$ is counted $r$ times.

\2


 {\bf Claim~A:} {(a)}{\em If there exists a perfect matching in $H$ with weight $w^*$ then there exists a
 multiset of edges $E^*$ in $G$ of size $w^*$, such that $\ModII{d_{E^*}(x)}{f(x)}$ for all $x \in V(G)$.

{(b)} Conversely if  $E^*$ is a multiset of edges in $G$ of size $w^*$, such that  $\ModII{d_{E^*}(x)}{f(x)}$ for all $x \in V(G)$, then there exists a 
 perfect matching in $H$ with weight at most $w^*$.}

\2

{\em Proof of Claim~A:} First assume that we have a multiset of edges $E^*$ in $G$ of size $w^*\le W_{\max}$, such that 
\ModII{d_{E^*}(x)}{f(x)} for all $x \in V(G)$.
Let $M^*=\emptyset$. For every $v_i v_j \in E^*$ we will add edges between $X_i$ and $X_j$ to $M^*$ as follows: if $v_i v_j$ 
is of multiplicity {$r$ in $E^*,$  then we add an edge between $X_i$ and $X_j$ to $M^*$ if and only if $r$ is odd.}
Since we will add $2k_i+f(v_i)$ edges that are incident to $X_i$ for each $i\in [n]$ (where $k_i$ is some integer), we can 
add these edges such that their endvertices  are $V(e_1^i) \cup V(e_2^i) \cup \cdots \cup V(e_{k_i}^i)$ if $f(v_i)=0$ 
and $\{y_i\} \cup V(e_1^i) \cup V(e_2^i) \cup \cdots \cup V(e_{k_i}^i)$ if $f(v_i)=1$ for each $i\in [n],$ where $V(e^i_j)$ denotes the pair of endvertices of $e^i_j.$
We can now extend $M^*$ to a perfect matching by adding $M_i \setminus \{e_1^i,e_2^i,\ldots,e_{k_i}^i\}$ for each $i \in [n]$.
This gives us a perfect matching in $H$ with weight at most $|E^*|$ as desired.

Conversely assume that there exists a perfect matching $M^*$ in $H$ with weight $w^*$.  
Initially let $E^* = \emptyset$. For every $xy \in M^*$ with weight one (i.e. $x\in X_i$ and $y\in X_j$ for some $i\ne j$), 
add $v_iv_j$ to $E^*$. This gives us the desired multiset $E^*$, thereby completing the proof 
of Claim~A.

\2

We have proved that $H$ has a perfect matching. Let $M_{\min}$ be a minimum weight perfect matching in $H$ which can be determined
in polynomial time using Edmonds' blossom algorithm as a subroutine, see e.g. \cite{LovPla}. 
{
Let $W_{\min}$ be the weight of $M_{\min}.$
By Claim~A(a), using $M_{\min},$ in polynomial time we can find a multiset of edges $E^*$ in $G$ of size $W_{\min}$ , such that \ModII{d_{E^*}(x)}{f(x)} for all $x \in V(G)$.
By Claim~A(b), since $W_{\min}$ is the minimum weight of a perfect matching in $H$, $W_{\min}$ is  minimum size of a multiset of edges $E^{**}$, such that \ModII{d_{E^{**}}(x)}{f(x)} for all $x \in V(G)$.
}

Note that no edge is in $E^*$ more than once, 
since if some edge, $e$, appears twice, then we can delete two copies of $e$ from $E^*$, thereby contradicting the 
minimality of $|E^*|$. Let $F$ be the spanning subgraph of $G$ with edge set $E^*$.
By Lemma~\ref{LemI} we note that $F$ is an $f$-parity perfect forest, which completes the proof of the theorem.
\end{proof}

\section{Proof of Theorem \ref{ThmI}}\label{sec:max}

We will reduce from the {\em not-all-equal 3-SAT} problem, abbreviated to NAE-3-SAT, which is the problem of determining whether an instance of $3$-SAT has a truth assignment
to its variables such that every clause contains both a true and a false literal. If this is the case we say that the instance is {\em NAE-satisfied}.
 NAE-3-SAT is known to be NP-hard to solve \cite{Sch}.
Let $I$ be an instance of NAE-3-SAT with clauses $C_1,C_2,\ldots,C_m$ and variables $v_1,v_2,\ldots,v_n$. We will construct a graph $G$ such that
$G$ contains a $0$-perfect forest with at least $n-2$ edges if and only if $I$ is NAE-satisfied.

We first create a gadget $H_i$ for each $i=1,2,\ldots,n$ as follows. Let $$V(H_i) = \{x_1^i,z_1^i,y_1^i,x_2^i,z_2^i,y_2^i\}$$ and add all possible edges to $H_i$,
except $x_1^iy_1^i$ and $x_2^iy_2^i$. For all $i=1,2,\ldots, n-1$ we then add all edges between 
$\{y_1^i,y_2^i\}$ and $\{x_1^{i+1}, x_2^{i+1}\}$.
Now add a pendent edge to each vertex in $V(H_i) \setminus \{x_1^1,x_2^1,y_1^n,y_2^n\}$ for all $i=1,2,\ldots,n$.
 See Figure~\ref{gadgetHs} for an illustration of this part of $G$, which is denoted by $Q$.
\begin{figure}[hbtp]
\begin{center}
\tikzstyle{vertexB}=[circle,draw, top color=gray!5, bottom color=gray!30,minimum size=12pt, scale=0.6, inner sep=0.1pt]
\tikzstyle{vertexL}=[circle,draw, minimum size=15pt, scale=0.6, inner sep=0.1pt]
\begin{tikzpicture}[scale=0.45]

\node (x11) at (2,5) [vertexL] {$x_1^1$};
\node (z11) at (4,5) [vertexL] {$z_1^1$};
\node (x21) at (6,5) [vertexL] {$y_1^1$};
\node (y11) at (2,2) [vertexL] {$x_2^1$};
\node (z21) at (4,2) [vertexL] {$z_2^1$};
\node (y21) at (6,2) [vertexL] {$y_2^1$};
\node (z11p) at (4,7) [vertexB] {};
\node (x21p) at (6,7) [vertexB] {};
\node (z21p) at (4,0) [vertexB] {};
\node (y21p) at (6,0) [vertexB] {};
\draw [line width=0.02cm] (z11p) to (z11);
\draw [line width=0.02cm] (x21p) to (x21);
\draw [line width=0.02cm] (z21p) to (z21);
\draw [line width=0.02cm] (y21p) to (y21);

\node (x12) at (9,5) [vertexL] {$x_1^2$};
\node (z12) at (11,5) [vertexL] {$z_1^2$};
\node (x22) at (13,5) [vertexL] {$y_1^2$};
\node (y12) at (9,2) [vertexL] {$x_2^2$};
\node (z22) at (11,2) [vertexL] {$z_2^2$};
\node (y22) at (13,2) [vertexL] {$y_2^2$};
\node (x12p) at (9,7) [vertexB] {};
\node (z12p) at (11,7) [vertexB] {};
\node (x22p) at (13,7) [vertexB] {};
\node (y12p) at (9,0) [vertexB] {};
\node (z22p) at (11,0) [vertexB] {};
\node (y22p) at (13,0) [vertexB] {};
\draw [line width=0.02cm] (x12p) to (x12);
\draw [line width=0.02cm] (z12p) to (z12);
\draw [line width=0.02cm] (x22p) to (x22);
\draw [line width=0.02cm] (y12p) to (y12);
\draw [line width=0.02cm] (z22p) to (z22);
\draw [line width=0.02cm] (y22p) to (y22);

\node (x1n) at (20,5) [vertexL] {$x_1^n$};
\node (z1n) at (22,5) [vertexL] {$z_1^n$};
\node (x2n) at (24,5) [vertexL] {$y_1^n$};
\node (y1n) at (20,2) [vertexL] {$x_2^n$};
\node (z2n) at (22,2) [vertexL] {$z_2^n$};
\node (y2n) at (24,2) [vertexL] {$y_2^n$};
\node (x1np) at (20,7) [vertexB] {};
\node (z1np) at (22,7) [vertexB] {};
\node (y1np) at (20,0) [vertexB] {};
\node (z2np) at (22,0) [vertexB] {};
\draw [line width=0.02cm] (x1np) to (x1n);
\draw [line width=0.02cm] (z1np) to (z1n);
\draw [line width=0.02cm] (y1np) to (y1n);
\draw [line width=0.02cm] (z2np) to (z2n);

\draw [line width=0.02cm] (x11) to (z11);
\draw [line width=0.02cm] (x21) to (z11);
\draw [line width=0.02cm] (y11) to (z21);
\draw [line width=0.02cm] (y21) to (z21);

\draw [line width=0.02cm] (x11) to (y11);
\draw [line width=0.02cm] (x11) to (z21);
\draw [line width=0.02cm] (x11) to (y21);

\draw [line width=0.02cm] (z11) to (y11);
\draw [line width=0.02cm] (z11) to (z21);
\draw [line width=0.02cm] (z11) to (y21);

\draw [line width=0.02cm] (x21) to (y11);
\draw [line width=0.02cm] (x21) to (z21);
\draw [line width=0.02cm] (x21) to (y21);
\draw [line width=0.02cm] (x12) to (z12);
\draw [line width=0.02cm] (x22) to (z12);
\draw [line width=0.02cm] (y12) to (z22);
\draw [line width=0.02cm] (y22) to (z22);

\draw [line width=0.02cm] (x12) to (y12);
\draw [line width=0.02cm] (x12) to (z22);
\draw [line width=0.02cm] (x12) to (y22);

\draw [line width=0.02cm] (z12) to (y12);
\draw [line width=0.02cm] (z12) to (z22);
\draw [line width=0.02cm] (z12) to (y22);

\draw [line width=0.02cm] (x22) to (y12);
\draw [line width=0.02cm] (x22) to (z22);
\draw [line width=0.02cm] (x22) to (y22);
\draw [line width=0.02cm] (x1n) to (z1n);
\draw [line width=0.02cm] (x2n) to (z1n);
\draw [line width=0.02cm] (y1n) to (z2n);
\draw [line width=0.02cm] (y2n) to (z2n);

\draw [line width=0.02cm] (x1n) to (y1n);
\draw [line width=0.02cm] (x1n) to (z2n);
\draw [line width=0.02cm] (x1n) to (y2n);

\draw [line width=0.02cm] (z1n) to (y1n);
\draw [line width=0.02cm] (z1n) to (z2n);
\draw [line width=0.02cm] (z1n) to (y2n);

\draw [line width=0.02cm] (x2n) to (y1n);
\draw [line width=0.02cm] (x2n) to (z2n);
\draw [line width=0.02cm] (x2n) to (y2n);

\draw [line width=0.02cm] (x21) to (x12);
\draw [line width=0.02cm] (x21) to (y12);
\draw [line width=0.02cm] (y21) to (x12);
\draw [line width=0.02cm] (y21) to (y12);

\draw [dotted, line width=0.02cm] (x22) to (14,5);
\draw [dotted, line width=0.02cm] (x22) to (13.8,4.2);
\draw [dotted, line width=0.02cm] (y22) to (14,2);
\draw [dotted, line width=0.02cm] (y22) to (13.8,2.8);

\draw [dotted, line width=0.02cm] (x1n) to (19,5);
\draw [dotted, line width=0.02cm] (x1n) to (19.2,4.2);
\draw [dotted, line width=0.02cm] (y1n) to (19,2);
\draw [dotted, line width=0.02cm] (y1n) to (19.2,2.8);
\end{tikzpicture}
\end{center}
\caption{The gadgets $H_1,H_2, \ldots, H_n$ and the edges connecting these. The resulting graph is denoted by $Q$.} \label{gadgetHs}
\end{figure}
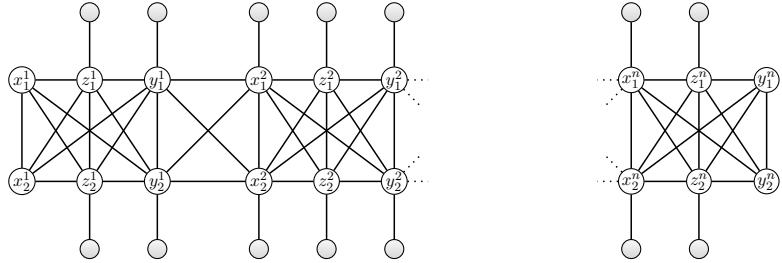
We will now complete our construction of $G$. \newline Let 
$V(G) = V(Q) \cup \{c_1,c_2,\ldots,c_m\} \cup \{c_1',c_2',\ldots,c_m'\}.$
For each $j=1,2,\ldots,m$ we will add an edge from both $c_j$ and $c_j'$ to $y_2^i$ if and only if $v_i$ is 
a literal in the clause $C_j$.  We will furthermore add an edge from both $c_j$ and $c_j'$ to $y_1^i$ if and only $\overline{v_i}$ is 
a literal in the clause $C_j$. This completes the construction of $G$. See Figure~\ref{example} depicting $G$ for $I=(v_1, v_2,\overline{v_3})$.
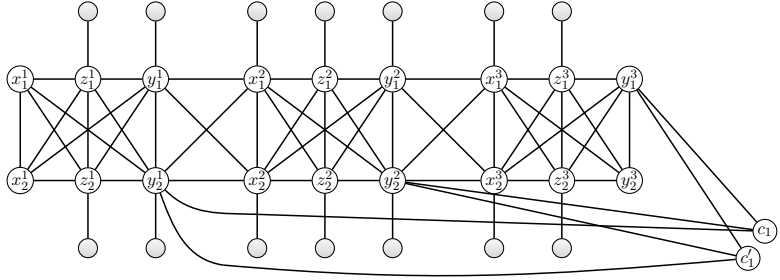
\begin{figure}[hbtp]
\begin{center}
\tikzstyle{vertexB}=[circle,draw, top color=gray!5, bottom color=gray!30,minimum size=12pt, scale=0.6, inner sep=0.1pt]
\tikzstyle{vertexL}=[circle,draw, minimum size=15pt, scale=0.6, inner sep=0.1pt]
\begin{tikzpicture}[scale=0.45]

\node (x11) at (2,5) [vertexL] {$x_1^1$};
\node (z11) at (4,5) [vertexL] {$z_1^1$};
\node (x21) at (6,5) [vertexL] {$y_1^1$};
\node (y11) at (2,2) [vertexL] {$x_2^1$};
\node (z21) at (4,2) [vertexL] {$z_2^1$};
\node (y21) at (6,2) [vertexL] {$y_2^1$};
\node (z11p) at (4,7) [vertexB] {};
\node (x21p) at (6,7) [vertexB] {};
\node (z21p) at (4,0) [vertexB] {};
\node (y21p) at (6,0) [vertexB] {};
\draw [line width=0.02cm] (z11p) to (z11);
\draw [line width=0.02cm] (x21p) to (x21);
\draw [line width=0.02cm] (z21p) to (z21);
\draw [line width=0.02cm] (y21p) to (y21);

\node (x12) at (9,5) [vertexL] {$x_1^2$};
\node (z12) at (11,5) [vertexL] {$z_1^2$};
\node (x22) at (13,5) [vertexL] {$y_1^2$};
\node (y12) at (9,2) [vertexL] {$x_2^2$};
\node (z22) at (11,2) [vertexL] {$z_2^2$};
\node (y22) at (13,2) [vertexL] {$y_2^2$};
\node (x12p) at (9,7) [vertexB] {};
\node (z12p) at (11,7) [vertexB] {};
\node (x22p) at (13,7) [vertexB] {};
\node (y12p) at (9,0) [vertexB] {};
\node (z22p) at (11,0) [vertexB] {};
\node (y22p) at (13,0) [vertexB] {};
\draw [line width=0.02cm] (x12p) to (x12);
\draw [line width=0.02cm] (z12p) to (z12);
\draw [line width=0.02cm] (x22p) to (x22);
\draw [line width=0.02cm] (y12p) to (y12);
\draw [line width=0.02cm] (z22p) to (z22);
\draw [line width=0.02cm] (y22p) to (y22);

\node (x13) at (16,5) [vertexL] {$x_1^3$};
\node (z13) at (18,5) [vertexL] {$z_1^3$};
\node (x23) at (20,5) [vertexL] {$y_1^3$};
\node (y13) at (16,2) [vertexL] {$x_2^3$};
\node (z23) at (18,2) [vertexL] {$z_2^3$};
\node (y23) at (20,2) [vertexL] {$y_2^3$};
\node (x13p) at (16,7) [vertexB] {};
\node (z13p) at (18,7) [vertexB] {};
\node (y13p) at (16,0) [vertexB] {};
\node (z23p) at (18,0) [vertexB] {};
\draw [line width=0.02cm] (x13p) to (x13);
\draw [line width=0.02cm] (z13p) to (z13);
\draw [line width=0.02cm] (y13p) to (y13);
\draw [line width=0.02cm] (z23p) to (z23);

\draw [line width=0.02cm] (x11) to (z11);
\draw [line width=0.02cm] (x21) to (z11);
\draw [line width=0.02cm] (y11) to (z21);
\draw [line width=0.02cm] (y21) to (z21);

\draw [line width=0.02cm] (x11) to (y11);
\draw [line width=0.02cm] (x11) to (z21);
\draw [line width=0.02cm] (x11) to (y21);

\draw [line width=0.02cm] (z11) to (y11);
\draw [line width=0.02cm] (z11) to (z21);
\draw [line width=0.02cm] (z11) to (y21);

\draw [line width=0.02cm] (x21) to (y11);
\draw [line width=0.02cm] (x21) to (z21);
\draw [line width=0.02cm] (x21) to (y21);
\draw [line width=0.02cm] (x12) to (z12);
\draw [line width=0.02cm] (x22) to (z12);
\draw [line width=0.02cm] (y12) to (z22);
\draw [line width=0.02cm] (y22) to (z22);

\draw [line width=0.02cm] (x12) to (y12);
\draw [line width=0.02cm] (x12) to (z22);
\draw [line width=0.02cm] (x12) to (y22);

\draw [line width=0.02cm] (z12) to (y12);
\draw [line width=0.02cm] (z12) to (z22);
\draw [line width=0.02cm] (z12) to (y22);

\draw [line width=0.02cm] (x22) to (y12);
\draw [line width=0.02cm] (x22) to (z22);
\draw [line width=0.02cm] (x22) to (y22);
\draw [line width=0.02cm] (x13) to (z13);
\draw [line width=0.02cm] (x23) to (z13);
\draw [line width=0.02cm] (y13) to (z23);
\draw [line width=0.02cm] (y23) to (z23);

\draw [line width=0.02cm] (x13) to (y13);
\draw [line width=0.02cm] (x13) to (z23);
\draw [line width=0.02cm] (x13) to (y23);

\draw [line width=0.02cm] (z13) to (y13);
\draw [line width=0.02cm] (z13) to (z23);
\draw [line width=0.02cm] (z13) to (y23);

\draw [line width=0.02cm] (x23) to (y13);
\draw [line width=0.02cm] (x23) to (z23);
\draw [line width=0.02cm] (x23) to (y23);

\draw [line width=0.02cm] (x21) to (x12);
\draw [line width=0.02cm] (x21) to (y12);
\draw [line width=0.02cm] (y21) to (x12);
\draw [line width=0.02cm] (y21) to (y12);

\draw [line width=0.02cm] (x22) to (x13);
\draw [line width=0.02cm] (x22) to (y13);
\draw [line width=0.02cm] (y22) to (x13);
\draw [line width=0.02cm] (y22) to (y13);

\node (c1) at (24,0.5) [vertexL] {$c_1$};
\draw [line width=0.02cm] (c1) to (9,1);
\draw [line width=0.02cm] (9,1) to [out=178, in=315] (y21);
\draw [line width=0.02cm] (c1) to (y22);
\draw [line width=0.02cm] (c1) to (x23);

\node (c1p) at (23.5,-0.3) [vertexL] {$c_1'$};
\draw [line width=0.02cm] (c1p) to [out=185, in=355] (8,-0.5);
\draw [line width=0.02cm] (8,-0.5) to [out=172, in=290] (y21);
\draw [line width=0.02cm] (c1p) to (y22);
\draw [line width=0.02cm] (c1p) to (x23);

\end{tikzpicture}
\end{center}
\caption{The graph $G$ if $I=(v_1,v_2,\overline{v_3})$.} \label{example}
\end{figure}

We will now show that $G$ contains a $0$-perfect forest of size at least $n-2$ if and only if $I$ is NAE-satisfied.
First assume that $I$ is NAE-satisfied and consider a truth assignment $\tau$ NAE-satisfying $I.$ We will construct two vertex-disjoint induced trees, 
$T_1$ and $T_2$, in $G$, such that all degrees in the trees $T_i$ are odd for $i \in [2]$. If $v_i$ is true in $\tau$ then add the vertices in $\{x_1^i,z_1^i,y_1^i\}$ to 
$T_1$ and the vertices in $\{x_2^i,z_2^i,y_2^i\}$ to  $T_2$. Conversely, if $v_i$ is false in $\tau$ then add the vertices in $\{x_1^i,z_1^i,y_1^i\}$ to
$T_2$ and the vertices in $\{x_2^i,z_2^i,y_2^i\}$ to  $T_1$. We furthermore add all vertices of degree one to the same tree as their
neighbour. Note that the vertices we have added so far to $T_i$ (for $i \in [2]$) induce a tree in $G,$ where every vertex has odd degree in $T_i$.

Finally as $I$ is NAE-satisfied we note for $j \in [m]$, each of $c_j$ and $c_j'$ has one edge into one of the $T_i$'s and
two edges into the other $T_i$. Add each of $c_j$ and $c_j'$ to the $T_i$ with which it is only connected by one edge. 
We note that after this operation the vertices we have added so far to $T_i$ (for $i \in [2]$) still induces a tree in $G$ 
where every vertex has odd degree in $T_i$. After doing the above operation for all  $j \in [m]$ we have obtained the desired 
trees $T_1$ and $T_2$ whose union form a $0$-perfect forest in $G$ with $|V(G)|-2$ edges. See Figure~\ref{exampleSOL} for the found
$T_1$ and $T_2$ if the instance of NAE-3-SAT is $I=(v_1,v_2,\overline{v_3})$ and the truth assignment is to set all variables equal to true.
\begin{figure}[hbtp]
\begin{center}
\tikzstyle{vertexB}=[circle,draw, top color=gray!5, bottom color=gray!30,minimum size=12pt, scale=0.6, inner sep=0.1pt]
\tikzstyle{vertexL}=[circle,draw, minimum size=15pt, scale=0.6, inner sep=0.1pt]
\begin{tikzpicture}[scale=0.45]

\node (x11) at (2,5) [vertexL] {$x_1^1$};
\node (z11) at (4,5) [vertexL] {$z_1^1$};
\node (x21) at (6,5) [vertexL] {$y_1^1$};
\node (y11) at (2,2) [vertexL] {$x_2^1$};
\node (z21) at (4,2) [vertexL] {$z_2^1$};
\node (y21) at (6,2) [vertexL] {$y_2^1$};
\node (z11p) at (4,7) [vertexB] {};
\node (x21p) at (6,7) [vertexB] {};
\node (z21p) at (4,0) [vertexB] {};
\node (y21p) at (6,0) [vertexB] {};
\draw [line width=0.02cm] (z11p) to (z11);
\draw [line width=0.02cm] (x21p) to (x21);
\draw [line width=0.02cm] (z21p) to (z21);
\draw [line width=0.02cm] (y21p) to (y21);

\node (x12) at (9,5) [vertexL] {$x_1^2$};
\node (z12) at (11,5) [vertexL] {$z_1^2$};
\node (x22) at (13,5) [vertexL] {$y_1^2$};
\node (y12) at (9,2) [vertexL] {$x_2^2$};
\node (z22) at (11,2) [vertexL] {$z_2^2$};
\node (y22) at (13,2) [vertexL] {$y_2^2$};
\node (x12p) at (9,7) [vertexB] {};
\node (z12p) at (11,7) [vertexB] {};
\node (x22p) at (13,7) [vertexB] {};
\node (y12p) at (9,0) [vertexB] {};
\node (z22p) at (11,0) [vertexB] {};
\node (y22p) at (13,0) [vertexB] {};
\draw [line width=0.02cm] (x12p) to (x12);
\draw [line width=0.02cm] (z12p) to (z12);
\draw [line width=0.02cm] (x22p) to (x22);
\draw [line width=0.02cm] (y12p) to (y12);
\draw [line width=0.02cm] (z22p) to (z22);
\draw [line width=0.02cm] (y22p) to (y22);

\node (x13) at (16,5) [vertexL] {$x_1^3$};
\node (z13) at (18,5) [vertexL] {$z_1^3$};
\node (x23) at (20,5) [vertexL] {$y_1^3$};
\node (y13) at (16,2) [vertexL] {$x_2^3$};
\node (z23) at (18,2) [vertexL] {$z_2^3$};
\node (y23) at (20,2) [vertexL] {$y_2^3$};
\node (x13p) at (16,7) [vertexB] {};
\node (z13p) at (18,7) [vertexB] {};
\node (y13p) at (16,0) [vertexB] {};
\node (z23p) at (18,0) [vertexB] {};
\draw [line width=0.02cm] (x13p) to (x13);
\draw [line width=0.02cm] (z13p) to (z13);
\draw [line width=0.02cm] (y13p) to (y13);
\draw [line width=0.02cm] (z23p) to (z23);

\draw [line width=0.02cm] (x11) to (z11);
\draw [line width=0.02cm] (x21) to (z11);
\draw [line width=0.02cm] (y11) to (z21);
\draw [line width=0.02cm] (y21) to (z21);

\draw [line width=0.02cm] (x12) to (z12);
\draw [line width=0.02cm] (x22) to (z12);
\draw [line width=0.02cm] (y12) to (z22);
\draw [line width=0.02cm] (y22) to (z22);

\draw [line width=0.02cm] (x13) to (z13);
\draw [line width=0.02cm] (x23) to (z13);
\draw [line width=0.02cm] (y13) to (z23);
\draw [line width=0.02cm] (y23) to (z23);

\draw [line width=0.02cm] (x21) to (x12);
\draw [line width=0.02cm] (y21) to (y12);

\draw [line width=0.02cm] (x22) to (x13);
\draw [line width=0.02cm] (y22) to (y13);

\node (c1) at (24,0.5) [vertexL] {$c_1$};
\draw [line width=0.02cm] (c1) to (x23);

\node (c1p) at (23.5,-0.3) [vertexL] {$c_1'$};
\draw [line width=0.02cm] (c1p) to (x23);

\end{tikzpicture}
\end{center}
\caption{The trees $T_1$ and $T_2$ if $I=(v_1,v_2,\overline{v_3})$ and $v_1=v_2=v_3=true$.} \label{exampleSOL}
\end{figure}
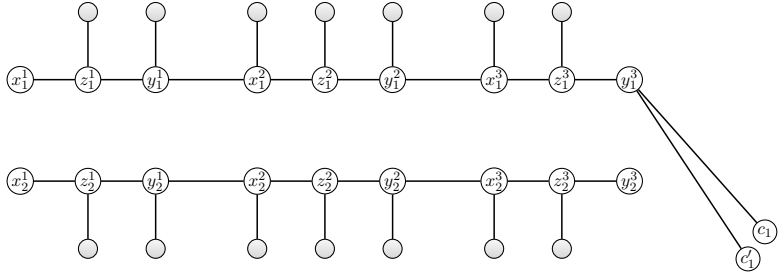

Conversely, assume that $G$ contains a $0$-perfect forest with at least $|V(G)|-2$ edges. As $G$ is not a tree 
this implies that $G$ contain two vertex-disjoint trees $T_1$ and $T_2$ such that each $T_i$ is an induced tree in $G$ of order at least 2,
all degrees in each $T_i$ are odd, and $V(T_1)$ and $V(T_2)$ partition $V(G)$. We will now prove the following claims where Claim~C completes 
the proof of the theorem.

\2

{\bf Claim~A:} {\em For each $i \in [n]$ one of the following cases hold.
\begin{description}
 \item[A.1:] $\{x_1^i,z_1^i,y_1^i\} \in V(T_1)$ and $\{x_2^i,z_2^i,y_2^i\} \in V(T_2)$.
 \item[A.2:] $\{x_1^i,z_2^i,y_1^i\} \in V(T_1)$ and $\{x_2^i,z_1^i,y_2^i\} \in V(T_2)$.
 \item[A.3:] $\{x_1^i,z_1^i,y_1^i\} \in V(T_2)$ and $\{x_2^i,z_2^i,y_2^i\} \in V(T_1)$.
 \item[A.4:] $\{x_1^i,z_2^i,y_1^i\} \in V(T_2)$ and $\{x_2^i,z_1^i,y_2^i\} \in V(T_1)$.
\end{description}
}

\2

{\em Proof of Claim~A:} 
As the only two non-edges in $H_i$ are $x_1^i y_1^i$ and $x_2^i y_2^i$ we note that there 
exist a $4$-cycle on every set of $4$ vertices in $H_i$. Therefore $|V(T_j) \cap V(H_i)| \geq 4$ is
not possible for any $j \in [2]$ and $i \in [n]$. So $|V(T_j) \cap V(H_i)|=3$ for $j \in [2]$ and $i \in [n]$.

As there is no $3$-cycle in $G[V(T_j)]$ for $j \in [2]$ we note that $x_1^i$ and $y_1^i$ must belong to one of 
the trees, say $T_j$, and $x_2^i$ and $y_2^i$ must belong to the other tree, $T_{3-j}$. 
So if $x_1^i \in V(T_1)$ then $y_1^i \in V(T_1)$ and $\{x_2^i,y_2^i\} \subseteq V(T_2)$ and we are in case A.1 or A.2.
On the other hand if  $x_1^i \in V(T_2)$ then $y_1^i \in V(T_2)$ and $\{x_2^i,y_2^i\} \subseteq V(T_1)$ and we are in case A.3 or A.4.
This completes the proof of Claim~A.

\2

{\bf Claim~B:} {\em For $i=1,2$, $G[V(Q) \cap V(T_i)]$ is a tree where all vertices have odd degree.}

\2

{\em Proof of Claim~B:} Any vertex in $G$ with degree one must belong to the same tree, $T_j$, as its neighbour, as both $T_1$ and
$T_2$ have order at least two. By Claim~A, we therefore note that $G[V(Q) \cap V(T_i)]$ is a path of length $3n$ with a pendent edge 
attached to each non-endpoint of the path. This implies that $G[V(Q) \cap V(T_i)]$ is a tree where all vertices have odd degree (as
all degrees are either 1 or 3). This completes the proof of Claim~B.

\2

{\bf Claim~C:} {\em The instance $I$ is NAE-satisfiable.}

\2

{\em Proof of Claim~C:} Assume that the vertex $c_j$ belongs to $T_1$. 
First suppose that $|N_G(c_j) \cap V(T_1)|=0$. In this case $c_j$ has no neighbours in $T_1$, a contradiction, as $T_1$ is a tree with order at least two.
So $|N_G(c_j) \cap V(T_1)| \geq 1$.  Assume that $|N_G(c_j) \cap V(T_1)| \geq 2$. As $T_1$ is an induced tree in $G$, $c_j$ must have at least two 
neighbours, say $x$ and $y$, in $T_1$. However, by Claim~B, there exists a $(x,y)$-path in $T_1$ using only vertices from $V(Q)$, which 
implies that there is a cycle in $T_1$, a contradiction. Therefore $|N_G(c_j) \cap V(T_1)|=1.$

Analogously, we can show that $|N_G(c_j) \cap V(T_2)|=1$, whenever $c_j \in V(T_2)$.   
So each clause $C_j$ ($j \in [m]$) has either exactly one literal that is false (if $c_j \in V(T_1)$) 
or exactly one literal  that is true (if $c_j \in V(T_2)$). This implies that $I$ is NAE-satisfiable, which completes the proof of Claim~C and the theorem.

\section{Basics of parameterized complexity and Proof of Theorem \ref{ThmW}}\label{sec:W1}

In this section, we first provide necessary basic notions of parameterized complexity and then prove Theorem \ref{ThmW}.

\subsection{Basics of parameterized complexity}

\GG{
An instance of a parameterized problem $\Pi$
is a pair $(I,k)$ where $I$ is the {\em main part} and $k$ is the
{\em parameter}; the latter is usually a non-negative integer.  
A parameterized problem is
{\em fixed-parameter tractable} ({\sf FPT}) if there exists a computable function
$f$ such that instances $(I,k)$ can be solved in time $O(f(k)|{I}|^c)$
where $|I|$ denotes the size of~$I$ and $c$ is an absolute constant. The class of all fixed-parameter
tractable decision problems is called {{\sf FPT}}. 

Consider two parameterized problems $\Pi$ and $\Pi'$. We say that $\Pi$ has a {\em parameterized reduction} to $\Pi'$ if there are functions 
$k\mapsto k'$ and $k\mapsto k''$ from $\mathbb{N}$ to $\mathbb{N}$
and a function $(I,k)\mapsto (I',k')$ such that 

\begin{enumerate}
 \item  $(I,k)\mapsto (I',k')$ is computable in $k''(|I|+k)^{O(1)}$ time, and 
 \item $(I,k)$ is a yes-instance of $\Pi$ if and only if $(I',k')$ is a yes-instance of $\Pi'$.
\end{enumerate}

While {\sf FPT} is a parameterized complexity analog of {\sf P} in classic complexity theory, there are many parameterized hardness classes, forming a nested sequence of which {\sf FPT} is the first member: {\sf FPT}$\subseteq$ {\sf W}[1]$\subseteq$ {\sf W}[2 $]\subseteq \dots$.
It is well known that if the Exponential Time Hypothesis holds then ${\sf FPT} \ne {\sf W}[1]$.\footnote{The Exponential Time Hypothesis is a conjecture that there is no algorithm solving 3-CNF Satisfiability in time $2^{o(n)}$, where $n$ is the number of variables.}
Hence, {\sf W}[1] is generally viewed as a parameterized intractability class, which is an analog of {\sf NP} in classical complexity.  Consider the following two parameterized problems. In the {\sc Independent Set} problem parameterized by $k$, given a graph $G$ and a natural number $k$, we are to decide whether $G$ has an independent set with $k$ vertices. In the {\sc Dominating Set} problem parameterized by $k$, given a graph $G=(V,E)$ and a natural number $k$, we are to decide whether $G$ has a set $S$ of $k$ vertices such that every vertex in $V\setminus S$ is adjacent to some vertex in $S.$
A parameterized problem $\Pi$ is in {\sf W}[1] ({\sf W}[2], respectively) \AY{if there} is parameterized reduction from $\Pi$ to {\sc Independent Set} ({\sc Dominating Set}, respectively).
Thus, every {\sf W}[1]-hard problem $\Pi_1$ ( {\sf W}[2]-hard problem $\Pi_2$, respectively) is not `easier' than {\sc Independent Set} ({\sc Dominating Set}, respectively), i.e., {\sc Independent Set} ({\sc Dominating Set}, respectively) has a parameterized reduction to $\Pi_1$ ($\Pi_2,$ respectively).

For more information on parameterized algorithms and complexity, see recent books \cite{Cygan+,downey2013}.
}

\subsection{Proof of Theorem \ref{ThmW}}

\begin{definition}\label{def1}
Let $k$ be an integer and $G$ a graph of order $n-2$.  We now define $G'$, $H_1^G$, $H_2^G$ and $H_3^G$
as follows. (See Figure \ref{illustrationH} for illustration.)

\begin{description}
 \item[$G'$:] $G'$ is obtained from $G$ by adding to it two new isolated vertices.
Let $V(G')=\{v_1,v_2,\ldots,v_n\}$ such that $v_1$ and $v_k$ are the two isolated
vertices added to $G$ in order to create $G'$.

 \item[$H_1^G$:] Let $G_i'$  be a copy of $G'$, for $i \in \{1,2\}$, where
$V(G_i')=\{v_1^i,v_2^i,\ldots,v_n^i\}$ and $v_j^i$ is the copy of $v_j$ in $G'$.
Let $H_1^G$ be defined such that $V(H_1^G) = V(G'_1) \cup V(G'_2)$
and $N_{H_1^G}[v_j^1] = N_{H_1^G}[v_j^2] = N_{G'_1}[v_j] \cup  N_{G'_2}[v_j]$ for all
$j \in [n]$.

 \item[$H_2^G$:] Create $H_2^G$ from $H_1^G$ as follows. For for all $a,b \in [n]$ where 
$v_a^1 v_b^2 \not\in E(H_1^G)$ add the vertex $w_{a,b}$ and the 
 edges $v_a^1 w_{a,b}$ and $v_b^2 w_{a,b}$ to $H_1^G$.

 \item[$H_3^G$:] Recall that $v_1$ and $v_k$ are isolated vertices in $G'$.
Let $H_3^G$ be the graph obtained from $H_2^G$ by adding a pendent edge to all vertices in $H_2^G$ except
$v_1^1$ and $v_k^2$. Denote these pendent edges by $E_P^G$.
\end{description}
\end{definition}

In the construction of $H_2^G$ we could also add  $w_{a,b}$ and the                         
 edges $v_a^1 w_{a,b}$ and $v_b^2 w_{a,b}$ to $H_1^G$ for all $a,b \in [n]$ (without the condition that
$v_a^1 v_b^2 \not\in E(H_1^G)$) and the following theorem would still hold. 
We, however, chose to only add $w_{a,b}$ when $v_a^1 v_b^2 \not\in E(H_1^G)$ as this decreases the number of vertices and
edges in $H_2^G$ and $H_3^G$ and also makes it easier to depict these graphs.

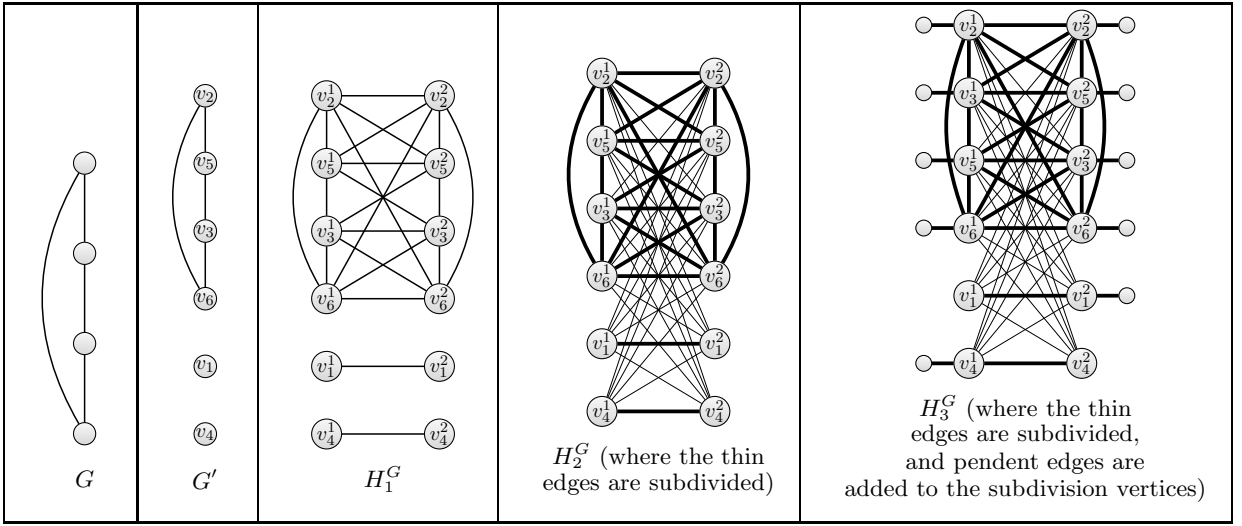
\begin{figure}[hbtp]
\begin{center}
\begin{tabular}{|c|c|c|c|c|} \hline
\tikzstyle{vertexL}=[circle,draw, top color=gray!5, bottom color=gray!30,minimum size=12pt, scale=0.7, inner sep=0.1pt]
\begin{tikzpicture}[scale=0.30]
\draw (0,-1) node {\mbox{ }};
\draw (2,14.5) node {\mbox{ }};
\draw (1,0) node {{\footnotesize $G$}};
\node (x1) at (1,14) [vertexL] {};
\node (x2) at (1,10) [vertexL] {};
\node (x3) at (1,6) [vertexL] {};
\node (x4) at (1,2) [vertexL] {};
\draw [line width=0.02cm] (x1) to (x2);
\draw [line width=0.02cm] (x2) to (x3);
\draw [line width=0.02cm] (x3) to (x4);
\draw [line width=0.02cm] (x1) to [out=240, in=120] (x4);
\end{tikzpicture} & 
\tikzstyle{vertexL}=[circle,draw, top color=gray!5, bottom color=gray!30,minimum size=12pt, scale=0.7, inner sep=0.1pt]
\begin{tikzpicture}[scale=0.3]
\draw (0,-2) node {\mbox{ }};
\draw (2,16.5) node {\mbox{ }};
\draw (1,-1) node {{\footnotesize $G'$}};
\node (x1) at (1,16) [vertexL] {$v_2$};
\node (x2) at (1,13) [vertexL] {$v_5$};
\node (x3) at (1,10) [vertexL] {$v_3$};
\node (x4) at (1,7) [vertexL] {$v_6$};
\node (x5) at (1,4) [vertexL] {$v_1$};
\node (x6) at (1,1) [vertexL] {$v_4$};
\draw [line width=0.02cm] (x1) to (x2);
\draw [line width=0.02cm] (x2) to (x3);
\draw [line width=0.02cm] (x3) to (x4);
\draw [line width=0.02cm] (x1) to [out=240, in=120] (x4);
\end{tikzpicture} & 
\tikzstyle{vertexL}=[circle,draw, top color=gray!5, bottom color=gray!30,minimum size=12pt, scale=0.7, inner sep=0.1pt]
\begin{tikzpicture}[scale=0.3]
\draw (0,-2) node {\mbox{ }};
\draw (7,16.5) node {\mbox{ }};
\draw (3.5,-1) node {{\footnotesize $H_1^G$}};
\node (x1) at (1,16) [vertexL] {$v_2^1$};
\node (x2) at (1,13) [vertexL] {$v_5^1$};
\node (x3) at (1,10) [vertexL] {$v_3^1$};
\node (x4) at (1,7) [vertexL] {$v_6^1$};
\node (x5) at (1,4) [vertexL] {$v_1^1$};
\node (x6) at (1,1) [vertexL] {$v_4^1$};
\node (y1) at (6,16) [vertexL] {$v_2^2$};
\node (y2) at (6,13) [vertexL] {$v_5^2$};
\node (y3) at (6,10) [vertexL] {$v_3^2$};
\node (y4) at (6,7) [vertexL] {$v_6^2$};
\node (y5) at (6,4) [vertexL] {$v_1^2$};
\node (y6) at (6,1) [vertexL] {$v_4^2$};

\draw [line width=0.02cm] (x1) to (x2);
\draw [line width=0.02cm] (x2) to (x3);
\draw [line width=0.02cm] (x3) to (x4);
\draw [line width=0.02cm] (x1) to [out=240, in=120] (x4);

\draw [line width=0.02cm] (y1) to (y2);
\draw [line width=0.02cm] (y2) to (y3);
\draw [line width=0.02cm] (y3) to (y4);
\draw [line width=0.02cm] (y1) to [out=300, in=60] (y4);

\draw [line width=0.02cm] (x1) to (y1);
\draw [line width=0.02cm] (x2) to (y2);
\draw [line width=0.02cm] (x3) to (y3);
\draw [line width=0.02cm] (x4) to (y4);
\draw [line width=0.02cm] (x5) to (y5);
\draw [line width=0.02cm] (x6) to (y6);
\draw [line width=0.02cm] (x1) to (y2);
\draw [line width=0.02cm] (x1) to (y4);
\draw [line width=0.02cm] (x2) to (y1);
\draw [line width=0.02cm] (x2) to (y3);
\draw [line width=0.02cm] (x3) to (y2);
\draw [line width=0.02cm] (x3) to (y4);
\draw [line width=0.02cm] (x4) to (y1);
\draw [line width=0.02cm] (x4) to (y3);
\end{tikzpicture} &
\tikzstyle{vertexL}=[circle,draw, top color=gray!5, bottom color=gray!30,minimum size=12pt, scale=0.7, inner sep=0.1pt]
\tikzstyle{vertexB}=[circle,draw, top color=gray!5, bottom color=gray!30,minimum size=5pt, scale=0.3, inner sep=0.05pt]

\begin{tikzpicture}[scale=0.3]
\draw (0,-3) node {\mbox{ }};
\draw (7,16.5) node {\mbox{ }};
\draw (3.5,-1) node {{\footnotesize $H_2^G$ (where the thin}};
\draw (3.5,-2.2) node {{\footnotesize edges are subdivided)}};

\node (x1) at (1,16) [vertexL] {$v_2^1$};
\node (x2) at (1,13) [vertexL] {$v_5^1$};
\node (x3) at (1,10) [vertexL] {$v_3^1$};
\node (x4) at (1,7) [vertexL] {$v_6^1$};
\node (x5) at (1,4) [vertexL] {$v_1^1$};
\node (x6) at (1,1) [vertexL] {$v_4^1$};
\node (y1) at (6,16) [vertexL] {$v_2^2$};
\node (y2) at (6,13) [vertexL] {$v_5^2$};
\node (y3) at (6,10) [vertexL] {$v_3^2$};
\node (y4) at (6,7) [vertexL] {$v_6^2$};
\node (y5) at (6,4) [vertexL] {$v_1^2$};
\node (y6) at (6,1) [vertexL] {$v_4^2$};

\draw [line width=0.05cm] (x1) to (x2);
\draw [line width=0.05cm] (x2) to (x3);
\draw [line width=0.05cm] (x3) to (x4);
\draw [line width=0.05cm] (x1) to [out=240, in=120] (x4);

\draw [line width=0.05cm] (y1) to (y2);
\draw [line width=0.05cm] (y2) to (y3);
\draw [line width=0.05cm] (y3) to (y4);
\draw [line width=0.05cm] (y1) to [out=300, in=60] (y4);

\draw [line width=0.05cm] (x1) to (y1);
\draw [line width=0.05cm] (x2) to (y2);
\draw [line width=0.05cm] (x3) to (y3);
\draw [line width=0.05cm] (x4) to (y4);
\draw [line width=0.05cm] (x5) to (y5);
\draw [line width=0.05cm] (x6) to (y6);

\draw [line width=0.05cm] (x1) to (y2);
\draw [line width=0.05cm] (x1) to (y4);
\draw [line width=0.05cm] (x2) to (y1);
\draw [line width=0.05cm] (x2) to (y3);
\draw [line width=0.05cm] (x3) to (y2);
\draw [line width=0.05cm] (x3) to (y4);
\draw [line width=0.05cm] (x4) to (y1);
\draw [line width=0.05cm] (x4) to (y3);

\draw [line width=0.01cm] (x1) to (y3);
\draw [line width=0.01cm] (x2) to (y4);
\draw [line width=0.01cm] (x3) to (y1);
\draw [line width=0.01cm] (x4) to (y2);

\draw [line width=0.01cm] (x1) to (y5);
\draw [line width=0.01cm] (x2) to (y5);
\draw [line width=0.01cm] (x3) to (y5);
\draw [line width=0.01cm] (x4) to (y5);
\draw [line width=0.01cm] (x6) to (y5);

\draw [line width=0.01cm] (x1) to (y6);
\draw [line width=0.01cm] (x2) to (y6);
\draw [line width=0.01cm] (x3) to (y6);
\draw [line width=0.01cm] (x4) to (y6);
\draw [line width=0.01cm] (x5) to (y6);

\draw [line width=0.01cm] (x5) to (y1);
\draw [line width=0.01cm] (x5) to (y2);
\draw [line width=0.01cm] (x5) to (y3);
\draw [line width=0.01cm] (x5) to (y4);

\draw [line width=0.01cm] (x6) to (y1);
\draw [line width=0.01cm] (x6) to (y2);
\draw [line width=0.01cm] (x6) to (y3);
\draw [line width=0.01cm] (x6) to (y4);
\end{tikzpicture} & 
\tikzstyle{vertexL}=[circle,draw, top color=gray!5, bottom color=gray!30,minimum size=12pt, scale=0.7, inner sep=0.1pt]
\tikzstyle{vertexB}=[circle,draw, top color=gray!5, bottom color=gray!30,minimum size=10pt, scale=0.6, inner sep=0.1pt]

\begin{tikzpicture}[scale=0.3]
\draw (0,-3) node {\mbox{ }};
\draw (7,16.5) node {\mbox{ }};
\draw (3.5,-1) node {{\footnotesize $H_3^G$ (where the thin}};
\draw (3.5,-2.2) node {{\footnotesize edges are subdivided,}};
\draw (3.5,-3.4) node {{\footnotesize and pendent edges are}};
\draw (3.5,-4.6) node {{\footnotesize added to the subdivision vertices)}};

\node (x1) at (1,16) [vertexL] {$v_2^1$};
\node (x2) at (1,13) [vertexL] {$v_3^1$};
\node (x3) at (1,10) [vertexL] {$v_5^1$};
\node (x4) at (1,7) [vertexL] {$v_6^1$};
\node (x5) at (1,4) [vertexL] {$v_1^1$};
\node (x6) at (1,1) [vertexL] {$v_4^1$};
\node (y1) at (6,16) [vertexL] {$v_2^2$};
\node (y2) at (6,13) [vertexL] {$v_5^2$};
\node (y3) at (6,10) [vertexL] {$v_3^2$};
\node (y4) at (6,7) [vertexL] {$v_6^2$};
\node (y5) at (6,4) [vertexL] {$v_1^2$};
\node (y6) at (6,1) [vertexL] {$v_4^2$};

\node (a1) at (-1,16) [vertexB] {};
\node (a2) at (-1,13) [vertexB] {};
\node (a3) at (-1,10) [vertexB] {};
\node (a4) at (-1,7) [vertexB] {};
\node (a6) at (-1,1) [vertexB] {};
\node (b1) at (8,16) [vertexB] {};
\node (b2) at (8,13) [vertexB] {};
\node (b3) at (8,10) [vertexB] {};
\node (b4) at (8,7) [vertexB] {};
\node (b5) at (8,4) [vertexB] {};

\draw [line width=0.05cm] (a1) to (x1);
\draw [line width=0.05cm] (a2) to (x2);
\draw [line width=0.05cm] (a3) to (x3);
\draw [line width=0.05cm] (a4) to (x4);
\draw [line width=0.05cm] (a6) to (x6);

\draw [line width=0.05cm] (b1) to (y1);
\draw [line width=0.05cm] (b2) to (y2);
\draw [line width=0.05cm] (b3) to (y3);
\draw [line width=0.05cm] (b4) to (y4);
\draw [line width=0.05cm] (b5) to (y5);

\draw [line width=0.05cm] (x1) to (x2);
\draw [line width=0.05cm] (x2) to (x3);
\draw [line width=0.05cm] (x3) to (x4);
\draw [line width=0.05cm] (x1) to [out=250, in=110] (x4);

\draw [line width=0.05cm] (y1) to (y2);
\draw [line width=0.05cm] (y2) to (y3);
\draw [line width=0.05cm] (y3) to (y4);
\draw [line width=0.05cm] (y1) to [out=290, in=70] (y4);

\draw [line width=0.05cm] (x1) to (y1);
\draw [line width=0.05cm] (x2) to (y2);
\draw [line width=0.05cm] (x3) to (y3);
\draw [line width=0.05cm] (x4) to (y4);
\draw [line width=0.05cm] (x5) to (y5);
\draw [line width=0.05cm] (x6) to (y6);

\draw [line width=0.05cm] (x1) to (y2);
\draw [line width=0.05cm] (x1) to (y4);
\draw [line width=0.05cm] (x2) to (y1);
\draw [line width=0.05cm] (x2) to (y3);
\draw [line width=0.05cm] (x3) to (y2);
\draw [line width=0.05cm] (x3) to (y4);
\draw [line width=0.05cm] (x4) to (y1);
\draw [line width=0.05cm] (x4) to (y3);

\draw [line width=0.01cm] (x1) to (y3);
\draw [line width=0.01cm] (x2) to (y4);
\draw [line width=0.01cm] (x3) to (y1);
\draw [line width=0.01cm] (x4) to (y2);

\draw [line width=0.01cm] (x1) to (y5);
\draw [line width=0.01cm] (x2) to (y5);
\draw [line width=0.01cm] (x3) to (y5);
\draw [line width=0.01cm] (x4) to (y5);
\draw [line width=0.01cm] (x6) to (y5);

\draw [line width=0.01cm] (x1) to (y6);
\draw [line width=0.01cm] (x2) to (y6);
\draw [line width=0.01cm] (x3) to (y6);
\draw [line width=0.01cm] (x4) to (y6);
\draw [line width=0.01cm] (x5) to (y6);

\draw [line width=0.01cm] (x5) to (y1);
\draw [line width=0.01cm] (x5) to (y2);
\draw [line width=0.01cm] (x5) to (y3);
\draw [line width=0.01cm] (x5) to (y4);

\draw [line width=0.01cm] (x6) to (y1);
\draw [line width=0.01cm] (x6) to (y2);
\draw [line width=0.01cm] (x6) to (y3);
\draw [line width=0.01cm] (x6) to (y4);

\end{tikzpicture}  \\ \hline
\end{tabular}
\end{center}
\caption{An illustration of $G$, $G'$, $H_1^G$, $H_2^G$ and $H_3^G$ when $G$ is a $4$-cycle; $k=4$. } \label{illustrationH}
\end{figure}

\AY{
Theorem \ref{ThmW} follows from the fact that {\sc Independent Set}
is {\sf W}[1]-complete \cite{Cygan+,downey2013} and that the mapping
of $G$ with parameter $k-2$ to $H_3^G$ with parameter $3k-3$ is a
parameterized reduction from {\sc Independent Set} to {\sc Perfect
Forest Above Perfect Matching}.
The proof of equivalence between (a) and (d) in the following theorem
and the definition of $H_3^G$ show that the mapping is indeed a
parameterized reduction.

See furthermore Figure~\ref{illustrationHii} for an example that illustrates the proof of Theorem~\ref{thmX}.
}

\begin{theorem} \label{thmX}
The following statements are equivalent. 

\begin{description}
 \item[(a)] $G$ contains an independent set of size $k-2$.
 \item[(b)] $G'$ contains an independent set of size $k$.
 \item[(c)] $H_2^G$ contains an induced $(v_1^1,v_k^2)$-path in $H_2^G$ of length $3k-2$.
 \item[(d)] $H_3^G$ contains a $0$-perfect forest with at least $\frac{|V(H_3^G(a,b))|}{2} + 3k-3$  edges.
\end{description}
\end{theorem}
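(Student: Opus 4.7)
My plan is to close the cycle (a)$\Leftrightarrow$(b)$\Rightarrow$(c)$\Rightarrow$(d)$\Rightarrow$(b). The equivalence (a)$\Leftrightarrow$(b) is immediate: $G'$ is obtained from $G$ by adjoining two isolated vertices $v_1,v_k$, so an independent set of $G$ of size $k-2$ extends to one of $G'$ of size $k$ by adjoining $v_1,v_k$, and conversely.

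For (b)$\Rightarrow$(c), I would pick an independent set $\{a_1=1,a_2,\ldots,a_{k-1},a_k=k\}$ of $G'$ (available since $v_1,v_k$ are isolated) and form the zig-zag sequence
\[
P \;=\; v_{a_1}^{1},\, v_{a_1}^{2},\, w_{a_2,a_1},\, v_{a_2}^{1},\, v_{a_2}^{2},\, w_{a_3,a_2},\, v_{a_3}^{1},\, v_{a_3}^{2},\, \ldots,\, w_{a_k,a_{k-1}},\, v_{a_k}^{1},\, v_{a_k}^{2}.
\]
It has $3k-1$ vertices and $3k-2$ edges; each $w_{a_{i+1},a_i}$ exists because $v_{a_i}v_{a_{i+1}}\notin E(G')$. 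Induced-ness in $H_2^G$ follows because any chord between two main vertices of $P$ with distinct indices $a_i,a_j$ would demand $v_{a_i}v_{a_j}\in E(G')$, contradicting independence. For (c)$\Rightarrow$(d), set $F=E(P)\cup E_P^G$. A direct degree check gives all odd degrees in $H_3^G$ ($v_1^1,v_k^2$ get $1$; internal path vertices get $3$; off-path $H_2^G$-vertices and pendant leaves get $1$); each component is induced in $H_3^G$ (the caterpillar around $P$ or a pendant pair $\{v,p_v\}$ for $v\notin V(P)$); and the edge count is $(3k-2)+(|V(H_2^G)|-2)=|V(H_3^G)|/2+3k-3$.

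The crux is (d)$\Rightarrow$(b). Each pendant $p_v$ has degree $1$ in $H_3^G$, so any $0$-perfect forest $F$ must contain all of $E_P^G$; put $F'=F\setminus E_P^G$. The parity conditions on $F$ translate to: $v_1^1,v_k^2$ have odd degree in $F'$ while every other $H_2^G$-vertex has even degree. Because $F'$ is a forest with exactly two odd-degree vertices, both must lie in its unique non-trivial component $T^*$ (a tree has an even number of odd-degree vertices). The only leaves of $T^*$ are its odd-degree vertices, and the remaining vertices have even degree $\ge 2$, hence exactly $2$; so $T^*$ is a path $P^*$ from $v_1^1$ to $v_k^2$, induced in $H_2^G$ because each tree of $F$ is induced in $H_3^G$, of length $|F'|=|F|-(|V(H_2^G)|-2)\ge 3k-2$.

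It remains to extract an independent set of $G'$ of size $k$ from $P^*$. Partition its main vertices into maximal \emph{runs} of consecutive path-vertices sharing a common index $j$; each index appears in at most one run, of size $1$ or $2$, for otherwise the twin edge $v_j^1v_j^2$ is a chord. Let $I$ be the set of these indices and write $p,q$ for the number of size-$2$ and size-$1$ runs; set $m=2p+q$, $l=p+q$, and classify each transition between consecutive runs as \emph{direct} (one edge) or \emph{$w$-type} (via a subdivision vertex). The key structural claim---and the main technical obstacle---is that a direct transition can occur only when \emph{both} adjacent runs have size $1$: if either adjacent run has size $2$, the $G'$-edge $v_{i_j}v_{i_{j+1}}$ required for a direct transition between distinct indices forces additional $H_1^G$-edges among the $3$ or $4$ main vertices of those two runs, creating a chord of $P^*$. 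Consequently, the $2$-run indices form an independent set of $G'[I]$ of size $p$ with no $G'$-neighbor in $I$, and the $1$-run subgraph of $G'[I]$ is a disjoint union of paths with exactly $d$ edges (one per direct transition). Standard bounds on independence numbers of such graphs give
\[
\alpha(G'[I]) \;\ge\; \max\bigl(l-d,\ \lceil m/2\rceil\bigr).
\]
Substituting $d=2l+p-|E(P^*)|-2$ (so $l-d=|E(P^*)|+2-m$) and using $|E(P^*)|\ge 3k-2$, one checks that for every admissible value of $m$ the maximum is at least $(|E(P^*)|+2)/3\ge k$, producing the desired independent set and establishing (b).
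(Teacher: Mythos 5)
Your proposal is correct, and most of it coincides with the paper's own argument: the trivial equivalence (a)$\Leftrightarrow$(b), the explicit zig-zag path witnessing (b)$\Rightarrow$(c), and the pendant-edge bookkeeping between (c) and (d) (all of $E_P^G$ is forced into any $0$-perfect forest, and what remains must be a single induced $(v_1^1,v_k^2)$-path because a nontrivial tree has at least two odd-degree vertices) are exactly the paper's steps. Where you genuinely diverge is in extracting an independent set from the induced path. The paper argues positionally: it takes a maximum set $R$ of pairwise non-consecutive vertices of $V(P)\cap V(H_1^G)$ containing $p_1$, observes that inducedness of $P$ makes $R$ independent in $H_2^G$ (so the corresponding indices are independent in $G'$), and shows such a vertex can be found within every three consecutive positions, giving $|R|\ge k$ directly from the length $3k-2$. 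You instead perform a run decomposition, prove that $G'[I]$ is precisely a disjoint union of paths on the singleton-run indices (one edge per direct transition) together with isolated vertices for the size-$2$-run indices, and close with the bound $\max(l-d,\lceil m/2\rceil)\ge \frac{1}{3}(l-d)+\frac{2}{3}\cdot\frac{m}{2}=\frac{|E(P^*)|+2}{3}\ge k$. Your structural claims check out (a direct transition touching a size-$2$ run, or any $G'$-edge incident to a size-$2$-run index, forces a chord since $v_j^1$ and $v_j^2$ have the same closed neighbourhood), as do the edge count $|E(P^*)|=p+2l-2-d$ and the two independence bounds. Your route is longer and requires the case analysis on run sizes and transition types, but it yields the exact structure of $G'[I]$; the paper's greedy every-third-vertex selection is shorter and needs no such analysis.
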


\begin{proof}
Clearly (a) and (b) are equivalent as $v_1$ and $v_k$ are isolated vertices in $G'$ and $G = G' - \{v_1,v_k\}$.

\2

We will now show that (b) and (c) are equivalent.
First assume that there is an independent set of size $k$ in $G'$. Without loss of generality we may assume that 
$\{v_1,v_2,\ldots,v_k\}$ is an independent set in $G'$ as any maximum independent set in $G'$ contains $v_1$ and $v_k$. 
The following is an induced $(v_1^1,v_k^2)$-path of length $3k-2$ in $H_2^G$.

\[
 v_1^1   \SpL{}
 v_1^2   \SpL{}
 w_{2,1} \SpL{}
 v_2^1   \SpL{}
 v_2^2   \SpL{}
 w_{3,2} \SpL{}
 v_3^1   \SpL{}
 v_3^2   \SpL{}
 w_{4,3} \SpL{} \cdots  \SpL{}
 v_{k-1}^2  \SpL{}
 w_{k,k-1}  \SpL{}
 v_k^1   \SpL{}
 v_k^2   
\]

Conversely assume that $P=p_1 p_2 p_3 \ldots p_{3k-2}$ is an induced $(v_1^1,v_k^2)$-path in $H_2^G$ 
of length $3k-2$. Let $R$ be a maximum subset of \GG{$V(P) \cap V(H_1^G)$} 
such that each pair of vertices in $R$ are non-adjacent in $P$ and $p_1 \in R$.
As $R \subseteq V(H_1^G)$ we may name the vertices in $R$ as follows $R=\{v_{a_1}^{b_1}, v_{a_2}^{b_1}, \ldots, v_{a_r}^{b_r}\}$,
where $r = |R|$.   Let $R^* = \{v_{a_1}, v_{a_2}, \ldots, v_{a_r}\}$.
As $P$ is an induced path and no adjacent vertices in $P$ belong to $R$ we note that $R$ is an independent set in $H_2^G$
and therefore $a_1, a_2, \ldots, a_r$ are $r$ distinct indices and $R^*$ is an independent set in $G'$ of size $r$. 

Let $R = \{p_{q_1},p_{q_2},\ldots, p_{q_r}\}$, where $1=q_1 < q_2 < \cdots < q_r$. 
Note that $q_2 \leq 4$, as $q_2=3$ if $p_3 \in V(H_1^G)$ and $q_2=4$ otherwise.
Analogously $q_3 \leq 7$, $q_4 \leq 10$, etc.. This implies that $q_i \leq 3i-2$ for all $i \in [r]$ and therefore
$r \geq k$ (as $q_k \leq 3k-2$).  This implies that $R^*$ is an independent set in $G$ of size at least $k$, which 
implies that (b) and (c) are equivalent.

\2

We will now show that (c) and (d) are equivalent.
Assume that $F$ is a $0$-perfect forest in $H_3^G$ with at least $\frac{|V(H_3^G)|}{2} + 3k-3$ edges. 
Clearly all the pendent edges, $E_P^G$, added to $H_2^G$ in order to create $H_3^G$ belong to $F$. 
Note that $|E_P^G|=\frac{|V(H_3^G)|}{2} -1$.
Furthermore $F-E_P^G$ is an induced forest in $H_2^G$ 
where all vertices have even degree except $v_1^1$ and $v_k^2$ which both have odd degree 
(as they are the only vertices not adjacent to a pendent edge from $E_P^G$). 
This implies that $F-E_P^G$ is an induced $(v_1^1,v_k^2)$-path, $P$, and a number of isolated vertices.
As $|E(F)| \geq \frac{|V(H_3^G)|}{2} + 3k-3$ we note that $|E(P)| = |E(F)|-|E_P^Q| \geq 3k-2$.
Therefore if (d) holds then (c) holds.

Conversely, assume that (c) holds and there exists an induced $(v_1^1,v_k^2)$-path, $P'$, in $H_2^G$ of length at least $3k-2$.
Adding all pendent edges $E_P^Q$ to $P'$ gives us a $0$-perfect forest with $\frac{|V(H_3^G)|}{2} - 1 + |E(P')|
\geq \frac{|V(H_3^G)|}{2} + 3k-3$ edges. Therefore if (c) holds then (d) holds. 
Therefore (a), (b), (c) and (d) either all hold of none of them holds.
\end{proof}


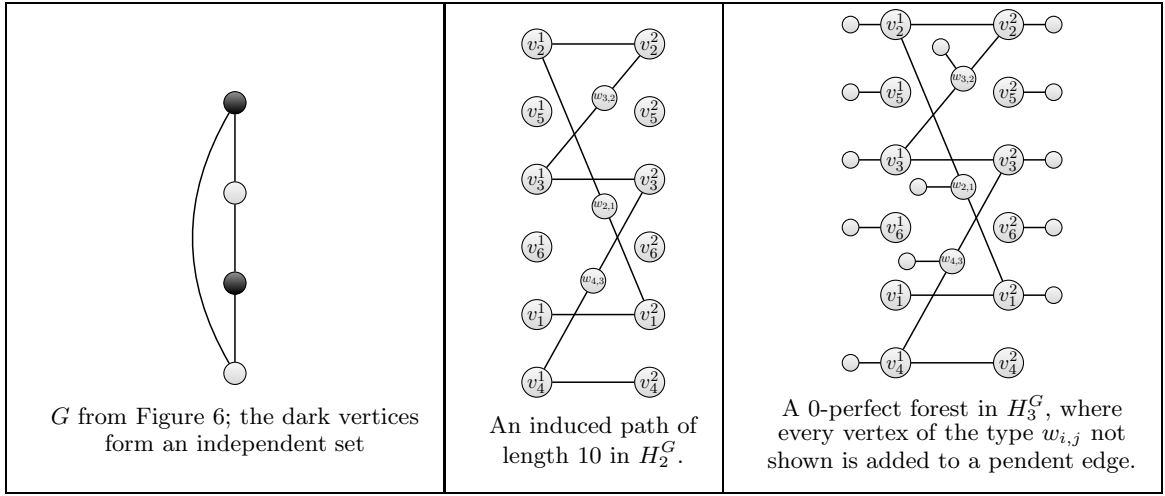
\begin{figure}[hbtp]
\begin{center}
\begin{tabular}{|c|c|c|} \hline
\tikzstyle{vertexI}=[circle,draw, top color=black!40, bottom color=black!99,minimum size=12pt, scale=0.7, inner sep=0.1pt]
\tikzstyle{vertexL}=[circle,draw, top color=gray!5, bottom color=gray!30,minimum size=12pt, scale=0.7, inner sep=0.1pt]
\begin{tikzpicture}[scale=0.30]
\draw (0,-1) node {\mbox{ }};
\draw (2,14.5) node {\mbox{ }};
\draw (1,0) node {{\footnotesize $G$ from Figure \ref{illustrationH}; the dark vertices}};
\draw (1,-1.2) node {{\footnotesize  form an independent set}};
\draw (1,-2.4) node {{\footnotesize }};

\node (x1) at (1,14) [vertexI] {};
\node (x2) at (1,10) [vertexL] {};
\node (x3) at (1,6) [vertexI] {};
\node (x4) at (1,2) [vertexL] {};
\draw [line width=0.02cm] (x1) to (x2);
\draw [line width=0.02cm] (x2) to (x3);
\draw [line width=0.02cm] (x3) to (x4);
\draw [line width=0.02cm] (x1) to [out=240, in=120] (x4);
\end{tikzpicture} & 
\tikzstyle{vertexL}=[circle,draw, top color=gray!5, bottom color=gray!30,minimum size=14pt, scale=0.7, inner sep=0.1pt]
\tikzstyle{vertexB}=[circle,draw, top color=gray!5, bottom color=gray!30,minimum size=14pt, scale=0.45, inner sep=0.05pt]
\begin{tikzpicture}[scale=0.3]
\draw (0,-3) node {\mbox{ }};
\draw (7,16.5) node {\mbox{ }};
\draw (3.5,-1) node {{\footnotesize An induced path of}};
\draw (3.5,-2.2) node {{\footnotesize length 10 in $H_2^G$. }};

\node (w63) at (3.5,5.5) [vertexB] {$w_{4,3}$};
\node (w15) at (4,8.8) [vertexB] {$w_{2,1}$};
\node (w31) at (4,13.6) [vertexB] {$w_{3,2}$};

\node (x1) at (1,16) [vertexL] {$v_2^1$};
\node (x2) at (1,13) [vertexL] {$v_5^1$};
\node (x3) at (1,10) [vertexL] {$v_3^1$};
\node (x4) at (1,7) [vertexL] {$v_6^1$};
\node (x5) at (1,4) [vertexL] {$v_1^1$};
\node (x6) at (1,1) [vertexL] {$v_4^1$};
\node (y1) at (6,16) [vertexL] {$v_2^2$};
\node (y2) at (6,13) [vertexL] {$v_5^2$};
\node (y3) at (6,10) [vertexL] {$v_3^2$};
\node (y4) at (6,7) [vertexL] {$v_6^2$};
\node (y5) at (6,4) [vertexL] {$v_1^2$};
\node (y6) at (6,1) [vertexL] {$v_4^2$};

\draw [line width=0.02cm] (x1) to (y1); 
\draw [line width=0.02cm] (x3) to (y3); 
\draw [line width=0.02cm] (x5) to (y5);  
\draw [line width=0.02cm] (x6) to (y6);  

\draw [line width=0.02cm] (x3) to (w31);  \draw [line width=0.02cm] (w31) to (y1); 
\draw [line width=0.02cm] (x1) to (w15);  \draw [line width=0.02cm] (w15) to (y5); 
\draw [line width=0.02cm] (x6) to (w63);  \draw [line width=0.02cm] (w63) to (y3); 
\end{tikzpicture} & 
\tikzstyle{vertexL}=[circle,draw, top color=gray!5, bottom color=gray!30,minimum size=14pt, scale=0.7, inner sep=0.1pt]
\tikzstyle{vertexB}=[circle,draw, top color=gray!5, bottom color=gray!30,minimum size=14pt, scale=0.45, inner sep=0.05pt]
\begin{tikzpicture}[scale=0.3]
\draw (0,-3) node {\mbox{ }};
\draw (7,16.5) node {\mbox{ }};
\draw (3.5,-1) node {{\footnotesize A $0$-perfect forest in $H_3^G$, where}};
\draw (3.5,-2.2) node {{\footnotesize  every vertex of the type $w_{i,j}$ not}};
\draw (3.5,-3.4) node {{\footnotesize shown is added to a pendent edge.}};

\node (w63) at (3.5,5.5) [vertexB] {$w_{4,3}$};
\node (w15) at (4,8.8) [vertexB] {$w_{2,1}$};
\node (w31) at (4,13.6) [vertexB] {$w_{3,2}$};

\node (p63) at (1.5,5.5) [vertexB] {};
\node (p15) at (2,8.8) [vertexB] {};
\node (p31) at (3,15) [vertexB] {};

\draw [line width=0.02cm] (w63) to (p63);
\draw [line width=0.02cm] (w15) to (p15);
\draw [line width=0.02cm] (w31) to (p31);

\node (x1) at (1,16) [vertexL] {$v_2^1$};
\node (x2) at (1,13) [vertexL] {$v_5^1$};
\node (x3) at (1,10) [vertexL] {$v_3^1$};
\node (x4) at (1,7) [vertexL] {$v_6^1$};
\node (x5) at (1,4) [vertexL] {$v_1^1$};
\node (x6) at (1,1) [vertexL] {$v_4^1$};
\node (y1) at (6,16) [vertexL] {$v_2^2$};
\node (y2) at (6,13) [vertexL] {$v_5^2$};
\node (y3) at (6,10) [vertexL] {$v_3^2$};
\node (y4) at (6,7) [vertexL] {$v_6^2$};
\node (y5) at (6,4) [vertexL] {$v_1^2$};
\node (y6) at (6,1) [vertexL] {$v_4^2$};

\draw [line width=0.02cm] (x1) to (y1); 
\draw [line width=0.02cm] (x3) to (y3); 
\draw [line width=0.02cm] (x5) to (y5);  
\draw [line width=0.02cm] (x6) to (y6);  
\draw [line width=0.02cm] (x3) to (w31);  \draw [line width=0.02cm] (w31) to (y1); 
\draw [line width=0.02cm] (x1) to (w15);  \draw [line width=0.02cm] (w15) to (y5); 
\draw [line width=0.02cm] (x6) to (w63);  \draw [line width=0.02cm] (w63) to (y3); 

\node (a1) at (-1,16) [vertexB] {};
\node (a2) at (-1,13) [vertexB] {};
\node (a3) at (-1,10) [vertexB] {};
\node (a4) at (-1,7) [vertexB] {};
\node (a6) at (-1,1) [vertexB] {};
\node (b1) at (8,16) [vertexB] {};
\node (b2) at (8,13) [vertexB] {};
\node (b3) at (8,10) [vertexB] {};
\node (b4) at (8,7) [vertexB] {};
\node (b5) at (8,4) [vertexB] {};

\draw [line width=0.02cm] (a1) to (x1);
\draw [line width=0.02cm] (a2) to (x2);
\draw [line width=0.02cm] (a3) to (x3);
\draw [line width=0.02cm] (a4) to (x4);
\draw [line width=0.02cm] (a6) to (x6);

\draw [line width=0.02cm] (b1) to (y1);
\draw [line width=0.02cm] (b2) to (y2);
\draw [line width=0.02cm] (b3) to (y3);
\draw [line width=0.02cm] (b4) to (y4);
\draw [line width=0.02cm] (b5) to (y5);

\end{tikzpicture}  \\ \hline
\end{tabular}
\end{center}
\caption{An illustration of an independent set of size two in $G$, an induced $(v_1^1,v_4^2)$-path in 
$H_2^G$ and a $0$-perfect forest in $H_3^G$. Note that not all vertices in $H_3^G$ are depicted and
every vertex of the type $w_{i,j}$ which is not in the picture is added to a pendent edge from $E_P^G$.
However, these edges are, for the clarity of the picture, not shown.} \label{illustrationHii}
\end{figure}

\section{Proof of Theorem \ref{ZeroPerfectForestContainingEdge}}\label{sec:ince}

To prove Theorem \ref{ZeroPerfectForestContainingEdge}, we will use the following result.
The proof of Theorem \ref{inducedCycle} follows the same approach as the proof that it is 
{\sf NP}-hard to determine whether there is an induced cycle of odd length through a prescribed vertex, given in \cite{DB91} by Bienstock.

\begin{theorem} \label{inducedCycle} 
It is {\sf NP}-hard to determine whether a graph contains an induced cycle through two given edges.
\end{theorem}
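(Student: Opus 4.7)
The plan is to reduce from 3-SAT in the style of Bienstock's argument in \cite{DB91}. Given a 3-CNF formula $\varphi$ with variables $x_1,\ldots,x_n$ and clauses $C_1,\ldots,C_m$, I would build a graph $G_\varphi$ together with two distinguished edges $e_1,e_2$ such that $\varphi$ is satisfiable if and only if $G_\varphi$ has an induced cycle using both $e_1$ and $e_2$.

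For each variable $x_i$, introduce a variable gadget $V_i$ with entry vertex $\ell_i$ and exit vertex $r_i$ consisting of two internally disjoint induced paths $P_i^+$ and $P_i^-$ between $\ell_i$ and $r_i$, where $P_i^+$ contains a designated \emph{literal vertex} for every occurrence of $x_i$ in a clause and $P_i^-$ does the same for $\overline{x_i}$. Chain the gadgets by adding an edge from $r_i$ to $\ell_{i+1}$. For each clause $C_j=(\lambda_{j,1}\vee\lambda_{j,2}\vee\lambda_{j,3})$, introduce a clause vertex $c_j$ adjacent to precisely the three literal vertices corresponding to $\lambda_{j,1},\lambda_{j,2},\lambda_{j,3}$. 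Finally add two terminals $s,t$, set $e_1 = s\ell_1$ and $e_2 = tr_n$, and add an $(s,t)$-path $R$ whose interior passes through every clause vertex $c_j$ (with fresh degree-two connector vertices between consecutive $c_j$'s so that $R$ is chord-free relative to the rest of the construction).

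The intended correspondence is as follows: any induced cycle through $e_1$ and $e_2$ must run $s \to \ell_1 \to \cdots \to r_n \to t$ through the variable gadgets and then close via $R$ through all the clause vertices. Because $\ell_i$ and $r_i$ are each cut-vertices between $V_{i-1}$ and $V_{i+1}$ in the cycle, the cycle must use exactly one of $P_i^+, P_i^-$ per gadget, thereby encoding a truth assignment $\tau$. A clause vertex $c_j$ is visited on the cycle by the $R$-portion between its two neighbours on $R$; the induced condition at $c_j$ then demands that at least one of its three literal-vertex neighbours be absent from the cycle, i.e.\ that the path \emph{not} chosen for some variable in $C_j$ is the one carrying the satisfying literal. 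This is exactly the condition that $\tau$ satisfies $C_j$. I would prove both directions: from a satisfying $\tau$, read off a cycle; from an induced cycle through $e_1,e_2$, read off $\tau$ and check that each clause is satisfied.

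The main obstacle is \emph{chord control}: I have to design the gadgets so that the only induced cycles through $e_1$ and $e_2$ are the structured ones above, with no unintended shortcuts. This is handled by subdividing each edge of the construction a large and uniform number of times (as in Bienstock's original argument) so that any would-be shortcut across a variable gadget, across two different clause vertices, or between a literal vertex and a non-adjacent part of its gadget is too long to induce a chord of the candidate cycle. A careful case analysis on where a putative chord could live, organised by whether its endpoints lie inside the same $V_i$, in different $V_i$'s, on $R$, or between $R$ and a variable gadget, will then show that after enough subdivision the only chord-free cycles through $e_1$ and $e_2$ are exactly the ones described, completing the reduction.
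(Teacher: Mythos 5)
Your high-level plan matches the paper's---reduce from 3-SAT in Bienstock's style, build variable gadgets with two tracks encoding truth assignments, attach clause vertices, and argue chord-freeness---but your clause gadget does not encode clause satisfaction, and this breaks the reduction.

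Concretely: you place a single clause vertex $c_j$ for each clause on a fixed $(s,t)$-path $R$ whose interior the cycle is forced to traverse in its entirety. So every $c_j$ lies on any candidate induced cycle through $e_1$ and $e_2$. You make $c_j$ adjacent to the three literal vertices of $C_j$, and you write that induced-ness ``demands that at least one of its three literal-vertex neighbours be absent from the cycle.'' That is not what induced-ness demands. An induced cycle has \emph{no} chords, so \emph{every} neighbour of $c_j$ other than its two $R$-neighbours must be off the cycle --- i.e.\ \emph{all three} literal vertices must be absent. Under the natural track-to-truth correspondence this requires all three literals of every clause to be true simultaneously, a conjunction of all literals rather than a disjunction. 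In fact, if some variable appears in both polarities across the formula, no choice of tracks can satisfy your constraint at all, regardless of whether $\varphi$ is satisfiable. Heavy subdivision controls spurious chords but cannot repair this, because the flaw is not a stray chord; it is that the intended cycles themselves do not correspond to satisfying assignments.

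The paper avoids this by giving each clause $C_j$ a $K_{2,3}$-gadget $R_j$ with hub vertices $a^j,b^j$ and one middle vertex $c_k^j$ per literal $k\in[3]$. The clause-path forces the cycle through $a^j$ and $b^j$, but the cycle then \emph{chooses} exactly one middle vertex $c_{k_j}^j$ to pass through, and only that chosen vertex's chords to the variable gadgets are constrained. This is precisely the existential choice needed to encode the disjunction. (The paper also routes the cycle through each variable gadget twice, via two synchronized parallel tracks linked by cross-edges $w_1^i\overline{w}_2^i$ and $\overline{w}_1^i w_2^i$, with each $c_k^j$ attached to both tracks; this is what enables the chord-freeness argument without relying on long subdivisions.) You would need to replace your single $c_j$ by a gadget of this kind before the reduction can be made to work.
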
 
\begin{proof}
We will reduce from the well-known {\sf NP}-hard 3-SAT problem. Let $I$ be an instance of 3-SAT with variables
$v_1,v_2,\ldots,v_n$ and clauses $C_1,C_2,\ldots,C_m$. We will construct a graph $G$ with two edges $e_1$ and $e_2$ such that
$G$ contains an induced cycle, $C$, with $e_1,e_2 \in E(C)$ if and only if $I$ is satisfiable.

We first create a gadget $H_i$ for each $i=1,2,\ldots,n$ as follows. 
Let $V(H_i) = \{x_1^i,w_1^i,\overline{w}_1^i,y_1^i, x_2^i,w_2^i,\overline{w}_2^i,y_2^i\}$ and let $E(H_i)$ be defined as follows
(see Figure~\ref{gadgetHi}):
$
E(H_i) = \{ x_j^i w_j^i, x_j^i \overline{w}_j^i, w_j^i y_j^i, \overline{w}_j^i y_j^i \; | \; j=1,2\} 
\cup \{w_1^i \overline{w}_2^i, \overline{w}_1^i w_2^i \}.
$

\begin{figure}[hbtp]
\begin{center}
\tikzstyle{vertexB}=[circle,draw, top color=gray!5, bottom color=gray!30,minimum size=12pt, scale=0.6, inner sep=0.1pt]
\tikzstyle{vertexL}=[circle,draw, minimum size=15pt, scale=0.6, inner sep=0.1pt]
\begin{tikzpicture}[scale=0.35]

\node (x11) at (1,9) [vertexL] {$x_1^i$};
\node (y11) at (6,9) [vertexL] {$y_1^i$};
\node (w11) at (4,11) [vertexL] {$w_1^i$};
\node (w11o) at (3,7) [vertexL] {$\overline{w}_1^i$};
\node (x21) at (1,3) [vertexL] {$x_2^i$};
\node (y21) at (6,3) [vertexL] {$y_2^i$};
\node (w21) at (3,5) [vertexL] {$w_2^i$};
\node (w21o) at (4,1) [vertexL] {$\overline{w}_2^i$};

\draw [line width=0.02cm] (x11) to (w11);
\draw [line width=0.02cm] (x11) to (w11o);
\draw [line width=0.02cm] (y11) to (w11);
\draw [line width=0.02cm] (y11) to (w11o);
\draw [line width=0.02cm] (x21) to (w21);
\draw [line width=0.02cm] (x21) to (w21o);
\draw [line width=0.02cm] (y21) to (w21);
\draw [line width=0.02cm] (y21) to (w21o);
\draw [line width=0.02cm] (w11) to (w21o);
\draw [line width=0.02cm] (w11o) to (w21);
\end{tikzpicture} \hspace{2cm}
\begin{tikzpicture}[scale=0.5]
\node (x13) at (17,2.5) [vertexL] {$x_1^i$};
\node (y13) at (23,2.5) [vertexL] {$y_1^i$};
\node (w13) at (17,5) [vertexL] {$w_1^i$};
\node (w13o) at (21,6) [vertexL] {$\overline{w}_1^i$};
\node (x23) at (18,1) [vertexL] {$x_2^i$};
\node (y23) at (21,1) [vertexL] {$y_2^i$};
\node (w23) at (19,6) [vertexL] {$w_2^i$};
\node (w23o) at (23,5) [vertexL] {$\overline{w}_2^i$};

\draw [line width=0.02cm] (x13) to (w13);
\draw [line width=0.02cm] (x13) to (w13o);
\draw [line width=0.02cm] (y13) to (w13);
\draw [line width=0.02cm] (y13) to (w13o);
\draw [line width=0.02cm] (x23) to (w23);
\draw [line width=0.02cm] (x23) to (w23o);
\draw [line width=0.02cm] (y23) to (w23);
\draw [line width=0.02cm] (y23) to (w23o);
\draw [line width=0.02cm] (w13) to (w23o);
\draw [line width=0.02cm] (w13o) to (w23);
\end{tikzpicture}

\end{center}
\caption{Two different drawings of $H_i$.} \label{gadgetHi}
\end{figure}
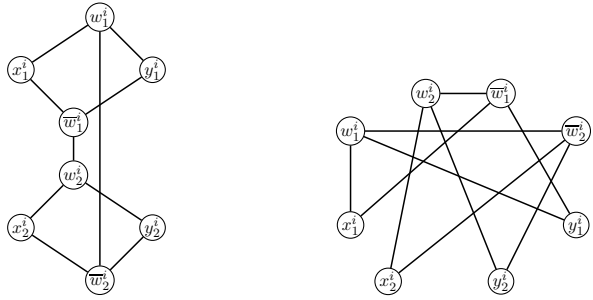

Let $Q_1$ be the graph with vertex set $V(Q_1)=\cup_{i=1}^n V(H_i)$ and the following edge set, where
$e_1 = x_1^1 x_2^1$.
\[
E(Q_1)= \{ e_1 \} \cup \{ y_1^i x_1^{i+1}, y_2^i x_2^{i+1} \; | \; i=1,2,\ldots,n-1 \} \cup \left(\cup_{i=1}^n E(H_i) \right)
\]
This is the part of the graph $G$ representing the variables of $I$. We now create a gadget $R_i$ for each $i=1,2,\ldots,m$
that will represent the clauses of $I$. Let $V(R_i) = \{ a^i, c_1^i, c_2^i, c_3^i, b^i \}$ and let $E(R_i)$ contain all 
edges between $\{a^i,b^i\}$  and $\{c_1^i, c_2^i, c_3^i \}$ (i.e. $R_i$ is isomorphic to $K_{2,3}$).
Let $Q_2$  be the graph with vertex set $V(Q_2)=\cup_{i=1}^m V(R_i)$ and the following edge set.
\[
E(Q_1)= \{ b^i a^{i+1} \; | \; i=1,2,\ldots,m-1 \} \cup \left(\cup_{i=1}^m E(R_i) \right)
\]
We now create $G$ by letting $V(G) = V(Q_1) \cup V(Q_2)$ and $E(G) = E(Q_1) \cup E(Q_2) \cup \{b^m y_1^n, y_2^n a^1 \}$.
Recall that $e_1 = x_1^1 x_2^1$ and let $e_2 = b^m y_1^n$. Se Figure~\ref{exampleII} for an illustration of $G$.
We will now show that $G$ contains an induced cycle, $C$, with $e_1,e_2 \in E(C)$ if and only if $I$ is satisfiable.

\begin{figure}[hbtp]
\begin{center}
\tikzstyle{vertexB}=[circle,draw, top color=gray!5, bottom color=gray!30,minimum size=12pt, scale=0.6, inner sep=0.1pt]
\tikzstyle{vertexL}=[circle,draw, minimum size=15pt, scale=0.6, inner sep=0.1pt]
\begin{tikzpicture}[scale=0.35]

\node (x11) at (1,2.5) [vertexL] {$x_1^1$};
\node (y11) at (7,2.5) [vertexL] {$y_1^1$};
\node (w11) at (1,5) [vertexL] {$w_1^1$};
\node (w11o) at (5,6) [vertexL] {$\overline{w}_1^1$};
\node (x21) at (2,1) [vertexL] {$x_2^1$};
\node (y21) at (5,1) [vertexL] {$y_2^1$};
\node (w21) at (3,6) [vertexL] {$w_2^1$};
\node (w21o) at (7,5) [vertexL] {$\overline{w}_2^1$};

\draw [line width=0.02cm] (x11) to (w11);
\draw [line width=0.02cm] (x11) to (w11o);
\draw [line width=0.02cm] (y11) to (w11);
\draw [line width=0.02cm] (y11) to (w11o);
\draw [line width=0.02cm] (x21) to (w21);
\draw [line width=0.02cm] (x21) to (w21o);
\draw [line width=0.02cm] (y21) to (w21);
\draw [line width=0.02cm] (y21) to (w21o);
\draw [line width=0.02cm] (w11) to (w21o);
\draw [line width=0.02cm] (w11o) to (w21);

\node (x12) at (9,2.5) [vertexL] {$x_1^2$};
\node (y12) at (15,2.5) [vertexL] {$y_1^2$};
\node (w12) at (9,5) [vertexL] {$w_1^2$};
\node (w12o) at (13,6) [vertexL] {$\overline{w}_1^2$};
\node (x22) at (10,1) [vertexL] {$x_2^2$};
\node (y22) at (13,1) [vertexL] {$y_2^2$};
\node (w22) at (11,6) [vertexL] {$w_2^2$};
\node (w22o) at (15,5) [vertexL] {$\overline{w}_2^2$};

\draw [line width=0.02cm] (x12) to (w12);
\draw [line width=0.02cm] (x12) to (w12o);
\draw [line width=0.02cm] (y12) to (w12);
\draw [line width=0.02cm] (y12) to (w12o);
\draw [line width=0.02cm] (x22) to (w22);
\draw [line width=0.02cm] (x22) to (w22o);
\draw [line width=0.02cm] (y22) to (w22);
\draw [line width=0.02cm] (y22) to (w22o);
\draw [line width=0.02cm] (w12) to (w22o);
\draw [line width=0.02cm] (w12o) to (w22);

\node (x13) at (17,2.5) [vertexL] {$x_1^3$};
\node (y13) at (23,2.5) [vertexL] {$y_1^3$};
\node (w13) at (17,5) [vertexL] {$w_1^3$};
\node (w13o) at (21,6) [vertexL] {$\overline{w}_1^3$};
\node (x23) at (18,1) [vertexL] {$x_2^3$};
\node (y23) at (21,1) [vertexL] {$y_2^3$};
\node (w23) at (19,6) [vertexL] {$w_2^3$};
\node (w23o) at (23,5) [vertexL] {$\overline{w}_2^3$};

\draw [line width=0.02cm] (x13) to (w13);
\draw [line width=0.02cm] (x13) to (w13o);
\draw [line width=0.02cm] (y13) to (w13);
\draw [line width=0.02cm] (y13) to (w13o);
\draw [line width=0.02cm] (x23) to (w23);
\draw [line width=0.02cm] (x23) to (w23o);
\draw [line width=0.02cm] (y23) to (w23);
\draw [line width=0.02cm] (y23) to (w23o);
\draw [line width=0.02cm] (w13) to (w23o);
\draw [line width=0.02cm] (w13o) to (w23);

\node (x14) at (25,2.5) [vertexL] {$x_1^4$};
\node (y14) at (31,2.5) [vertexL] {$y_1^4$};
\node (w14) at (25,5) [vertexL] {$w_1^4$};
\node (w14o) at (29,6) [vertexL] {$\overline{w}_1^4$};
\node (x24) at (26,1) [vertexL] {$x_2^4$};
\node (y24) at (29,1) [vertexL] {$y_2^4$};
\node (w24) at (27,6) [vertexL] {$w_2^4$};
\node (w24o) at (31,5) [vertexL] {$\overline{w}_2^4$};

\draw [line width=0.02cm] (x14) to (w14);
\draw [line width=0.02cm] (x14) to (w14o);
\draw [line width=0.02cm] (y14) to (w14);
\draw [line width=0.02cm] (y14) to (w14o);
\draw [line width=0.02cm] (x24) to (w24);
\draw [line width=0.02cm] (x24) to (w24o);
\draw [line width=0.02cm] (y24) to (w24);
\draw [line width=0.02cm] (y24) to (w24o);
\draw [line width=0.02cm] (w14) to (w24o);
\draw [line width=0.02cm] (w14o) to (w24);


\draw [line width=0.04cm] (x11) to (x21);
\draw (1.0,1.3) node {\footnotesize $e_1$};
\draw (30,9.7) node {\footnotesize $e_2$};

\draw [line width=0.02cm] (y11) to (x12);
\draw [line width=0.02cm] (y21) to (x22);
\draw [line width=0.02cm] (y12) to (x13);
\draw [line width=0.02cm] (y22) to (x23);
\draw [line width=0.02cm] (y13) to (x14);
\draw [line width=0.02cm] (y23) to (x24);


\node (a1) at (7,13) [vertexL] {$a^1$};
\node (c11) at (8,11) [vertexL] {$c_1^1$};
\node (c21) at (10,11) [vertexL] {$c_2^1$};
\node (c31) at (12,11) [vertexL] {$c_3^1$};
\node (b1) at (13,13) [vertexL] {$b^1$};

\draw [line width=0.02cm] (a1) to (c11);
\draw [line width=0.02cm] (a1) to (c21);
\draw [line width=0.02cm] (a1) to (c31);
\draw [line width=0.02cm] (b1) to (c11);
\draw [line width=0.02cm] (b1) to (c21);
\draw [line width=0.02cm] (b1) to (c31);

\draw [line width=0.02cm] (c11) to (w21o);
\draw [line width=0.02cm] (c11) to (w11o);
\draw [line width=0.02cm] (c21) to (w22);
\draw [line width=0.02cm] (c21) to (w12);
\draw [line width=0.02cm] (c31) to (w23);
\draw [line width=0.02cm] (c31) to (w13);


\node (a2) at (19,13) [vertexL] {$a^2$};
\node (c12) at (20,11) [vertexL] {$c_1^2$};
\node (c22) at (22,11) [vertexL] {$c_2^2$};
\node (c32) at (24,11) [vertexL] {$c_3^2$};
\node (b2) at (25,13) [vertexL] {$b^2$};

\draw [line width=0.02cm] (a2) to (c12);
\draw [line width=0.02cm] (a2) to (c22);
\draw [line width=0.02cm] (a2) to (c32);
\draw [line width=0.02cm] (b2) to (c12);
\draw [line width=0.02cm] (b2) to (c22);
\draw [line width=0.02cm] (b2) to (c32);

\draw [line width=0.02cm] (c12) to (w22o);
\draw [line width=0.02cm] (c12) to (w12o);
\draw [line width=0.02cm] (c22) to (w23o);
\draw [line width=0.02cm] (c22) to (w13o);
\draw [line width=0.02cm] (c32) to (w24);
\draw [line width=0.02cm] (c32) to (w14);

\draw [line width=0.02cm] (b1) to (a2);

\draw [line width=0.04cm] (b2)  to [out=0,in=50]   (y14);

\draw [line width=0.02cm] (y24) to [out=270,in=0]   (27,-0.2);
\draw [line width=0.02cm] (27,-0.2) to (1,-0.2);
\draw [line width=0.02cm] (1,-0.2) to [out=180,in=180] (a1);

\end{tikzpicture}
\end{center}
\caption{The graph $G$ if $I=(\overline{v_1} \vee v_2 \vee v_3) \wedge (\overline{v}_2 \vee \overline{v_3} \vee v_4)$.} \label{exampleII}
\end{figure}
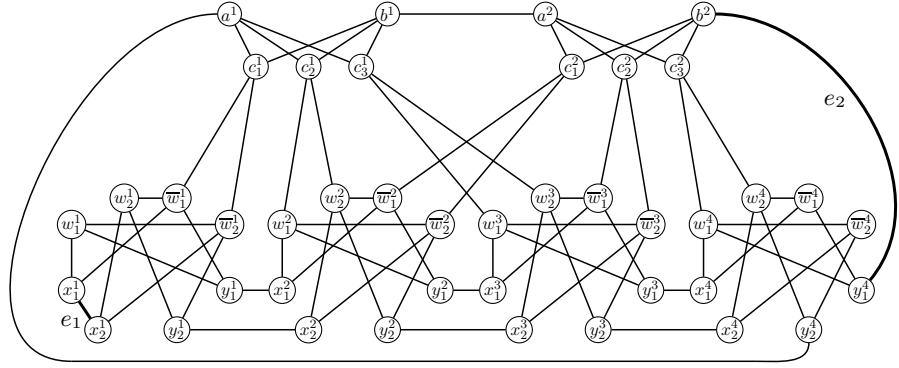

First assume that $I$ is satisfiable and we are given a truth assignment to all the variables that satisfies $I$. 
If $v_i = true$ then we add the edges in the path $x_1^i \overline{w}_1^i y_1^i$ and the path $x_2^i \overline{w}_2^i y_2^i$
to our solution $S$. If $v_i = false$ then we add the edges in the path $x_1^i w_1^i y_1^i$ and the path $x_2^i w_2^i y_2^i$
to $S$. We then add all edges of the form $y_1^i x_1^{i+1}$ and $y_2^i x_2^{i+1}$ to $S$, for $i \in [n-1]$. We then 
add the edges $e_1, e_2, y_2^n a^1$ to $S$. For all $j \in [m]$ we let $k_j \in [3]$ be defined such that 
the $k_j$'th literal in $C_j$ is satisfied, and we then add the edges in the path $a^j c_{k_j}^j b^j$ to $S$.
Finally we add the edge $b^j a^{j+1}$ for $j \in [m-1]$ to $S$. We have now obtained an induced cycle, $S$, in $G$ 
containing both $e_1$ and $e_2$ (see Figure~\ref{exampleIIsol} for an illustration).

\begin{figure}[hbtp]
\begin{center}
\tikzstyle{vertexB}=[circle,draw, top color=gray!5, bottom color=gray!30,minimum size=12pt, scale=0.6, inner sep=0.1pt]
\tikzstyle{vertexL}=[circle,draw, minimum size=15pt, scale=0.6, inner sep=0.1pt]
\begin{tikzpicture}[scale=0.35]

\node (x11) at (1,2.5) [vertexL] {$x_1^1$};
\node (y11) at (7,2.5) [vertexL] {$y_1^1$};
\node (w11) at (1,5) [vertexL] {$w_1^1$};
\node (w11o) at (5,6) [vertexL] {$\overline{w}_1^1$};
\node (x21) at (2,1) [vertexL] {$x_2^1$};
\node (y21) at (5,1) [vertexL] {$y_2^1$};
\node (w21) at (3,6) [vertexL] {$w_2^1$};
\node (w21o) at (7,5) [vertexL] {$\overline{w}_2^1$};

\draw [line width=0.02cm] (x11) to (w11);
\draw [line width=0.02cm] (x11) to (w11o);
\draw [line width=0.02cm] (y11) to (w11);
\draw [line width=0.02cm] (y11) to (w11o);
\draw [line width=0.02cm] (x21) to (w21);
\draw [line width=0.02cm] (x21) to (w21o);
\draw [line width=0.02cm] (y21) to (w21);
\draw [line width=0.02cm] (y21) to (w21o);
\draw [line width=0.02cm] (w11) to (w21o);
\draw [line width=0.02cm] (w11o) to (w21);

\node (x12) at (9,2.5) [vertexL] {$x_1^2$};
\node (y12) at (15,2.5) [vertexL] {$y_1^2$};
\node (w12) at (9,5) [vertexL] {$w_1^2$};
\node (w12o) at (13,6) [vertexL] {$\overline{w}_1^2$};
\node (x22) at (10,1) [vertexL] {$x_2^2$};
\node (y22) at (13,1) [vertexL] {$y_2^2$};
\node (w22) at (11,6) [vertexL] {$w_2^2$};
\node (w22o) at (15,5) [vertexL] {$\overline{w}_2^2$};

\draw [line width=0.02cm] (x12) to (w12);
\draw [line width=0.02cm] (x12) to (w12o);
\draw [line width=0.02cm] (y12) to (w12);
\draw [line width=0.02cm] (y12) to (w12o);
\draw [line width=0.02cm] (x22) to (w22);
\draw [line width=0.02cm] (x22) to (w22o);
\draw [line width=0.02cm] (y22) to (w22);
\draw [line width=0.02cm] (y22) to (w22o);
\draw [line width=0.02cm] (w12) to (w22o);
\draw [line width=0.02cm] (w12o) to (w22);

\node (x13) at (17,2.5) [vertexL] {$x_1^3$};
\node (y13) at (23,2.5) [vertexL] {$y_1^3$};
\node (w13) at (17,5) [vertexL] {$w_1^3$};
\node (w13o) at (21,6) [vertexL] {$\overline{w}_1^3$};
\node (x23) at (18,1) [vertexL] {$x_2^3$};
\node (y23) at (21,1) [vertexL] {$y_2^3$};
\node (w23) at (19,6) [vertexL] {$w_2^3$};
\node (w23o) at (23,5) [vertexL] {$\overline{w}_2^3$};

\draw [line width=0.02cm] (x13) to (w13);
\draw [line width=0.02cm] (x13) to (w13o);
\draw [line width=0.02cm] (y13) to (w13);
\draw [line width=0.02cm] (y13) to (w13o);
\draw [line width=0.02cm] (x23) to (w23);
\draw [line width=0.02cm] (x23) to (w23o);
\draw [line width=0.02cm] (y23) to (w23);
\draw [line width=0.02cm] (y23) to (w23o);
\draw [line width=0.02cm] (w13) to (w23o);
\draw [line width=0.02cm] (w13o) to (w23);

\node (x14) at (25,2.5) [vertexL] {$x_1^4$};
\node (y14) at (31,2.5) [vertexL] {$y_1^4$};
\node (w14) at (25,5) [vertexL] {$w_1^4$};
\node (w14o) at (29,6) [vertexL] {$\overline{w}_1^4$};
\node (x24) at (26,1) [vertexL] {$x_2^4$};
\node (y24) at (29,1) [vertexL] {$y_2^4$};
\node (w24) at (27,6) [vertexL] {$w_2^4$};
\node (w24o) at (31,5) [vertexL] {$\overline{w}_2^4$};

\draw [line width=0.02cm] (x14) to (w14);
\draw [line width=0.02cm] (x14) to (w14o);
\draw [line width=0.02cm] (y14) to (w14);
\draw [line width=0.02cm] (y14) to (w14o);
\draw [line width=0.02cm] (x24) to (w24);
\draw [line width=0.02cm] (x24) to (w24o);
\draw [line width=0.02cm] (y24) to (w24);
\draw [line width=0.02cm] (y24) to (w24o);
\draw [line width=0.02cm] (w14) to (w24o);
\draw [line width=0.02cm] (w14o) to (w24);


\draw [line width=0.04cm] (x11) to (x21);
\draw (1.0,1.3) node {\footnotesize $e_1$};
\draw (30,9.7) node {\footnotesize $e_2$};

\draw [line width=0.02cm] (y11) to (x12);
\draw [line width=0.02cm] (y21) to (x22);
\draw [line width=0.02cm] (y12) to (x13);
\draw [line width=0.02cm] (y22) to (x23);
\draw [line width=0.02cm] (y13) to (x14);
\draw [line width=0.02cm] (y23) to (x24);


\node (a1) at (7,13) [vertexL] {$a^1$};
\node (c11) at (8,11) [vertexL] {$c_1^1$};
\node (c21) at (10,11) [vertexL] {$c_2^1$};
\node (c31) at (12,11) [vertexL] {$c_3^1$};
\node (b1) at (13,13) [vertexL] {$b^1$};

\draw [line width=0.02cm] (a1) to (c11);
\draw [line width=0.02cm] (a1) to (c21);
\draw [line width=0.02cm] (a1) to (c31);
\draw [line width=0.02cm] (b1) to (c11);
\draw [line width=0.02cm] (b1) to (c21);
\draw [line width=0.02cm] (b1) to (c31);

\draw [line width=0.02cm] (c11) to (w21o);
\draw [line width=0.02cm] (c11) to (w11o);
\draw [line width=0.02cm] (c21) to (w22);
\draw [line width=0.02cm] (c21) to (w12);
\draw [line width=0.02cm] (c31) to (w23);
\draw [line width=0.02cm] (c31) to (w13);


\node (a2) at (19,13) [vertexL] {$a^2$};
\node (c12) at (20,11) [vertexL] {$c_1^2$};
\node (c22) at (22,11) [vertexL] {$c_2^2$};
\node (c32) at (24,11) [vertexL] {$c_3^2$};
\node (b2) at (25,13) [vertexL] {$b^2$};

\draw [line width=0.02cm] (a2) to (c12);
\draw [line width=0.02cm] (a2) to (c22);
\draw [line width=0.02cm] (a2) to (c32);
\draw [line width=0.02cm] (b2) to (c12);
\draw [line width=0.02cm] (b2) to (c22);
\draw [line width=0.02cm] (b2) to (c32);

\draw [line width=0.02cm] (c12) to (w22o);
\draw [line width=0.02cm] (c12) to (w12o);
\draw [line width=0.02cm] (c22) to (w23o);
\draw [line width=0.02cm] (c22) to (w13o);
\draw [line width=0.02cm] (c32) to (w24);
\draw [line width=0.02cm] (c32) to (w14);

\draw [line width=0.02cm] (b1) to (a2);

\draw [line width=0.04cm] (b2)  to [out=0,in=50]   (y14);

\draw [line width=0.02cm] (y24) to [out=270,in=0]   (27,-0.2);
\draw [line width=0.02cm] (27,-0.2) to (1,-0.2);
\draw [line width=0.02cm] (1,-0.2) to [out=180,in=180] (a1);


\draw [line width=0.06cm] (x11) to (w11o);
\draw [line width=0.06cm] (y11) to (w11o);
\draw [line width=0.06cm] (x21) to (w21o);
\draw [line width=0.06cm] (y21) to (w21o);

\draw [line width=0.06cm] (x12) to (w12);
\draw [line width=0.06cm] (y12) to (w12);
\draw [line width=0.06cm] (x22) to (w22);
\draw [line width=0.06cm] (y22) to (w22);


\draw [line width=0.06cm] (x13) to (w13o);
\draw [line width=0.06cm] (y13) to (w13o);
\draw [line width=0.06cm] (x23) to (w23o);
\draw [line width=0.06cm] (y23) to (w23o);

\draw [line width=0.06cm] (x14) to (w14o);
\draw [line width=0.06cm] (y14) to (w14o);
\draw [line width=0.06cm] (x24) to (w24o);
\draw [line width=0.06cm] (y24) to (w24o);


\draw [line width=0.06cm] (x11) to (x21);

\draw [line width=0.06cm] (y11) to (x12);
\draw [line width=0.06cm] (y21) to (x22);
\draw [line width=0.06cm] (y12) to (x13);
\draw [line width=0.06cm] (y22) to (x23);
\draw [line width=0.06cm] (y13) to (x14);
\draw [line width=0.06cm] (y23) to (x24);


\draw [line width=0.06cm] (a1) to (c31);
\draw [line width=0.06cm] (b1) to (c31);

\draw [line width=0.06cm] (a2) to (c12);
\draw [line width=0.06cm] (b2) to (c12);

\draw [line width=0.06cm] (b1) to (a2);
\draw [line width=0.06cm] (b2)  to [out=0,in=50]   (y14);

\draw [line width=0.06cm] (y24) to [out=270,in=0]   (27,-0.2);
\draw [line width=0.06cm] (27,-0.2) to (1,-0.2);
\draw [line width=0.06cm] (1,-0.2) to [out=180,in=180] (a1);

\end{tikzpicture}
\end{center}
\caption{An induced cycle in $G$, containing $e_1$ and $e_2$, when considering the solution $v_1=v_3=v_4=true$ and $v_2=false$ for $I=(\overline{v_1} \vee v_2 \vee v_3) \wedge (\overline{v}_2 \vee \overline{v_3} \vee v_4)$ (as literal $v_3$ is true in clause $1$ and literal $\overline{v}_2$ is true in clause $2$).} \label{exampleIIsol}
\end{figure}
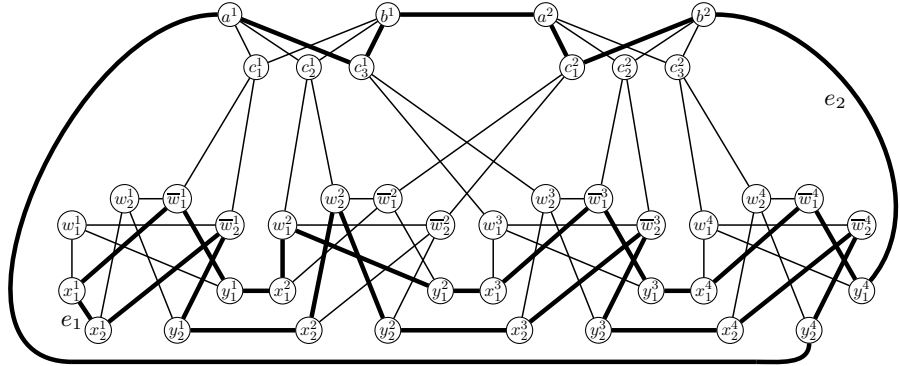

Conversely assume that there exists an induced cycle, $C$, in $G$, containing $e_1$ and $e_2$. We will show that $I$ must be satisfiable. We first prove Claim~A below.

\2

{\bf Claim~A:} {\em One of the following two cases occur for all $i \in [n]$.

\begin{description}
 \item[Case 1:] $x_1^i w_1^i y_1^i$ and $x_2^i w_2^i y_2^i$ are subpaths in $C$, or
 \item[Case 2:] $x_1^i \overline{w}_1^i y_1^i$ and $x_2^i \overline{w}_2^i y_2^i$ are subpaths in $C$.
\end{description}
}

\2

{\em Proof of Claim~A:} The proof is by induction on $i$. Let $i \in [n]$ be arbitrary and 
assume that the above holds for all smaller values of $i$ (the base case $i=0$ holds vacuously).
Let $P_1 = C[x_1^1,x_1^i]$ (if $i=1$ $V(P_1)=\{x_1^1\}$) and let $P_2 = C[x_2^1,x_2^i]$.
Note that, by induction, $P_1$ and $P_2$ both exist and they are both paths passing through $H_1, H_2, \ldots, H_{i-1}$.
Note that $w_1^i$ or $\overline{w}_1^i$ is a neighbour of $x_1^i$ on $C$, due to the existence of $P_1$.

First consider the case when $x_1^i w_1^i \in E(C)$. If $x_2^i \overline{w}_2^i \in E(C)$, then 
$w_1^i \overline{w}_2^i \in E(C)$ (as otherwise $w_1^i \overline{w}_2^i$ would be a chord in $C$, a contradiction
to $C$ being induced), a contradiction to $e_2 \in E(C)$ (as in this case $V(C) = V(P_1) \cup V(P_2) \cup \{w_1^i, \overline{w}_2^i\}$).
Therefore $x_2^i \overline{w}_2^i \not\in E(C)$, which implies that $x_2^i w_2^i \in E(C)$.

For the sake of contradiction, assume that $w_1^i y_1^i \not\in E(C)$, which implies that 
$w_1^i c_k^j \in E(C)$, for some $ k\in [3]$ and $j \in [m]$. However we must now have 
$c_k^j w_2^j \in E(C)$ (as otherwise $c_k^j w_2^j$ would be a chord in $C$), a contradiction to $e_2 \in E(C)$
(as in this case $V(C) = V(P_1) \cup V(P_2) \cup \{w_1^i, w_2^i, c_k^j\}$)). Therefore,
$w_1^i y_1^i \in E(C)$. Analogously, we can show that $w_2^i y_2^i \in E(C)$. This implies that
we are in Case~1 above, whenever $x_1^i w_1^i \in E(C)$.

If $x_1^i w_1^i \not\in E(C)$ then we must have $x_1^i \overline{w}_1^i \in E(C)$. In this case we can analogously 
to above show that we are in Case 2, which completes the proof of Claim~A.

\2

We now return to the proof of the theorem. By Claim~A we note that both $C[x_1^1,y_1^n]$ and
$C[x_2^1,y_2^n]$ pass through $H_1, H_2, \ldots, H_{n}$ in that order. For each $i$ we also note that 
$V(C) \cap \{w_1^i,w_2^i,\overline{w}_1^i,\overline{w}_2^i\}$ is either $\{w_1^i,w_2^i\}$ or 
$\{\overline{w}_1^i,\overline{w}_2^i\}$, as if we are in Case 1 in Claim~A the vertices 
$\overline{w}_1^i,\overline{w}_2^i$ cannot belong to $C$ (as $C$ would then have a chord) and if we
are in Case 2 then $w_1^i,w_2^i$ cannot belong to $C$. 

Clearly $b^m y_1^n$ ($=e_2$) and $y_2^n a^1$ are both edges of $C$ and $C[a^1,b^m]$ is a path passing through
$R_1,R_2,\ldots,R_m$ in that order. So for each $j \in [m]$ there exists exactly one $k_j \in [3]$ such that 
$c_{k_j}^j \in V(C)$. If we let $v_i$ be true if and only if neither $w_1^i$ or $w_2^i$ belong to $C$, then we
note that the $k_j$'th literal in the $j$'th clause must be satisfied.  So $I$ is satisfiable, which completes the proof.
\end{proof}

\thsix*
\begin{proof}
Let $G$ be a graph and let $e_1 = u_1v_1$ and $e_2 = u_2 v_2$ be distinct edges of $G$. 
We will construct an auxillary graph $H$ with an edge $e_2' \in E(H)$, such that $H$ 
contains a $0$-perfect forest containing $e_2'$ if and only if $G$ contains an induced cycle, $C$, such that
$e_1,e_2 \in E(C)$. This will complete the proof by Theorem~\ref{inducedCycle}.

Let $H$ be obtained from $G$ by adding a pendent edge to each vertex in $V(G) \setminus \{u_1,v_1\}$ and deleting 
the edge $e_1$. Let $E_P$ denote the set of all the pendent edges we just added to $G$. 
 Let $e_2' = u_2 v_2$ and note that $e_2' \in E(H)$. This completes the construction of $H$.

Assume that there exists an induced cycle, $C$, in $G$ such that 
$e_1,e_2 \in E(C)$. Let $E' = E_P \cup E(C) \setminus e_1$.
Note that the edges in $E'$ induce a $0$-perfect forest in $H$ containing the edge $e_2'$.

Conversely assume that there is a $0$-perfect forest, $F$, in $H$ containing $e_2'$. Clearly $F$ contains 
all edges in $E_p$ as each pendent edge is incident with a vertex of degree one.
Let $Q$ be the subgraph of $H$ induced by the edges in $E(F)\setminus E_P$.
Note that $Q$ is a perfect forest where $u_1$ and $v_1$ have odd
degree and all other vertices have even degree. 
As $Q$ is a perfect forest all components are induced trees, and as $u_1$ and $v_1$ are the only vertices of
odd degree, this implies that $Q$ is an induced path between $u_1$ and $v_1$. 
Adding the edge $e_1$ to $Q$ gives us an induced cycle in $G$ containing both $e_1$ and $e_2$ (as $e_2' \in E(F)$).

Therefore we have proven that $H$ contains a $0$-perfect forest containing $e_2'$ if and only if 
$G$ contains an induced cycle, $C$, such that $e_1,e_2 \in E(C)$, as desired.
\end{proof}

\section{Proof of Theorem \ref{FparityForestAvoidingEdge}}\label{sec:noe}


Let $G$ be a graph and $e = uv$  an edge of $G.$ 
Let  $f\colon V(G) \rightarrow \{0,1\}$ be an even-sum function. 
Our polynomial-time algorithm will follow from the four claims proved below. At the end of the proof, we briefly discuss
how the claims are used in the algorithm. 

\2

{\bf Claim A:} {\em Suppose that $G$ contains a cut-vertex $x$, which may or may not belong to $\{u,v\}.$
Let $C$ be the component in $G-x$ intersecting $\{u,v\}$ (there is exactly one such component as
$uv \in E(G)$) and let $G' = G[V(C) \cup \{x\}]$. Let $f'(w)=f(w)$ for all $w \in V(C)$ and define
$f'(x) \in \{0,1\}$ such that $\sum_{z \in V(G_i)} f'(z)$ is even.
Then $G$ has an $f$-parity perfect forest not containing $e$ if and only if
$G'$ has an $f'$-parity perfect forest not containing $e$.
 }

\2

{\em Proof of Claim~A:} Let $G$ contain a cut-vertex $x$ and 
let $C_1,C_2, \ldots, C_k$ be the components in $G-x$.
Without loss of generality,  assume that $C_1$ is the component intersecting $\{u,v\}$.
Let $G_i = G[V(C_i)\cup \{x\}]$ for all $i \in [k]$. 

For each $i \in [k]$ we will let $f_i :V(G_i) \rightarrow \{0,1\}$ be defined such that
$f_i(w)=f(w)$ for all $w \in V(C_i)$ and $\sum_{z \in V(G_i)} f_i(z)$ is even (this defines the value of
$f_i(x)$).
We will show that $G$ has an $f$-parity perfect forest not containing $e$ if and only if 
$G_1$ has an $f_1$-parity perfect forest not containing $e$, which will complete the proof of Claim~A.

First assume that $G_1$ has an $f_1$-parity perfect forest $F_1$ not containing $e$.
By Theorem~\ref{th:fpar} there exists an $f_i$-parity perfect forest, $F_i$, in $G_i$ for all $i=2,3,\ldots,k$.
Now $F_1 \cup F_2 \cup \cdots \cup F_k$ is an $f$-parity perfect forest of $G$ not containing $e$, as desired.

Conversely assume that $G$ has an $f$-parity perfect forest $F$ not containing $e$.
If we restrict $F$ to $V(G_1)$, then we obtain an $f_1$-parity perfect forest of $G_1$ not containing $e$.

\2

{\bf Claim~B:} {\em If $G$ is $2$-connected and $f(u)=0$ or $f(v)=0$ then 
$G$ has an $f$-parity perfect forest not containing $e$.} 
 
\2

{\em Proof of Claim~B:} Assume without loss of generality that $f(u)=0$. 
As $G$ is $2$-connected $G-u$ is connected and $\sum_{z \in V(G-u)} f(z)$ is even.  
Therefore, by Theorem~\ref{th:fpar}, there exists an 
 $f$-parity perfect forest in $G-u$, which is also an $f$-parity perfect forest in $G$ not containing the edge $e$.

\2

{\bf Claim~C:} {\em If $G$ is $2$-connected and $f(u)=f(v)=1$ then 
$G$ has a $f$-parity perfect forest if and only if $\sum_{z \in V(G)} f(z) \geq 4$.}

\2

{\em Proof of Claim~C:} Let $S=\sum_{z \in V(G)} f(z).$ As $f$ is even-sum, $S$ is even. Since $f(u)=f(v)=1$, we have $S\ge 2.$
If $S=2$ and $F$ is an $f$-parity perfect forest in $G$, then 
$u$ and $v$ must be leaves of the same tree in $F$ (as they are the only vertices with an $f$-value of one).
Therefore $e \in E(F)$, as otherwise the tree containing $u$ and $v$ is not induced in $G$. So, if
$S=2$ then $G$ has no $f$-parity perfect forest $F$ in $G$ with $e \not\in E(F)$.

We may therefore assume that $S  \geq 4$ and let $w \in V(G) \setminus \{u,v\}$ have $f(w)=1$.
As $G$ is $2$-connected there exists a $(u,v)$-path, $P$, in $G$ with $w \in V(P).$ (To see it,
consider two internally disjoint paths from $w$ to $w'$ where $w'$ is a new vertex added
to $G$ such that $N(w')=\{u,v\}$.) We now create a spanning tree $T$ in $G$, such that $E(P) \subseteq E(T)$ and
$d_T(w)=2$, as follows.  Initially let $T = P$. While $V(T) \not= V(G)$ let $q \in V(G) \setminus V(T)$ be arbitrary such 
that $q$ has an edge into $V(T) \setminus \{w\}$ (which exists as $G$ is $2$-connected). Add $q$ and an edge from $q$ into
$V(T) \setminus \{w\}$ to $T$. When $V(T)$ becomes equal to $V(G)$ we have our desired tree $T$.

Let $T_1$ and $T_2$ be the two trees in $T-w$ (there are exactly two trees in $T-w$ as $d_T(w)=2$).
Let $S_1 = \sum_{z \in V(T_1)} f(z)$ and let 
$S_2 = \sum_{z \in V(T_2)} f(z)$. As $f(w)=1$ and $V(T_1) \cup V(T_2) = V(G) \setminus \{w\}$, we note that $S_1+S_2$ is odd.
If $S_i$ is odd then add $w$ to $T_i$ ($ i \in [2]$), using the edge from $w$ to $V(T_i)$ in $T$. This results in two trees, say $T_1'$ and $T_2'$, where
$\sum_{z \in V(T_i')} f(z)$ is even for $i \in [2]$. Furthermore, as $w \in V(P)$ and $E(P) \subseteq E(T)$, we note that
$u$ and $v$ do not belong to the same tree $T_i'$. 
By Theorem~\ref{th:fpar} there exists an $f$-parity perfect forest, $F_i'$, of $G[V(T_i')]$ for $i \in [2]$
(as $T_i'$ is a spanning tree in $G[V(T_i')]$, $G[V(T_i')]$ is connected). 
Now $F_1' \cup F_2'$ is an $f$-parity perfect forest of $G$ not containing $e$.
This completes the proof of Claim~C.

\2

It is easy to see that the following algorithm is of polynomial time. 
Keep reducing the graph (see Claim~A) as long as there exists a cut-vertex and when there are
no more cut-vertices then the answer is "no" if the endpoints of $e$ have an $f$-value of one and all
other vertices have an $f$-value of zero and "yes", otherwise (see Claims~B and~C).
See Figure~\ref{IllustrationAlg} for an illustration of the algorithm.

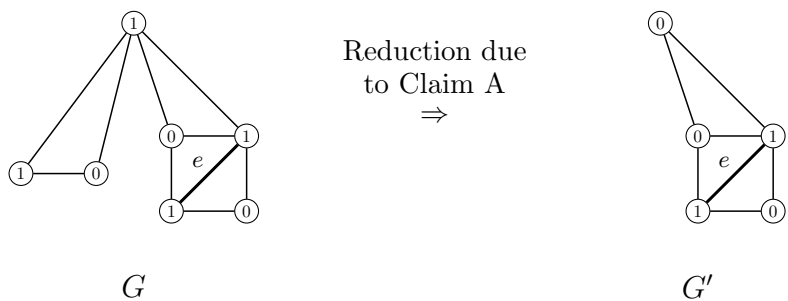
\begin{figure}[hbtp]
\begin{center}
\tikzstyle{vertexB}=[circle,draw, top color=gray!5, bottom color=gray!30,minimum size=12pt, scale=0.6, inner sep=0.1pt]
\tikzstyle{vertexL}=[circle,draw, minimum size=15pt, scale=0.6, inner sep=0.1pt]
\begin{tikzpicture}[scale=0.5]

\node (x1) at (1,2) [vertexL] {$1$};
\node (x2) at (3,2) [vertexL] {$0$};
\node (x3) at (4,6) [vertexL] {$1$};
\node (x4) at (7,3) [vertexL] {$1$};
\node (x5) at (7,1) [vertexL] {$0$};
\node (x6) at (5,1) [vertexL] {$1$};
\node (x7) at (5,3) [vertexL] {$0$};

\draw [line width=0.02cm] (x1) to (x2);
\draw [line width=0.02cm] (x1) to (x3);
\draw [line width=0.02cm] (x2) to (x3);
\draw [line width=0.02cm] (x3) to (x4);
\draw [line width=0.02cm] (x3) to (x7);

\draw [line width=0.02cm] (x4) to (x5);
\draw [line width=0.02cm] (x5) to (x6);
\draw [line width=0.02cm] (x6) to (x7);
\draw [line width=0.02cm] (x7) to (x4);
\draw [line width=0.04cm] (x4) to (x6);
\draw (5.7,2.3) node {{\footnotesize $e$}};
\draw (4,-1) node {{\large $G$}};


\draw (12,5.3) node {Reduction due};
\draw (12,4.4) node {to Claim A};
\draw (12,3.5) node {$ \Rightarrow$};


\node (x31) at (18,6) [vertexL] {$0$};
\node (x41) at (21,3) [vertexL] {$1$};
\node (x51) at (21,1) [vertexL] {$0$};
\node (x61) at (19,1) [vertexL] {$1$};
\node (x71) at (19,3) [vertexL] {$0$};

\draw [line width=0.02cm] (x31) to (x41);
\draw [line width=0.02cm] (x31) to (x71);
\draw [line width=0.02cm] (x41) to (x51);
\draw [line width=0.02cm] (x51) to (x61);
\draw [line width=0.02cm] (x61) to (x71);
\draw [line width=0.02cm] (x71) to (x41);
\draw [line width=0.04cm] (x41) to (x61);
\draw (19.7,2.3) node {{\footnotesize $e$}};
\draw (19,-1) node {{\large $G'$}};

\end{tikzpicture}
\end{center}
\caption{An illustration of the algorithm given in Theorem~\ref{FparityForestAvoidingEdge}, where the 
values on the nodes indicate the $f$-values.
As in the final graph the endpoints of $e$ have an $f$-value of one and all other vertices have an $f$-value of zero 
there is no $f$-parity perfect forest in $G'$ {avoiding the edge $e$} and therefore not in $G$ either. } \label{IllustrationAlg}
\end{figure}

\section{Proof of Theorem \ref{main}}\label{sec:main}

Theorem \ref{main} follows from Theorem \ref{ThmMain} and Lemma \ref{no1PF} proved in this section. 
To prove Theorem \ref{ThmMain}, we will use the following:

\begin{lemma} \label{LemBlock}
  Let $G$ be a connected graph of even order and let $xy \in E(G)$ such that $G-\{x,y\}$ is connected.
If $G-x \in {\cal B}$ and $G-y \in {\cal B}$ then $N[x]=N[y]$.
\end{lemma}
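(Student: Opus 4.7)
The plan is to show $A=A'$, where $A=N_G(x)\setminus\{y\}$ and $A'=N_G(y)\setminus\{x\}$; together with $xy\in E(G)$ this is equivalent to $N[x]=N[y]$. The first step is to pin down the block structure using the hypothesis that $G-\{x,y\}$ is connected: $x$ is not a cut-vertex of $G-y$, so $x$ lies in a unique block $B_x$ of $G-y$, which is an odd clique by $G-y\in\mathcal B$, and since $N_{G-y}(x)=A$ we get $B_x=\{x\}\cup A$ with $|A|$ even and $|A|\ge 2$. Symmetrically, $B_y=\{y\}\cup A'$ is a block of $G-x$ and $|A'|$ is even and at least $2$. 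Note also that $A$ and $A'$ are cliques in $G$, and hence also in $G-x$ and $G-y$ since they avoid $\{x,y\}$.

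Now suppose for contradiction that $A\neq A'$. I first claim that $|A\cap A'|\le 1$. Since $A$ is a clique in $G-x$, it lies in a single block $\tilde B$ of $G-x$; if $A\cap A'$ contained two distinct vertices $w,w'$, these would both lie in $\tilde B\cap B_y$, and since distinct blocks share at most one vertex we would have $\tilde B=B_y$, giving $A\subseteq B_y\setminus\{y\}=A'$. The symmetric argument performed in $G-y$ would then give $A'\subseteq A$, contradicting $A\neq A'$.

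The main step is to extract an impossible vertex from the block of $A'$ in $G-y$. Since $A'$ is a clique in $G-y$, it lies inside a single block $B^*$, which is an odd clique because $G-y\in\mathcal B$. As $|A'|$ is even, $|B^*|>|A'|$, so there exists $u\in B^*\setminus A'$ adjacent in $G-y$ to every vertex of $A'$. I rule out each possibility for $u$. If $u=x$, then $A'\subseteq N_G(x)\setminus\{y\}=A$, so $|A\cap A'|=|A'|\ge 2$, contradicting the claim $|A\cap A'|\le 1$. Otherwise $u\in V(G)\setminus(A'\cup\{x,y\})$; pick any two distinct $a_1,a_2\in A'$ (which exist since $|A'|\ge 2$). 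Then the triangle $\{u,a_1,a_2\}$ lives entirely in $G-x$, so it lies in a single block of $G-x$, namely the block containing the edge $a_1a_2$; but that block is $B_y$, so $u\in B_y=\{y\}\cup A'$, contradicting $u\notin A'\cup\{y\}$. This contradiction forces $A=A'$, and hence $N[x]=N[y]$.

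The main obstacle is recognising that the parity mismatch between the odd block $B^*$ and the even set $A'$ forces a ``surplus'' vertex $u\in B^*\setminus A'$, whose existence is then obstructed by the block structure of $G-x$; the preliminary step $|A\cap A'|\le 1$ is exactly what is needed to close the case $u=x$.
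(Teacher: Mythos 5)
Your proof is correct. Its first half --- showing that the block of $G-y$ containing $x$ is exactly $N_{G-y}[x]=\{x\}\cup A$, an odd clique, and symmetrically that $B_y=\{y\}\cup A'$ is a block of $G-x$ --- is the same as Claim~A in the paper's proof. The finish is genuinely different. The paper additionally records that $A$ and $A'$ are blocks of $G-\{x,y\}$ and argues that if they were distinct, then $A'$ would persist as a block of even order in $G-y$, contradicting $G-y\in\mathcal{B}$; this is very short but leans on the (tersely justified) fact that a block of $G-\{x,y\}$ other than the one absorbed by $x$ remains a block of $G-y$. You never descend to $G-\{x,y\}$: you place the even clique $A'$ inside an odd block $B^*$ of $G-y$, use parity to extract a surplus vertex $u\in B^*\setminus A'$, and eliminate $u=x$ via your auxiliary claim $|A\cap A'|\le 1$ and $u\notin\{x,y\}$ via the triangle $\{u,a_1,a_2\}$ being forced into the block $B_y$ of $G-x$. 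Your version is longer and needs the extra intersection claim, but every step appeals only to the two standard block facts (distinct blocks share at most one vertex; a $2$-connected subgraph lies in a single block) applied to $G-x$ and $G-y$, where the hypothesis $\mathcal{B}$ directly lives. Both proofs tacitly assume $x$ has a neighbour besides $y$ (so that $|A|\ge 2$); this only fails for $G=K_2$, where the lemma is trivial.
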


\begin{proof}
Let $G$ be a connected graph of even order and let $xy \in E(G)$ be chosen such that $G-\{x,y\}$ is connected.
Let $G_y = G-x$ and let $G_x = G-y$ and assume that 
$G_y \in {\cal B}$ and $G_x \in {\cal B}$.
Let $C_1^x,C_2^x,\ldots , C_{l_x}^x$ be the blocks of $G_x$ and without loss
of generality assume that $x \in V(C_1^x)$. 
Analogously, let $C_1^y,C_2^y,\ldots C_{l_y}^y$ be the blocks of $G_y$ and without loss
of generality assume that $y \in V(C_1^y)$.

\2

{\bf Claim A:} {\em $N_{G_x}[x] = V(C_1^x)$ and $C_1^x$ is a complete graph of odd order and
$C_1^x - x$ is a block in $G-\{x,y\}$.
Analogously, $N_{G_x}[y] = V(C_1^y)$ and $C_1^y$ is a complete graph of odd order and
$C_1^y - y$ is a block in $G-\{x,y\}$.}

\2

{\em Proof of Claim~A:} For the sake of contradiction assume that $u_1,u_2 \in N_{G_x}(x)$ but 
$u_1$ and $u_2$ belong to different blocks of $G_x$.  In this case there is a cut-vertex in 
$G_x$ separating $u_1$ and $u_2$, which must be $x$ (as $u_1xu_2$ is a path in $G_x$).
However $x$ does not separate $u_1$ and $u_2$ as $G-\{x,y\}$ is connected. This contradiction
implies that all vertices in $N_{G_x}(x)$ belong to the same block of $G_x$.

Therefore, $N_{G_x}[x] \subseteq V(C_1^x)$ as $x$ is not a cut-vertex in $G_x$ (as $G-\{x,y\}$ is connected)
and hence $x$ only belongs to one block of $G_x$. As $G_x \in {\cal B}$ we note that
$C_1^x$ is a complete graph of odd order.
As $|V(C_1^x)| \geq 3$ (as all blocks contain at least two vertices, and $|V(C_1^x)|$ is odd) and $x$ is 
not a cut-vertex in $G_x$ we note that $C_1^x - x$ is a block in $G-\{x,y\}$.
This completes the proof of Claim~A.

\2

We now return to the proof of the lemma.  By Claim~A we note that $C_1^y-y$ is a block in $G-\{x,y\}$
which furthermore is a complete graph of even order. If $C_1^x-x$ and $C_1^y-y$  are different blocks 
in $G-\{x,y\}$, then $C_1^y-y$ is a block of even order in $G_x$, a contradiction to $G_x \in  {\cal B}$.
So, $C_1^x-x$ and $C_1^y-y$ are the same block in $G-\{x,y\}$. By Claim~A, we have
the following chain of equalities, which completes the proof of the lemma.

\[
{ N_G[x] = V(C_1^x -x) \cup \{x,y\} = V(C_1^y-y) \cup \{x,y\} = N_G[y]}
\]
\end{proof}

\begin{theorem} \label{ThmMain}
Every connected graph, $G \not\in {\cal B}$, of odd order $n\ge 3$ contains a proper $1$-perfect forest.  
\end{theorem}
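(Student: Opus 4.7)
The plan is to induct on the odd order $n \ge 3$. For the base case $n = 3$, the only connected graph of order $3$ not in $\mathcal{B}$ is $P_3$ (since the alternative $K_3$ lies in $\mathcal{B}$), and $P_3$ itself is a proper $1$-perfect forest: its two leaves have odd degree $1$, its middle vertex has even degree $2$, and the tree is trivially induced.

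For the inductive step with $n \ge 5$, the natural reduction I would attempt first is to find an edge $xy \in E(G)$ such that $G - \{x,y\}$ is connected and $G - \{x,y\} \notin \mathcal{B}$. Given such an edge, the induction hypothesis applied to $G - \{x,y\}$ of odd order $n - 2 \ge 3$ (using the base case when $n - 2 = 3$) produces a proper $1$-perfect forest $F'$ of $G - \{x,y\}$, and I claim that $F := F' \cup \{xy\}$ is a proper $1$-perfect forest of $G$. Indeed, the new tree on $\{x,y\}$ with its single edge $xy$ is trivially induced in $G$; every other tree of $F$ is a tree of $F'$ whose vertex set is disjoint from $\{x,y\}$, so it remains induced in $G$; each vertex in $V(G) \setminus \{x,y\}$ has the same degree in $F$ as in $F'$, so exactly the even-degree vertex of $F'$ has even degree in $F$; and $d_F(x) = d_F(y) = 1$ ensures that $F$ has no isolated vertices.

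The main obstacle is to prove that such a reducing edge always exists when $G \notin \mathcal{B}$ has odd order $n \ge 5$. I would handle two cases. First, if $G$ has no edge $xy$ at all with $G - \{x,y\}$ connected (the paradigmatic examples being stars $K_{1,k}$ with $k$ even, for which any endpoint pair separates the remaining $k-1$ leaves), then $G$ is highly restricted and can be classified; in each such case a proper $1$-perfect forest is exhibited directly --- e.g., $K_{1,k}$ with $k$ even is itself such a forest (the center has even degree $k$, the leaves odd degree $1$, the whole graph is one induced tree). Second, if some edge $xy$ has $G - \{x,y\}$ connected, I would argue by contradiction: assuming $G - \{x,y\} \in \mathcal{B}$ for every such edge, I pick a non-cut vertex $v$ of $G$ so that $G - v$ has even order $n-1 \ge 4$ and is connected, and then apply Lemma~\ref{LemBlock} inside $G - v$ to the edge $xy$. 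After arranging that $(G - v) - x$ and $(G - v) - y$ both lie in $\mathcal{B}$, Lemma~\ref{LemBlock} gives $N_{G - v}[x] = N_{G - v}[y]$, a strong local symmetry; iterating this conclusion over all eligible edges together with the block-decomposition of $G$, I aim to show that every block of $G$ is a complete graph of odd order, i.e.\ $G \in \mathcal{B}$, a contradiction.

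The hardest part is the case analysis underlying the second case: choosing the right non-cut vertex $v$ so that the hypotheses of Lemma~\ref{LemBlock} can be verified for the candidate edges, and translating the collection of resulting local symmetries $N_{G-v}[x] = N_{G-v}[y]$ into a global statement about the blocks of $G$. A secondary difficulty is the explicit classification of the exceptional graphs of the first case, but I expect these to reduce to a small family of star-like graphs for which the graph itself (or a natural spanning subgraph) is the desired forest.
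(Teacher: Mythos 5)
Your reduction step itself is sound: if $xy \in E(G)$ with $G-\{x,y\}$ connected and $G-\{x,y\}\notin{\cal B}$, then adjoining the single-edge tree $xy$ to a proper $1$-perfect forest of $G-\{x,y\}$ does yield a proper $1$-perfect forest of $G$, since the old trees remain induced (their vertex sets avoid $x$ and $y$) and all degrees outside $\{x,y\}$ are unchanged. The genuine gap is that you never establish that such an edge exists, and both halves of your existence argument are unsupported. First, the family of graphs with \emph{no} edge $xy$ such that $G-\{x,y\}$ is connected is not ``a small family of star-like graphs'': already for trees, such an edge exists if and only if some leaf is adjacent to a vertex of degree $2$, so the exceptional family contains every tree in which no leaf has a degree-$2$ neighbour (for instance the $7$-vertex double broom, a path $a_1\,c\,a_2$ with two pendant leaves attached to each of $a_1$ and $a_2$), as well as non-tree graphs with cut-vertices. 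Classifying these and exhibiting a proper $1$-perfect forest in each is a substantial sub-theorem that the proposal does not carry out. Second, in the remaining case your argument is explicitly only a plan (``I aim to show''): you do not verify the hypotheses of Lemma~\ref{LemBlock} inside $G-v$ (which requires $G-v$ to have even order and $(G-v)-\{x,y\}$ to be connected for the chosen edge), and the key implication --- that the local conclusions $N_{G-v}[x]=N_{G-v}[y]$ over all eligible edges force $G\in{\cal B}$ --- is neither proved nor obviously true.

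For comparison, the paper sidesteps this existence problem entirely by splitting on whether $G$ is $2$-connected. If $G$ has a cut-vertex, it decomposes there according to the parity of the components and glues forests obtained from the induction hypothesis, Proposition~\ref{Thm1perfect} and Theorem~\ref{ThmKnown}. If $G$ is $2$-connected, it takes an induced path $p_1p_2p_3$ and applies Lemma~\ref{LemBlock} to the edge $p_2p_3$ inside $G[V(C_1)\cup\{p_2,p_3\}]$ (whose hypotheses are easy to verify there), finishing with a $0$-perfect forest on the connected even-order remainder. The repeated use of Theorem~\ref{ThmKnown} to cover even-order pieces is the leverage your outline is missing; without it, you are forced into the unresolved edge-existence question above.
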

\begin{proof}
The proof is by induction over odd integers $n\ge 3.$ For $n=3$, we have $G\cong P_3$, the path of order 3, which is a proper 1-perfect forest.
Now we assume that $G$ is a connected graph of odd order $n\ge 5$ such that $G \not\in {\cal B}$. Let us consider two cases.

\2

\noindent {\bf Case 1: $G$ is not 2-connected.}

\2

Assume that $G$ has a cut-vertex $x$ such that  $G-x$
has a component $C_1$ of even order. Let $G_1 = G[V(C_1) \cup \{x\}]$ and
let $G_2 = G- V(C_1)$.  Note that both $G_1$ and $G_2$ are connected graphs of odd order. 
Furthermore the set of blocks of $G$ is exactly the union of the blocks in $G_1$ and $G_2$.
As $G \not\in {\cal B}$ (and therefore some block in $G$ is not a complete graph of odd order)
 we note that either $G_1 \not\in {\cal B}$ or $G_2 \not\in {\cal B}$ (or both).

Let $i \in \{1,2\}$ be defined such that $G_i \not\in {\cal B}$ and let $j=3-i$.
By induction hypothesis, there exists a proper $1$-perfect forest $F_i$ in $G_i$.
By Theorem~\ref{Thm1perfect} there also exists a (not necessarily proper) $1$-perfect forest, $F_j$, in $G_j$,
where $x$ is the vertex of even degree in $F_j$. We now note that
$F_i \cup F_j$ is a proper $1$-perfect forest of $G$, where the only vertex of even degree is the vertex of
even degree in $F_i$. Thus, we may assume that
$G$ has no cut-vertex $x$ such that some component in $G-x$ is of even order.

Now assume that $G$ contains a cut-vertex $x$. By the previous assumption, all components in $G-x$ are of odd order, and let $C_1$ be a component of $G-x$.
Let $G_1 = G[V(C_1) \cup \{x\}]$ and let $G_2 = G- V(C_1)$.  Note that both $G_1$ and $G_2$ are connected graphs of even order.
By Theorem~\ref{ThmKnown} there exists a $0$-perfect forest $F_1$ in $G_1$
and a $0$-perfect forest $F_2$ in $G_2$.
Note that $F_1 \cup F_2$ is now a proper $1$-perfect forest of $G$, where the only vertex of even degree is $x$.

\2

\noindent {\bf Case 2: $G$ is 2-connected.}

\2

\noindent{\bf Definition A}
{\em As $G \not\in {\cal B}$ and $G$ has odd
order, we note that $G$ is not a complete graph. Therefore there exists an induced path $p_1 p_2 p_3$ in $G$ (that is, 
$p_1 p_2, p_2 p_3 \in E(G)$ and $p_1 p_3 \not\in E(G)$). Let $C_1,C_2,\ldots,C_l$ be the components in $G-\{p_2,p_3\}$,
such that $p_1 \in C_1$.}

\2

Assume first that $|V(C_1)|$ is odd.
By Theorem~\ref{Thm1perfect} there exists a $1$-perfect forest $F_1$ in $C_1$, such that $p_1$ (see Definition A)
is the vertex of even degree in $F_1$. Let $G' = G-V(C_1)$ and note that $G'$ 
is connected and of even order. Therefore, by Theorem~\ref{ThmKnown}, there exists a $0$-perfect forest, $F'$, in $G'$.

If $d_{F_1}(p_1)>0$ then $F_1 \cup F'$ is a  proper $1$-perfect forest in $G$.
Now consider the case when $d_{F_1}(p_1)=0$. As $N(p_1) \cap V(G') = \{p_2\}$ (as $p_1p_2p_3$ is an induced path in $G$)
we note that adding the edge $p_1 p_2$ to 
$F_1 \cup F'$ gives us a proper $1$-perfect forest in $G$ (where $p_2$ is the only vertex of even degree).
Thus, in the rest of the proof, we may assume that $|V(C_1)|$ is even.

Let $G' = G[V(C_1) \cup \{p_2,p_3\}]$ and note that $G$ is connected and of even order. 
Furthermore $G'-\{p_2,p_3\}$ is connected (as $G'-\{p_2,p_3\} = C_1$). 
As $p_1$ is adjacent to $p_2$ but not to $p_3$ we note that $N_{G'}[p_2] \not= N_{G'}[p_3]$.
By Lemma~\ref{LemBlock} we must therefore have $G'-p_2 \not\in {\cal B}$ or $G'-p_3 \not\in {\cal B}$.
Let $i \in \{2,3\}$ be chosen such that $G'-p_i \not\in {\cal B}$, which by induction hypothesis
implies that there is a proper $1$-perfect forest $F_1$ in $G'-p_i$. 

As $G$ is $2$-connected, we note that $p_{5-i}$ is not a cut-vertex of $G$.
Therefore every component in $G-\{p_2,p_3\}$ has an edge to $p_i$, 
which implies that $G-V(F_1)$ is connected and of even order (as both $G$ and $F_1$ are of odd order). 
By Theorem~\ref{ThmKnown} there exists a $0$-perfect forest, $F_2$, in $G-V(F_1)$. 
Now $F_1 \cup F_2$ is a proper $1$-perfect forest in $G.$
This completes the proof.
\end{proof}

\2

A semiperfect forest $F$ of $G$ is called
a {\em 2-perfect forest} if exactly two vertices of $F$ have even degree. 

\begin{lemma}\label{no1PF}
If $G$ is a connected graph of odd order and $G \in {\cal B}$ then $G$ does not contain a proper $1$-perfect forest.
\end{lemma}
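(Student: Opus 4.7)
The plan is to prove the lemma by induction on the number of blocks of $G$. The key observation used throughout is that if $F$ is a proper $1$-perfect forest of $G$ and $B$ is any block of $G$, then every tree $T$ of $F$ satisfies $|V(T)\cap V(B)|\le 2$: indeed, since every block of $G \in {\cal B}$ is a complete graph and $T$ is induced in $G$, the set $V(T)\cap V(B)$ induces a clique in the tree $T$, and trees contain no triangle.

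For the base case $G\cong K_{2m+1}$ (a single block of odd order), the observation forces every tree of any proper $1$-perfect forest to have at most two vertices, while properness forces every tree to have at least two vertices; hence $F$ would be a perfect matching of $K_{2m+1}$, which is impossible since $|V(G)|$ is odd.

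For the inductive step, $G$ has at least two blocks, so the block-cut tree has a leaf block $B\cong K_{2m+1}$ with unique cut-vertex $x$, where $m\ge 1$. Suppose for contradiction that a proper $1$-perfect forest $F$ of $G$ exists. Since every $v\in V(B)\setminus\{x\}$ has all its $G$-neighbors in $V(B)$ and the tree of $F$ containing $v$ meets $B$ in at most two vertices, the observation combined with properness yields $d_F(v)=1$. Letting $k$ denote the number of edges of $F$ from $x$ into $V(B)\setminus\{x\}$ and $m'$ the number of edges of $F$ inside $V(B)\setminus\{x\}$, counting edge-endpoints gives $2m = 2m' + k$, so $k$ is even; on the other hand, all $k$ such edges lie in the single tree of $F$ containing $x$, which meets $B$ in at most two vertices, so $k\le 1$. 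Therefore $k=0$, $F\cap B$ is a perfect matching on $V(B)\setminus\{x\}$, and the trees of $F$ meeting $V(B)\setminus\{x\}$ are precisely these $m$ edges.

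Now set $G_1=G-(V(B)\setminus\{x\})$. Then $G_1$ is connected (since $B$ is a leaf block), lies in ${\cal B}$ (its blocks are exactly those of $G$ other than $B$), has odd order $|V(G)|-2m$, and strictly fewer blocks than $G$. Let $F_1$ be the subgraph of $F$ induced on $V(G_1)$. Because every tree of $F$ meeting $V(B)\setminus\{x\}$ is an edge entirely inside $V(B)\setminus\{x\}$, the trees of $F_1$ are precisely the trees of $F$ contained in $V(G_1)$; each is induced in $G_1$. Moreover, since $k=0$ and no vertex of $V(G_1)\setminus\{x\}$ has any $G$-neighbor in $V(B)\setminus\{x\}$, degrees are preserved: $d_{F_1}(w)=d_F(w)$ for every $w\in V(G_1)$. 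Since each vertex of $V(B)\setminus\{x\}$ has odd $F$-degree $1$, the unique even-degree vertex of $F$ must lie in $V(G_1)$; therefore $F_1$ is a proper $1$-perfect forest of $G_1$, contradicting the inductive hypothesis. The main obstacle is the structural analysis at the leaf block: combining the tree-intersection bound, properness, and the parity identity $2m=2m'+k$ to pin down $F\cap B$ completely; the reduction to $G_1$ and the induction are then routine.
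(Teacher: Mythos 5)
Your proof is correct. Both your argument and the paper's proceed by induction on the number of blocks, with the identical base case: in an odd complete graph every induced tree has at most two vertices, so a proper $1$-perfect forest would be a perfect matching of an odd-order graph. The inductive steps, however, are genuinely different. The paper picks an arbitrary cut-vertex $x$, restricts $F$ to $G_1=G[V(C_1)\cup\{x\}]$ for a component $C_1$ of $G-x$ in which $x$ has positive $F$-degree, and finishes with a one-line parity count: the restriction is a proper $i$-perfect forest with $i\in\{0,1,2\}$, and since $|V(G_1)|$ is odd the number of even-degree vertices must be odd, forcing $i=1$ and contradicting the induction hypothesis. You instead peel off a leaf block $B$ and pin down $F$ on $B$ completely (every non-cut vertex of $B$ has $F$-degree exactly $1$, the parity identity $2m=2m'+k$ combined with the tree-intersection bound forces $k=0$, so $F\cap B$ is a perfect matching of $B-x$ whose edges are entire trees of $F$), then apply the hypothesis to $G-(V(B)\setminus\{x\})$. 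Your route costs more work in the inductive step but yields an explicit structural description of how any candidate forest must look on a leaf block; the paper's route trades that information for brevity via the parity argument. Both are complete and correct.
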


\begin{proof}
Let $G$ be a connected graph of odd order and let $G \in {\cal B}$. 
We will prove that $G$ contains no proper $1$-perfect forest.
We will prove this using induction on the number of blocks in $G$.

If $G$ contains only one block then $G$ is a complete graph of odd order. In this case, any forest where all trees are induced, can
only contain trees of order 2 (and 1 if we allow isolated vertices). This implies that $G$ cannot contain a proper $1$-perfect
forest as $G$ has odd order. This completes the base case.

Now assume that $G$ contains at least two blocks, which implies that $G$ contains a cut-vertex, $x$.
Let $C_1,C_2,\ldots,C_l$ be the components in $G-x$ and let
$G_i = G[V(C_i) \cup \{x\}]$ for $i \in [l]$.
For the sake of contradiction suppose that $G$ contains a proper $1$-perfect forest $F$ 
and let $F_i$ denote $F$ restricted to $G_i$ for $i \in [l]$.
As $F$ is a proper $1$-perfect forest we note that $d_F(x) \geq 1$. 
Without loss of generality, assume that $d_{F_1}(x) \geq 1$. 
This implies that $F_1$ is a proper $i$-forest in $G_1$ where $i \in \{0,1,2\}$. 
However as $|V(G_1)|$ is odd (as $G \in {\cal B}$) this implies that 
$F_1$ is a proper $1$-perfect forest in $G_1$. 
This is a contradiction to $G_1 \in {\cal B}$ (as $G \in {\cal B}$).
\end{proof}

\2

\noindent{\bf Acknowledgements} We are thankful to Siddharth Gupta for discussions of {\sc Perfect Forest Above Perfect Matching}. 
Gutin's research was supported by the Leverhulme Trust under grant number RPG-2018-161. Yeo's research was supported by  the Danish Research Fundation under grant number DFF 7014-00037B.

\2


\begin{thebibliography}{10}


\bibitem{Bang+}
J{\o}rgen Bang{-}Jensen, Eduard Eiben, Gregory~Z. Gutin, Magnus
  Wahlstr{\"{o}}m, and Anders Yeo.
\newblock Component order connectivity in directed graphs.
\newblock In Yixin Cao and Marcin Pilipczuk, editors, {\em 15th International
  Symposium on Parameterized and Exact Computation, {IPEC} 2020, December
  14-18, 2020, Hong Kong, China (Virtual Conference)}, volume 180 of {\em
  LIPIcs}, pages 2:1--2:16. Schloss Dagstuhl - Leibniz-Zentrum f{\"{u}}r
  Informatik, 2020.

\bibitem{DB91}
Daniel Bienstock.
\newblock On the complexity of testing for odd holes and induced odd paths.
\newblock {\em Discrete Mathematics}, 90(1):85--92, 1991.


\bibitem{CaroLZ}
Yair Caro, Josef Lauri, and Christina Zarb.
\newblock Two short proofs of the perfect forest theorem.
\newblock {\em Theory and Applications of Graphs}, 4(1), 2017.
\newblock article 4.

\bibitem{CJM}
Robert Crowston, Mark Jones, and Matthias Mnich.
\newblock Max-cut parameterized above the {E}dwards-{E}rd{\H{o}}s bound.
\newblock {\em Algorithmica}, 72(3):734--757, 2015.

\bibitem{Cygan+}
M.~Cygan, F.V. Fomin, L.~Kowalik, D.~Lokshtanov, D.~Marx, M.~Pilipczuk,
  M.~Pilipczuk, and S.~Saurabh.
\newblock {\em Parameterized Algorithms}.
\newblock Springer, 2015.

\bibitem{Diestel}
Reinhard Diestel.
\newblock {\em Graph Theory, 4th Edition}, volume 173 of {\em Graduate texts in
  mathematics}.
\newblock Springer, 2012.

\bibitem{downey2013}
R.G. Downey and M.R. Fellows.
\newblock {\em Fundamentals of Parameterized Complexity}.
\newblock Springer, 2013.

\bibitem{FlumG06}
J{\"{o}}rg Flum and Martin Grohe.
\newblock {\em Parameterized Complexity Theory}.
\newblock Texts in Theoretical Computer Science. An {EATCS} Series. Springer,
  2006.

\bibitem{GY2021perfect}
Gregory Gutin and Anders Yeo.
\newblock Perfect forests in graphs and their extensions.
\newblock In Filippo Bonchi and Simon~J. Puglisi, editors, {\em Proceedings of
  the 46th International Symposium on Mathematical Foundations of Computer
  Science (MFCS 2021)}, to appear. {LIPIcs}, 2021.


\bibitem{Gut}
Gregory~Z. Gutin.
\newblock Note on perfect forests.
\newblock {\em J. Graph Theory}, 82(3):233--235, 2016.

\bibitem{GY}
Gregory~Z. Gutin and Anders Yeo.
\newblock Constraint satisfaction problems parameterized above or below tight
  bounds: {A} survey.
\newblock In Hans~L. Bodlaender, Rod Downey, Fedor~V. Fomin, and D{\'{a}}niel
  Marx, editors, {\em The Multivariate Algorithmic Revolution and Beyond -
  Essays Dedicated to Michael R. Fellows on the Occasion of His 60th Birthday},
  volume 7370 of {\em Lecture Notes in Computer Science}, pages 257--286.
  Springer, 2012.

\bibitem{GutYeo}
Gregory~Z. Gutin and Anders Yeo.
\newblock Note on perfect forests in digraphs.
\newblock {\em J. Graph Theory}, 85(2):372--377, 2017.

\bibitem{GY2021lower}
Gregory~Z. Gutin and Anders Yeo.
\newblock Lower bounds for maximum weighted cut.
\newblock {\em CoRR}, abs/2104.05536, 2021.


\bibitem{HopTar}
John~E. Hopcroft and Robert~Endre Tarjan.
\newblock Efficient algorithms for graph manipulation {[H]} (algorithm 447).
\newblock {\em Communications of {ACM}}, 16(6):372--378, 1973.

\bibitem{LokshtanovNRRS14}
Daniel Lokshtanov, N.~S. Narayanaswamy, Venkatesh Raman, M.~S. Ramanujan, and
  Saket Saurabh.
\newblock Faster parameterized algorithms using linear programming.
\newblock {\em {ACM} Transactions on Algorithms}, 11(2):15:1--15:31, 2014.

\bibitem{LSSZ}
Daniel Lokshtanov, Saket Saurabh, Roohani Sharma, and Meirav Zehavi.
\newblock Balanced judicious bipartition is fixed-parameter tractable.
\newblock {\em {SIAM} J. Discrete Mathematics}, 33(4):1878--1911, 2019.

\bibitem{LovPla}
L{\'a}szl{\'o} Lov{\'a}sz and Michael~D. Plummer.
\newblock {\em Matching Theory}.
\newblock Akad{\'e}miai Kiad{\'o}, 1986.

\bibitem{Sch}
Thomas~J. Schaefer.
\newblock The complexity of satisfiability problems.
\newblock In Richard~J. Lipton, Walter~A. Burkhard, Walter~J. Savitch, Emily~P.
  Friedman, and Alfred~V. Aho, editors, {\em Proceedings of the 10th Annual
  {ACM} Symposium on Theory of Computing, May 1-3, 1978, San Diego, California,
  {USA}}, pages 216--226. {ACM}, 1978.

\bibitem{Sco2001}
Alex~D. Scott.
\newblock On induced subgraphs with all degrees odd.
\newblock {\em Graphs $\&$ Combinatorics}, 17(3):539--553, 2001.

\bibitem{SW}
Roded Sharan and Avi Wigderson.
\newblock A new \emph{NC}algorithm for perfect matching in bipartite cubic
  graphs.
\newblock In {\em Fourth Israel Symposium on Theory of Computing and Systems,
  {ISTCS} 1996, Jerusalem, Israel, June 10-12, 1996, Proceedings}, pages
  202--207. {IEEE} Computer Society, 1996.


\end{thebibliography}
\end{document}